\definecolor{trp}{rgb}{1,1,1}
\definecolor{red}{rgb}{1,0,.2}
\newtheorem{theorem}{Theorem}[section]
\theoremstyle{plain}
\newtheorem{claim}{Claim}
\newtheorem{conjecture}{Conjecture}
\newtheorem{lemma}[theorem]{Lemma}
\newtheorem{prop}[theorem]{Proposition}
\newtheorem{remark}[theorem]{Remark}
\numberwithin{equation}{section}
\newcommand{\ii}{\mathbf{i}}
\newcommand{\jj}{\mathbf{j}}
\begin{document}
\title[The Prime Grid]{The Prime Grid. Introducing a geometric representation of natural numbers}

\author{Istv\'an Kolossv\'ary}
\address{Istv\'an Kolossv\'ary, Boston University, Department of Biomedical Engineering, \newline Boston, MA 02215, U. S. A.;  \newline Budapest University of Technology and Economics, Department of Chemistry} \email{ikolossv@bu.edu}

\author{Istv\'an Kolossv\'ary}
\address{Istv\'an Kolossv\'ary, Budapest University of Technology and Economics,  Institute of Mathematics, Department of Stochastics;  \newline MTA Alfr\'ed R\'enyi Institute of Mathematics} \email{istvanko@math.bme.hu}

\thanks{ 2010 {\em Mathematics Subject Classification.} Primary 11N64 11K45 Secondary 11N56 11Y70 11-04 37B10 90C56
\\ \indent
{\em Key words and phrases.} prime grid, prime gaps, distribution of primes, prime number theorem, Polignac's conjecture, Markov shift, differential evolution}

\begin{abstract}
In this report we present an off-the-number-line representation of the positive integers by expressing each integer by its unique prime signature as a grid point of an infinite dimensional space indexed by the prime numbers, which we term the prime grid. In this space we consider a zigzag line, termed the number trail that starts at the origin (representing 1) and travels through every single grid point in the order of the increasing sequence of the natural numbers.

Using the infinity norm we define an arithmetic function $L_\infty (N)$ tabulating the total length of the zigzag up to the integer $N$. We show that $L_\infty(N)$ grows linearly in $N$. Based on computing $L_\infty$ up to $N=10^{12}$ we conjecture the exact rate of growth, which we substantiate analytically by constructing a series of Markov shifts that give gradually better approximations.

Our other interest is looking at the prime gaps along $L_\infty$, i.e. the differences $L_\infty (p_{i+1} )-L_\infty (p_i )$ between every two consecutive primes. After some preliminary observations we extend this analysis to second order differences (difference of differences) as well. Based on our computations up to $N=10^{12}$ we conclude that the distribution of prime numbers along the number trail shows a considerably richer structure compared to their distribution on the traditional number line. We also formulate modified versions of the prime number theorem and Polignac's conjecture.
\end{abstract}
\date{\today}

\maketitle

\thispagestyle{empty}

\section{Introducing the prime grid}

Natural numbers represent our concept of counting things and they are deeply rooted in the conscious and unconscious. Natural numbers, naturally feel familiar, yet under closer scrutiny they present some of the most difficult mathematical problems that have engaged the greatest minds over millennia. Without a doubt prime numbers and their distribution among the natural numbers have puzzled and fascinated most experts and non-experts.

Traditionally natural numbers are plotted as evenly spaced points on the number line. This representation gives a clear idea of their magnitude but not much else. Another possible method of visualizing the natural numbers is called the Ulam spiral \cite{UlamSpiral}, where the numbers are placed in increasing order on the grid points of $\mathbb{Z}^2$ starting with 1 at $(0,0)$ and spiraling outwards, so $2\to(1,0),\, 3\to(1,1),\, 4\to(0,1),\, 5\to(-1,1),\, 6\to(-1,0)$ and so on. Interestingly, prime numbers tend to line up along certain diagonal lines corresponding to specific quadratic polynomials, which could be explained by a conjecture of Hardy and Littlewood \cite{Hardy1923} (if proven to be true).

In this report we present a different, off-the-number-line spatial representation of positive integers whereby the numbers are laid out on a multidimensional grid--assembled from powers of the prime numbers. In number theory, the fundamental theorem of arithmetic states that every natural number $N>1$ is either a prime number itself or it is the product of a unique set of prime numbers, each raised to some fixed power $\geq1$. Because the factorization formula is unique, it suggests a coordinate system of sorts, to represent $N$ in a space ``spanned'' by prime numbers.

\subsection{The prime grid}
The factorization theorem states that every natural number $N$ can be uniquely identified with an infinite sequence $\ii^{N}=\ii=(i_1,i_2,\ldots)$ of non-negative integers, called the prime signature of $N$, so that
\begin{equation*}
N= p_1^{i_1}p_2^{i_2}\ldots p_k^{i_k}\ldots \, ,
\end{equation*}
where $\{p_k\}_{k=1}^\infty$ denote the prime numbers in ascending order. The number $1$ is represented by the sequence $\mathbf{0}=(0,0,\ldots)$. This gives a bijection between the positive integers and the space of all infinite sequences that consist only of zeros except for a finite number of positive integers. We can think of $\mathbf{0}$ as the origin of a coordinate system whose axis are indexed by the prime numbers and each natural number is a grid point on this \textit{prime grid}. It is a vector space, where the addition of two signatures $\ii+\jj$ (corresponding to $N$ and $M$) and also the multiplication by a scalar $n\in\mathbb{N}$ is done coordinate-wise. The former represents multiplication $NM$, while the latter raises $N$ to the $n$th power. In further notation we will freely interchange $N$ with its signature $\ii$.

\begin{remark}
Notice that we could canonically extend our grid to represent all positive rational numbers by allowing negative integers in a signature as well. In this case subtraction of two signatures could also be defined coordinate-wise and would represent division among the natural numbers. We consider only non-negative signatures henceforth.
\end{remark}

In this vector space we can consider the $\ell_1$ and the $\ell_\infty$ norms, i.e.
\begin{equation*}
\|N\|_1=\|\ii\|_1=|i_1|+|i_2|+\ldots \;\text{ and }\; \|N\|_\infty=\|\ii\|_\infty=\max \{|i_1|,|i_2|,\ldots\}.
\end{equation*}
In other words, $\|N\|_1$ counts the total number of prime factors (with multiplicity) of $N$. This is usually referred to as the arithmetic function $\Omega(N)$ in number theory. Further relations to other arithmetic functions include
\begin{equation}\label{eq:N1=Omega}
\omega(N)\leq \Omega(N)=\|N\|_1 \leq \sigma_0(N)-1,
\end{equation}
where $\omega(N)$ counts the number of distinct prime factors of $N$ and $\sigma_0(N)=\prod(i_k+1)$ is the divisor function counting the number of divisors of $N$ ($\sigma_0(N)$ also counts 1 as a divisor, which does not appear in the signature of $N$).

With either of these norms we can define a metric in the usual way. For instance, the $\ell_\infty$ metric for two natural numbers $N$ and $M$ with signatures $\ii$ and $\jj$, also referred to as their Chebyshev distance is equal to
\begin{equation*}
d_{\infty}(N,M) = d_{\infty}(\ii,\jj) = \|\ii-\jj\|_\infty= \max\{|i_1-j_1|,|i_2-j_2|,\ldots\}.
\end{equation*}
 For example, the numbers at unit distance from the origin in the $\ell_1$ metric are the prime numbers, while in the $\ell_\infty$ metric they include exclusively all the square-free numbers. In general, the $k$th power-free numbers are enclosed in the sphere of radius $k-1$ (centered at $\mathbf 0$) using the $\ell_\infty$ metric. We refer to the numbers $M$ for which $\|M\|_\infty=k$ as the Chebyshev contour at distance $k$. Furthermore, it is well-known that the number of $k$-free numbers up to $M$ follow an asymptotic of $M/\zeta(k)+ O(\sqrt[k]{M})$, where $\zeta(k)$ is the Riemann zeta function and $O$ is the usual big-$O$ notation. Hence, the Chebyshev contours follow the asymptotic
\begin{equation}\label{eq:vsz_Ninfty=k}
\lim_{N\to\infty}\frac{|\{M\leq N \text{ such that } \|M\|_\infty =k \}|}{N} = \frac{1}{\zeta(k+1)} - \frac{1}{\zeta(k)}.
\end{equation}
For large $N$ we interpret this as the probability that a uniformly chosen $M\leq N$ has $\ell_\infty$ norm equal to $k$, see further below \eqref{eq:def_q_k}.
 
\subsection{The number trail: an analog of the number line}

We focus on a particular object on the prime grid, a zigzag path that starts at $\mathbf 0$ and crisscrosses through every single grid point on the prime grid in the order of the increasing sequence of the natural numbers. We term this zigzag path the \textit{number trail} and define an arithmetic function $L(N)$ tabulating the total length of the number trail up to $N$ using the chosen metric. That is
\begin{align}
L_\infty (N) &=\sum_{K=1}^{N-1}d_\infty(K\!+\!1,K) = \sum_{K=1}^{N-1} \max\{\|K\!+\!1\|_\infty,\|K\|_\infty\},  \label{def:Linfty} \\
L_1 (N) &= \sum_{K=1}^{N-1}d_1(K\!+\!1,K) = \sum_{K=1}^{N-1} \|K\!+\!1\|_1 + \|K\|_1 = \|N\|_1+2\sum_{K=2}^{N-1}\|K\|_1, \label{def:L1}
\end{align}
where the second equality in each line holds, since $K+1$ and $K$ are always coprime. $L(1)=0$ by definition. We think of it as an analog of the traditional number line, where $L(N)=\sum_{K=0}^{N-1}|(K+1)-K|=N$. It is obvious that $1\leq \|N\|_\infty\leq \|N\|_1$ for every $N$, hence
\begin{equation*}
N < L_\infty(N) < L_1(n) \;\text{ for every } N\geq 4.
\end{equation*}
Asymptotically though $L_\infty$ and $L_1$ behave differently.

\begin{conjecture}\label{conj:Linfty/Nexists}
The limit
\begin{equation}\label{eq:ConjectureLinfty/N}
\lim_{N\to\infty}\frac{L_\infty(N)}{N} =: c_0 
\end{equation}
exists and based on our computations $c_0\approx 2.2883695\ldots .$
\end{conjecture}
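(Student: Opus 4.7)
The plan is to view $L_\infty(N)/N$ as the Ces\`{a}ro average of the non-negative integers $\max\{\|K\|_\infty,\|K+1\|_\infty\}$ and evaluate it level-set by level-set. By the layer-cake identity $\max\{a,b\}=\sum_{j\ge 1}\mathbf{1}\{\max\{a,b\}\ge j\}$ for non-negative integers,
\begin{equation*}
L_\infty(N)=\sum_{j=1}^\infty M_j(N),\qquad M_j(N):=\#\{1\le K\le N-1\colon K \text{ or } K+1 \text{ is not $j$-free}\}.
\end{equation*}
Passing to complements gives $M_j(N)=(N-1)-P_j(N)$, where $P_j(N)$ counts those $K\le N-1$ for which both $K$ and $K+1$ are $j$-free. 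This reduces the problem to computing the asymptotic density of consecutive $j$-free pairs.

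This density is delivered by Mirsky's theorem (1949) on consecutive $k$-free integers: a standard inclusion--exclusion sieve, using that for every prime $p$ the two forbidden residues $0$ and $-1\pmod{p^j}$ are distinct, yields
\begin{equation*}
P_j(N)=\alpha_j N+o(N),\qquad \alpha_j:=\prod_{p}\bigl(1-2/p^j\bigr).
\end{equation*}
For $j=1$ this gives $\alpha_1=0$ (because $\sum_p 1/p$ diverges), in agreement with the trivial $P_1(N)=0$ for $N\ge 2$. Hence $M_j(N)/N\to 1-\alpha_j$ for each fixed $j$. To exchange this limit with the sum over $j$, I would apply dominated convergence with the uniform majorant $M_j(N)/N\le 2\sum_{p}p^{-j}\le 2\bigl(\zeta(j)-1\bigr)$, obtained by a direct union bound on the divisibility conditions $p^j\mid K$ or $p^j\mid K+1$. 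Since $\zeta(j)-1=O(2^{-j})$, the majorant is summable, and
\begin{equation*}
\frac{L_\infty(N)}{N}\longrightarrow \sum_{j=1}^\infty(1-\alpha_j)=:c_0.
\end{equation*}
A direct numerical evaluation of the first few terms $(1+0.677+0.323+0.150+0.071+0.034+\cdots)$ matches the conjectured value $c_0\approx 2.2884$.

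The only non-trivial ingredient is Mirsky's theorem. Its proof is a standard but delicate sieve computation; the main technical work (were one to prove it from scratch) is to control the error when the sieving modulus $p^j$ is comparable to $N^{1/2}$ and to verify that these errors remain $o(N)$ after summing the inclusion--exclusion contributions over square-free subsets of primes. The remaining step -- numerically summing the factor densities $\alpha_j$ over $j$ -- is routine and converges geometrically, so no further analytic difficulty arises once the density statement is in hand.
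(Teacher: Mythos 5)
Your argument is correct, and it goes well beyond what the paper does: the paper leaves this statement as a genuine conjecture, supporting it only with the two-sided bounds of Lemma \ref{lemma:Linftyislinear} and their refinements, with Markov-shift models whose expectations approximate $c_0$ from below, and with direct computation up to $10^{12}$. Your route --- the layer-cake identity $L_\infty(N)=\sum_{j\ge1}M_j(N)$, the reduction of $M_j(N)$ to the count of consecutive pairs of $j$-free integers, Mirsky's theorem for the density $\alpha_j=\prod_p(1-2p^{-j})$ of such pairs, and dominated convergence with the majorant $M_j(N)/N\le 2(\zeta(j)-1)=O(2^{-j})$ for $j\ge 2$ (with $M_1(N)/N\le 1$ trivially) --- is a complete proof and yields the closed form
\begin{equation*}
c_0=\sum_{j=1}^{\infty}\Bigl(1-\prod_{p}\bigl(1-2p^{-j}\bigr)\Bigr),
\end{equation*}
which evaluates numerically to $2.28837\ldots$, in agreement with the paper's empirical value to all reported digits. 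It is also strictly larger than the paper's i.i.d.\ estimate $\sum_{j}\bigl(1-\zeta(j)^{-2}\bigr)=2.22101\ldots$ for a transparent reason: $\prod_p(1-p^{-j})^{2}=\prod_p(1-2p^{-j}+p^{-2j})>\prod_p(1-2p^{-j})$, reflecting that $K$ and $K+1$ are never simultaneously divisible by $p^j$, so their $\ell_\infty$ norms are negatively associated and the maximum is larger on average than under independence --- exactly the effect the paper tries to capture by eliminating forbidden words. Two minor points: for $j=1$ the Euler factor at $p=2$ is already $1-2/2=0$, so $\alpha_1=0$ without invoking the divergence of $\sum_p 1/p$; and since Mirsky's theorem carries a power-saving error term, your argument can be made quantitative, giving $L_\infty(N)=c_0N+o(N)$ with an explicit rate, which in turn would make the Modified Prime Number Theorem (Conjecture \ref{conj:ModPNT}) an unconditional consequence of the classical one.
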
 
We provide both numerical and analytical evidence that corroborate the conjecture in Section \ref{sec:Linfty}. It is not difficult to show that there exists constants $c$ and $C$ such that $c N<L_\infty(N)<C N$. Furthermore, we construct a series of probabilistic models, namely Markov shifts, with which we can give progressively better lower bounds for $\liminf L_\infty(N)/N$. Appendix \ref{sec:Ccode} contains the programming code used to generate the random sequences. We also formulate a modified version of the prime number theorem, which is supported by our computations.

\begin{prop}\label{prop:BoundsOnL1}
Let $\gamma$ denote the Euler--Mascheroni constant. Then
\begin{equation}\label{eq:BoundL1}
2N\Big(\log\log N - \log\frac{\pi^2}{6}\Big) \leq\; L_1(N)+\|N\|_1 \;\leq 2N\log N - 4(1-\gamma)N+ O(\sqrt N).
\end{equation}
\end{prop}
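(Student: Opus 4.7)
The plan is to first unfold definition~\eqref{def:L1}: using $\|1\|_1 = 0$ we rewrite
\[L_1(N) + \|N\|_1 \;=\; 2\sum_{K=1}^N \|K\|_1 \;=\; 2\sum_{K=1}^N \Omega(K) \;=:\; 2S(N),\]
so the proposition reduces to two-sided estimates on the classical summatory function $S(N)$.

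For the upper bound I would apply~\eqref{eq:N1=Omega} pointwise, namely $\Omega(K)\le\sigma_0(K)-1$, so that $S(N) \le \sum_{K=1}^N \sigma_0(K) - N$. Dirichlet's hyperbola formula $\sum_{K=1}^N\sigma_0(K) = N\log N + (2\gamma-1)N + O(\sqrt N)$ then gives $S(N) \le N\log N - 2(1-\gamma)N + O(\sqrt N)$, and doubling yields the right-hand side of~\eqref{eq:BoundL1}.

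The lower bound is where the real work lies. The starting point is the identity $S(N) = \sum_{p^a\le N}\lfloor N/p^a\rfloor$, which I would split as
\[S(N) \;\ge\; N\sum_{p\le N}\frac{1}{p} \;-\; \pi(N) \;+\; R(N), \qquad R(N) := \sum_{\substack{p^a\le N\\ a\ge 2}}\lfloor N/p^a\rfloor.\]
The key input is a Mertens-type lower bound for $\sum_{p\le N}1/p$ obtained cleanly from the Euler product: since every $n\le N$ has all its prime factors at most $N$, we have the sandwich $\log N < H_N \le \prod_{p\le N}(1-1/p)^{-1}$, with $H_N$ the $N$th harmonic number. Taking logarithms and using $-\log(1-1/p) = 1/p + \sum_{k\ge 2}\frac{1}{kp^k}$, the identification of the constant $\log(\pi^2/6)$ comes from the elementary inequality $\log(1+x)\le x$: applied at $x=1/p$ it rearranges to $-\log(1-1/p) - 1/p \le -\log(1-1/p^2)$, and summing over all primes together with the Euler product $\zeta(2) = \prod_p(1-1/p^2)^{-1}$ yields
\[\sum_p\sum_{k\ge 2}\frac{1}{kp^k} \;\le\; \sum_p\bigl(-\log(1-1/p^2)\bigr) \;=\; \log\zeta(2) \;=\; \log(\pi^2/6).\]
Combining produces $\sum_{p\le N}1/p \ge \log\log N - \log(\pi^2/6)$.

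The remaining (and main) obstacle is to absorb the shortfall $-\pi(N)$. The plan is to exploit the higher-prime-power reserve $R(N)$: the prime $p=2$ alone contributes $v_2(N!) - \lfloor N/2\rfloor = N - s_2(N) - \lfloor N/2\rfloor = \Theta(N)$ to $R(N)$, whereas $\pi(N) = o(N)$ by Chebyshev's bound, so $R(N) \ge \pi(N)$ for every $N$ beyond a small explicit threshold; the finitely many exceptional small $N$ can be verified by direct computation (indeed, for such $N$ the left-hand side of~\eqref{eq:BoundL1} is in fact negative, making the inequality trivial). Doubling then delivers the stated left-hand side.
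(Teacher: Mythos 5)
Your proof is correct, and its skeleton is the same as the paper's: reduce to $2\sum_{K\le N}\Omega(K)$, get the upper bound from $\Omega(K)\le\sigma_0(K)-1$ plus Dirichlet's asymptotic for $\sum\sigma_0(K)$, and get the lower bound from the reciprocal-prime sum with the Mertens-type constant $\log(\pi^2/6)$. The genuine difference is in how the lower bound is assembled, and your version is the more careful one. The paper drops to $\omega$ via \eqref{eq:N1=Omega}, writes $\sum_{K\le N}\omega(K)=\sum_{p\le N}\lfloor N/p\rfloor$, and then silently replaces $\lfloor N/p\rfloor$ by $N/p$; since $\lfloor N/p\rfloor\le N/p$, that substitution points the wrong way for a lower bound and the resulting $-\pi(N)$ error is never addressed. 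You instead keep $\Omega$, so the identity $S(N)=\sum_{p^a\le N}\lfloor N/p^a\rfloor$ gives you the prime-power reserve $R(N)$, which is $\Theta(N)$ already from $p=2$ (Legendre's formula) and hence eventually dominates $\pi(N)=o(N)$, absorbing the floor-function loss. You also derive the bound $\sum_{p\le N}1/p\ge\log\log N-\log(\pi^2/6)$ from the Euler product and $\log(1+x)\le x$, which the paper only asserts. The one loose end is the finite range where $R(N)\ge\pi(N)$ is not yet guaranteed: your remark that the left-hand side of \eqref{eq:BoundL1} is negative covers only $N\le 5$ (it is positive from $N=6$ onward, while e.g. $R(6)=1<3=\pi(6)$), so the direct verification must extend up to the explicit threshold, roughly $N\le 40$ with crude Chebyshev-type bounds. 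That is a trivial finite check and does not affect the validity of the argument.
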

\begin{proof}
From \eqref{eq:N1=Omega} and \eqref{def:L1} it follows that
\begin{equation*}
2\sum_{K=2}^N \omega(K) \leq L_1(N)+\|N\|_1\leq 2\Big( \sum_{K=2}^{N} \sigma_0(K)-(N-1)\Big).
\end{equation*}
Dirichlet showed that the leading behavior of the divisor summatory function $\sum \sigma_0(K)$ is $N\log N +(2\gamma-1)N+O(\sqrt N)$. This gives the upper bound in \eqref{eq:BoundL1}. Let $\chi(\cdot)$ denote the indicator function. As for the lower bound
$$
\sum_{K=2}^N \omega(K) = \sum_{p,q} \chi(pq\leq N \text{ and } p \text{ prime}) = \sum_{p\leq N} \sum_{q\leq N/p} 1 = \sum_{p\leq N} \frac{N}{p}.
$$
The sum of the reciprocals of the prime numbers can be bounded from below by $\log\log N - \log \pi^2/6$, which concludes the proof.
\end{proof}

\begin{remark}
From now on we will only work with $L_\infty$. There are several reasons why we think using the  $\ell_\infty$ metric is the natural choice.
\begin{itemize}
\item In $\ell_1$ only the primes are separated naturally. The spheres in $\ell_\infty$ however, correspond to the $k$-free numbers. This allows to interpret \eqref{eq:vsz_Ninfty=k} as the probability that for a uniformly chosen $1\leq M\leq N$ ($N$ large) $\|M\|_\infty=k$. This is key in constructing our probabilistic models in Section \ref{sec:Linfty}. 
\item It immediately follows from \eqref{def:L1} that $L_1(p)$ is odd for any prime $p$. This is not the case with $L_\infty$, see Table \ref{table:SmallValuesOfDs}.
\item Conjecture \ref{conj:Linfty/Nexists} and Proposition \ref{prop:BoundsOnL1} show that $L_1$ grows faster than $L_\infty$, which limits the extent of its computability.
\end{itemize}
Henceforth we will usually suppress $\infty$ from the subscript in our notation.
\end{remark}

\subsection{Prime gaps on the number trail}
Our original motivation came from the distribution of prime numbers, for which the most widely used representation uses the prime gap functions
\begin{equation*}
D_k^1=p_{k+1}-p_k \;\text{ and }\; D_k^2=D_{k+1}^1-D_k^1.
\end{equation*}
See Figure \ref{fig:Histogram_D1_D2} in Section \ref{sec:D1andD2} for a histogram of $D^1$ and $D^2$. $D^1$ has been extensively studied, however there are many unanswered conjectures as well. Here we highlight just some of the most recent results. Roughly speaking, the prime number theorem states that on average $D^1_k$ scales as $\log p_k$. However, the fluctuations from the average can be great. Namely, there are subsequences along which the ratio $D^1_k/\log p_k$ can get arbitrarily large \cite{West31} and close to zero \cite{PintzI} as well, i.e.
\begin{equation*}
\limsup_{k\to\infty} \frac{D^1_k}{\log p_k} = \infty \;\text{ and }\; \liminf_{k\to\infty} \frac{D^1_k}{\log p_k} = 0.
\end{equation*}
The latter, for small gaps between primes was improved by the same authors, Goldston--Pintz--Y{\i}ld{\i}r{\i}m in \cite{PintzII2010}. In fact, Banks--Freiberg--Maynard \cite{Banks2016} showed that at least $12.5\%$ of all non-negative real numbers were limit points of $D^1_k/\log p_k$. Further in this direction, a big breakthrough was Zhang's work  \cite{Zhang2014BoundedGaps}, who proved that $D^1_k$ was bounded from above by a constant for infinitely many $k$. The original constant of $7\times 10^7$ has been greatly reduced by work of the Polymath Project and Maynard \cite{Maynard2015}. The conjectured value of the constant is 2, which is the twin prime conjecture.

In the other direction, the best result to date for long gaps between primes is due to Ford--Green--Konyagin--Maynard--Tao \cite{Taoetal2014arXiv}. They showed that for sufficiently large $N$
$$ \max_{p_k\leq N} D^1_k \gg \frac{\log N \log\log N \log\log\log\log N}{\log\log\log N}.$$

In this report we propose a different approach to study the distribution of prime numbers by looking at the prime gaps instead along the number trail on the prime grid. Analogous to the prime gap functions $D^1$ and $D^2$, we define the differences $\mathcal D^1$ and $\mathcal D^2$ along the number trail with $L_\infty$ to be
\begin{equation}\label{def:PrimegapOnNumbertrail}
\mathcal D_k^1:= L_\infty(p_{k+1})-L_\infty(p_k)  \;\text{ and }\; \mathcal D_k^2:= \mathcal D_{k+1}^1 -\mathcal D_k^1.
\end{equation}
For reference, we give a list of the values of $D_k^1, D_k^2, \mathcal D_k^1$ and $\mathcal D_k^2$ for the first few values of $k$. Figures \ref{fig:Histogram_DD1} and \ref{fig:Histogram_DD2} in Section \ref{sec:D1andD2} show detailed histograms of $\mathcal D^1$ and $\mathcal D^2$.
\begin{table}[H]
\caption{Values of $D_k^1, D_k^2, \mathcal D_k^1$ and $\mathcal D_k^2$ for the first few values of $k$.} \label{table:SmallValuesOfDs}
\begin{center}
\begin{tabular}{lrc|rrrr||lrc|rrrr@{}}
	\toprule
	$k$ & $p_k$ & $L_\infty(p_k)$ & $D_k^1$ & $D_k^2$ & $\mathcal D_k^1$ & $\mathcal D_k^2$ & $k$ & $p_k$ & $L_\infty(p_k)$ & $D_k^1$ & $D_k^2$ & $\mathcal D_k^1$ & $\mathcal D_k^2$ \\ \midrule
\rowcolor[HTML]{E8E8E8}
	1 & 2 & 1 & 1 & 1 & 1 & 3 &   6 & 13 & 21 & 4 & -2 & 10 & -6 \\
	2 & 3 & 2 & 2 & 0 & 4 & -2 &  7 & 17 & 31 & 2 & 2 & 4 & 2 \\
\rowcolor[HTML]{E8E8E8}
	3 & 5 & 6 & 2 & 2 & 2 & 7 &   8 & 19 & 35 & 4 & 2 & 6 & 10 \\
	4 & 7 & 8 & 4 & -2 & 9 & -5 & 9 & 23 & 41 & 6 & -4 & 16 & -14 \\
\rowcolor[HTML]{E8E8E8}
	5 & 11 & 17 & 2 & 2 & 4 & 6 & 10& 29 & 57 & 2 & 4 & 2 & 14 \\
	\bottomrule
\end{tabular}
\end{center}
\end{table}
The most apparent difference between $D^1, D^2$ and $\mathcal D^1, \mathcal D^2$ is that the former only take even values (except for the gap between 2 and 3), whereas the latter can also take odd values. The first instance where $\mathcal D^1=7$ is for primes 43 and 47. In Section~\ref{sec:D1andD2} we provide further observations regarding $\mathcal D^1, \mathcal D^2$ and based on our computations highlight differences in the intricate structure compared to $D^1, D^2$ including a modified version of Polignac's conjecture.

In Appendix \ref{sec:Hist_data} raw numerical data of the histograms of $\mathcal D^1$ and $\mathcal D^2$ are presented in Table \ref{tab:Hist_DD1_1e2-12} and Table \ref{tab:Hist_DD2_1e2-12}, respectively for $N\leq10^{2}$ to $N\leq10^{12}$. Size limitations allow for showing $\mathcal D^1$ differences up to 80 and $\mathcal D^2$ differences between -60 and 60. Note, however, that we list the Sagemath Python code in Appendix \ref{sec:Code} that can be used to generate the entire data set including both numerical and graphical representations of the histograms. A detailed explanation of the code is also provided.

At this point we wish to emphasize that the main objective of this paper is merely to draw the attention of those interested towards this alternative look at the natural numbers. Most of our conclusions require a leap of faith relying mainly on empirical data and simulations. Nevertheless, we consider the mathematical questions posed by the paper interesting in their own right.  

\section{Asymptotic growth of the number trail $L_\infty(N)$}\label{sec:Linfty}

Recall $L_\infty$ from \eqref{def:Linfty}. We begin by showing that $L_\infty$ grows linearly.

\begin{lemma}\label{lemma:Linftyislinear}
Let $c:= \sum_{k=1}^{\infty}(1-1/\zeta(k))=1.7052\ldots$. Then
\begin{equation*}
c \leq \liminf_{N\to\infty}\frac{L_\infty(N)}{N} \leq \limsup_{N\to\infty}\frac{L_\infty(N)}{N} \leq 2c. 
\end{equation*}	
\end{lemma}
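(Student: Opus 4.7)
The plan is to sandwich $L_\infty(N)$ between two multiples of the simpler partial sum $S(N) := \sum_{K=1}^{N-1}\|K\|_\infty$ and then evaluate $S(N)/N \to c$ directly from the $k$-free density recorded in \eqref{eq:vsz_Ninfty=k}. For non-negative reals $a,b$ one has $a \leq \max\{a,b\} \leq a+b$, so summing the chain $\|K\|_\infty \leq \max\{\|K+1\|_\infty,\|K\|_\infty\} \leq \|K\|_\infty + \|K+1\|_\infty$ over $K=1,\dots,N-1$ and reindexing gives
$$S(N) \;\leq\; L_\infty(N) \;\leq\; 2\,S(N) + \|N\|_\infty.$$
Since $\|N\|_\infty \leq \log_2 N = o(N)$, the full lemma reduces to showing $S(N)/N \to c$.

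For the latter, I would apply the layer-cake identity $\|K\|_\infty = \sum_{k\geq 1}\mathbf{1}_{\{\|K\|_\infty \geq k\}}$ and swap the order of summation:
$$S(N) \;=\; \sum_{k=1}^{\lfloor \log_2 N\rfloor}\bigl((N-1) - Q_k(N-1)\bigr),$$
where $Q_k(M)$ is the number of $k$-free integers $\leq M$ (the truncation at $\lfloor \log_2 N\rfloor$ is legitimate because no integer below $N$ can have infinity norm larger than that). The classical inclusion-exclusion bound $Q_k(M)=M/\zeta(k)+O(M^{1/k})$ for $k\geq 2$ turns each summand into $(N-1)(1-1/\zeta(k))+O(N^{1/k})$, and the $k=1$ layer fits the same formula under the convention $1/\zeta(1)=0$ (only $K=1$ is $1$-free, an $O(1)$ discrepancy).

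Then I would separate main term and error. The main term $(N-1)\sum_{k=1}^{\lfloor\log_2 N\rfloor}(1-1/\zeta(k))$ equals $(N-1)\,c + O(1)$ because $1-1/\zeta(k) = O(2^{-k})$, so the tail beyond $\lfloor\log_2 N\rfloor$ contributes $O(1/N)$ (this geometric decay also guarantees convergence of $c$ in the first place). The error $\sum_{k=2}^{\lfloor\log_2 N\rfloor}O(N^{1/k})$ is dominated by its $k=2$ term $O(\sqrt N)$ plus at most $O(N^{1/3}\log N)$ from the rest, giving $S(N)=cN+O(\sqrt N\log N)$ and hence $S(N)/N\to c$. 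The only genuinely delicate point I foresee is ensuring that the implicit constant in the $Q_k$ asymptotic is uniform in $k$ up to $\log_2 N$, but this is routine: the floor-function error is bounded by the number of squarefree $d$ with $d^k\leq N$, which is at most $N^{1/k}$, and the tail of the Möbius series contributes at most $N^{1/k}/(k-1)$, so the total error is uniformly $O(N^{1/k})$.
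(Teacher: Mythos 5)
Your proof is correct and follows essentially the same route as the paper: sandwich $L_\infty(N)$ between $\sum_K\|K\|_\infty$ and twice that sum, then evaluate $\sum_K\|K\|_\infty/N \to c$ via the $k$-free counting function and an interchange of summation (your layer-cake identity is exactly the paper's "interchanging the order of summation" step). The only difference is that you are more explicit about the error terms and their uniformity in $k$, which the paper leaves as an asserted $O(\sqrt{N}\log N)$.
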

\begin{proof}
It readily follows from \eqref{def:Linfty} that
\begin{equation*}
\sum_{K=2}^N \|K\|_{\infty}\leq\; L_\infty(N) \;\leq 2\sum_{K=2}^N \|K\|_{\infty}.
\end{equation*}
The sum $\sum \|K\|_\infty$ can be well approximated using \eqref{eq:vsz_Ninfty=k}
$$
\sum_{K=2}^N \|K\|_{\infty} = \sum_{k=1}^\infty k\cdot|\{K\leq N:\; \|K\|_\infty=k\}| = \sum_{k=1}^\infty kN\Big(\frac{1}{\zeta(k+1)}-\frac{1}{\zeta(k)}\Big)
$$
with an error term of the order $O(\sqrt{N} \log N )$. Interchanging the order of summation and using that $\lim\limits_{k\to\infty}\zeta(k)=1$, we get
$$
\sum_{k=1}^\infty k\Big(\frac{1}{\zeta(k+1)}-\frac{1}{\zeta(k)}\Big) = \sum_{k=1}^{\infty} \sum_{i=k}^{\infty} \Big(\frac{1}{\zeta(i+1)}-\frac{1}{\zeta(i)}\Big) =  \sum_{k=1}^{\infty} 1-\frac{1}{\zeta(k)},
$$
which is a convergent series. Dividing by $N$ completes the proof.
\end{proof}

A closer inspection shows that $L_\infty(N)$ depends on how the $(1/\zeta(k+1)-1/\zeta(k))N$ numbers with $\ell_\infty$ norm equal to $k$ are distributed among $\|2\|_\infty,\|3\|_\infty,\ldots,\|N\|_\infty$. Without going into details
\begin{itemize}
\item If they are bunched together as much as possible, then the lower bound for the $\liminf$ can be improved to
$$ \sum_{k=1}^{\infty}\Big(k+\frac{1}{2^{k+1}-1}\Big) \Big(\frac{1}{\zeta(k+1)}-\frac{1}{\zeta(k)}\Big) = 1.9476\ldots \,.$$
\item If they are spread apart as evenly as possible, then the upper bound for the $\limsup$ can be improved to 
$$ \frac32 \sum_{k=1}^{\infty}\Big(1-\frac{1}{\zeta(k)}\Big) = 2.5578\ldots\,.$$
\end{itemize}
These constants are still quite far from the conjectured value of $c_0=2.2883\ldots\,$. In order to get a better understanding of the deterministic sequence
$$\boldsymbol \Omega = \|2\|_\infty,\|3\|_\infty,\|4\|_\infty,\ldots$$
we consider it as a pseudorandom sequence, in the sense that it behaves like a random sequence generated by some probabilistic model. The idea is very much the same how Cram\'er modeled the prime numbers \cite{Cramér1936}. The goal is to construct a series of models which generate random infinite sequences with prescribed properties that the deterministic one satisfies. Then a typical realization will in some sense well-approximate the deterministic sequence.

\subsection{The sequence $\|2\|_\infty,\|3\|_\infty,\ldots$}
We consider $\boldsymbol{\Omega}=\|2\|_\infty,\|3\|_\infty,\ldots$ as an infinite sequence of letters from the alphabet $\mathcal{A}=\mathbb N$. Let $\underline{\omega}=\omega_1\ldots\omega_n$ denote a word of length $|\underline{\omega}|=n$ from $\mathcal{A}$. There are two key factors which greatly determine the rate of growth of $L_\infty$.
\begin{enumerate}
\item The relative frequency $q_k$ with which a given value $k\in\mathbb N$ appears in $\boldsymbol{\Omega}$. According to \eqref{eq:vsz_Ninfty=k}
\begin{equation}\label{eq:def_q_k}
q_k=\frac{1}{\zeta(k+1)}-\frac{1}{\zeta(k)}.
\end{equation}
\item There's a set $\mathcal F$ of forbidden words which never appear in $\boldsymbol{\Omega}$. Observe that any subsequence of consecutive characters of length $2^n$ must contain at least one element with $\|\cdot\|_\infty\geq n$ for any $n>1$. This defines 
\begin{equation}\label{eq:def_ForbiddenWords}
\mathcal F = \bigcup_{n=1}^\infty \mathcal{F}_n = \bigcup_{n=1}^\infty \{\underline{\omega}:\; |\underline{\omega}|=2^{n+1},\, \omega_i<n+1 \text{ for } 1\leq i\leq 2^{n+1}\}.
\end{equation} 
\end{enumerate}
The first one captures the density of each letter $k\in\mathbb N$, while the second one describes additional structure present in the sequence $\Omega$. 

We believe the words in \eqref{eq:def_ForbiddenWords} are the only forbidden ones, however we do not have a complete proof. Consider an arbitrary $\underline{\omega}=\omega_1\ldots\omega_n\notin \mathcal F$. Choose any $n$ distinct primes $\mathbf p= (p_1,\ldots,p_n)$ and look at the system of congruences
\begin{equation}\label{eq:CongSystem}
x+i-1 \equiv 0\mod p_i^{w_i}, \;\;i=1,\ldots,n.
\end{equation}
Since the $p_i$ are coprime, the Chinese remainder theorem implies that there exists a unique $x$ between $1$ and $M=\prod_{i=1}^n p_i^{w_i}$ which satisfies \eqref{eq:CongSystem}. Then of course $\|x+i-1\|_\infty\geq \omega_i$. If all are equalities, then we found $\underline{\omega}$ in $\boldsymbol{\Omega}$. If not, then we can try with $x+kM$ for some positive integer $k$, since all such numbers satisfy \eqref{eq:CongSystem}. It is unclear whether such a $k$ always exists. It need not exist for all choices of $\mathbf p$. For example, $\underline \omega=111$ never appears in $\boldsymbol{\Omega}$ with $\mathbf p=(2,3,5)$, but when $\mathbf p=(5,2,7)$ it does with $x=5$. For illustration we give examples of $\underline{\omega}$ in $\boldsymbol{\Omega}$ in Table \ref{table:FindWordInOmega}. 

\begin{table}[H]
	\caption{Selected non-trivial $\underline{\omega}$ with their place of appearance in $\boldsymbol{\Omega}$} \label{table:FindWordInOmega}
	\begin{center}
		\begin{tabular}{l|rrr@{}}
			\toprule
			$\underline \omega$ & $x+kM$ & $k$ & $\mathbf p$ \\ \midrule
			\rowcolor[HTML]{E8E8E8}
			$17,30$ & $27\,699\,975\,238\,617\,792\,512$ & $1$ & $(2,3)$ \\
			$1,15,3,14$ & $18\,890\,469\,353\,465\,057\,219\,498$ & $7$ & $(2,3,5,7)$  \\
			\rowcolor[HTML]{E8E8E8}
			$1, 2, 2, 1, 3, 5, 2, 1$ & $93\,377\,215\,627\,231\,323$ & $16$ & $(3, 2, 5, 7, 11, 13, 17, 19)$ \\
			\bottomrule
		\end{tabular}
	\end{center}
\end{table}

\subsection{Random model of $\|2\|_\infty,\|3\|_\infty,\ldots$} 
Let $X_1,X_2,X_3,\ldots$ be a random sequence generated by an ergodic stationary Markov-chain with $X_i$ taking values in $\mathbb N$. Refer to \cite{DurrettBook2010} for basics on Markov-chains. Assume that the chain's stationary distribution $\mathbb P (X_i=k)=q_k$ satisfies \eqref{eq:def_q_k}. Furthermore, there is a set $\mathcal E\subseteq\mathcal F$ of \textit{eliminated words}, which never appear in any sequence generated by the chain, but any finite $\underline{\omega}\notin\mathcal{E}$ appears in $X_1,X_2,X_3,\ldots$ with probability one. 

Motivated by \eqref{def:Linfty},  consider the variables $Y_i:= \max\{X_{i+1},X_i\}$ which, due to stationarity are identically distributed. Then the sum $\sum Y_i$ can be interpreted as a random version of $L_\infty$. The ergodic theorem implies that the ergodic average
\begin{equation*}
\frac 1 N \sum_{i=1}^N Y_i\to \mathbb E Y, \;\text{ as } N\to\infty
\end{equation*}
almost surely and in $L^1$ (for basic facts in ergodic theory, we refer to \cite{WaltersErgodBook}). Whenever necessary, we indicate the dependence of $X$ and $Y$ on $\mathcal E$ by writing $X(\mathcal E)$ and $Y(\mathcal E)$. 
\begin{prop}\label{prop:moreEliminated}
For a sequence of ergodic Markov-chains whose stationary distribution  satisfies \eqref{eq:def_q_k} and $\mathcal E_1\subsetneq\mathcal E_2\subsetneq\ldots\subsetneq\mathcal E_n\subsetneq\ldots\subseteq\mathcal{F} $ the sequence $\mathbb E Y(\mathcal E_n)$ converges as $n\to\infty$. In particular, if $\cup_n\mathcal E_n=\mathcal F$ then denote the limit by $\mathbb E Y(\mathcal F)$.
\end{prop}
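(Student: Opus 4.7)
The plan is to show that $\mathbb{E}Y(\mathcal{E}_n)$ is uniformly bounded and monotone non-decreasing in $n$, and then invoke the monotone convergence theorem. Uniform boundedness is immediate from the fact that every chain in the sequence has the same one-dimensional marginal $q$. Indeed, $\max(a,b)\leq a+b$ gives $Y_i \leq X_i+X_{i+1}$ and hence $\mathbb{E}Y(\mathcal{E}_n)\leq 2\mathbb{E}X = 2\sum_{k\geq 1}k\,q_k$, where the series converges because $\zeta(k)-1\sim 2^{-k}$ forces $q_k\sim 2^{-(k+1)}$. The same marginal also supplies the lower bound $\mathbb{E}Y(\mathcal{E}_n)\geq \mathbb{E}X$, so the whole sequence lies in a fixed finite interval.

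For monotonicity the natural handle is the tail-sum identity
\begin{equation*}
\mathbb{E}Y(\mathcal{E}_n) \;=\; \sum_{k\geq 1} \bigl[\,1 - \mathbb{P}_n(X_1<k,\,X_2<k)\,\bigr].
\end{equation*}
Since the stationary law is the same for every chain, the single-time marginal $\mathbb{P}_n(X_i<k)=\sum_{j<k}q_j=1/\zeta(k)$ is \emph{independent} of $n$, and the entire $n$-dependence is squeezed into the two-point joint probability. The task therefore reduces to the term-by-term inequality
\begin{equation*}
\mathbb{P}_{n+1}(X_1<k,\,X_2<k) \;\leq\; \mathbb{P}_{n}(X_1<k,\,X_2<k) \qquad (k\geq 1).
\end{equation*}
The intuition is transparent: every word in $\mathcal F_m$ is a block of $2^{m+1}$ consecutive letters all strictly below $m+1$, so removing further such blocks when passing from $\mathcal E_n$ to $\mathcal E_{n+1}$ strictly thins out long runs of small letters; because the marginals are frozen, the mass stripped off these runs must be redistributed onto configurations that meet at least one letter $\geq m+1$, which is exactly what lowers the two-point probability of two consecutive small letters.

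I expect the main obstacle to be turning the last heuristic into a proof, since for fixed marginal $q$ and fixed $\mathcal E_n$ there is a large family of compatible ergodic chains and a careless pairing of chains across $n$ can destroy the inequality (one can hand-construct toy examples where $\mathbb{E}Y$ moves the wrong way). The way around it is to build the sequence canonically: realise the $n$-th chain as the (unique) maximum-entropy Markov shift on the state space of legal words of length equal to the longest block of $\mathcal E_n$, so that the $(n{+}1)$-st chain is obtained from the $n$-th by conditioning on the extra forbidden patterns and projecting. In this compatible set-up the two-point inequality follows from a straightforward coupling (equivalently, from conditioning monotonicity for the joint law). Granted the monotonicity, bounded monotone convergence yields the existence of $\lim_n\mathbb{E}Y(\mathcal{E}_n)$; a brief check that any two exhausting sequences $\mathcal E_n,\mathcal E_n'\uparrow\mathcal F$ yield the same limit (by interleaving them into a common refinement) justifies the notation $\mathbb{E}Y(\mathcal F)$.
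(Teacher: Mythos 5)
Your overall skeleton --- bounded above plus monotone non-decreasing, hence convergent --- is exactly the paper's, and your boundedness half coincides with the paper's verbatim: both reduce to $\mathbb E Y(\mathcal E_n)\leq 2\,\mathbb E X=2\sum_k k q_k=2c$, the convergent series from Lemma~\ref{lemma:Linftyislinear}. Where you diverge is the monotonicity step. The paper disposes of it in one line: if $\mathcal E_n\subsetneq\mathcal E_m$, then any word of $\mathcal E_m\setminus\mathcal E_n$ appears with probability one in the $\mathcal E_n$-chain, and since $\max(x,y)$ is increasing in both arguments, $\mathbb E Y(\mathcal E_n)<\mathbb E Y(\mathcal E_m)$. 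You instead rewrite $\mathbb E Y(\mathcal E_n)=\sum_{k\geq 1}\bigl[1-\mathbb P_n(X_1<k,X_2<k)\bigr]$, observe that the marginals $\mathbb P_n(X_i<k)=1/\zeta(k)$ are frozen, and reduce everything to a two-point correlation inequality. This reformulation is correct and is a sharper diagnostic tool than the paper's: it isolates precisely what has to be compared between consecutive chains.

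To your credit, you also notice what the paper glosses over: for a \emph{given} $\mathcal E_n$ there are many ergodic chains compatible with the marginal $\mathbf q$, and the proposition as literally stated quantifies over an arbitrary sequence of such chains, for which the two-point inequality (equivalently, the paper's ``straightforward'' claim) can fail without some canonical pairing. Your proposed repair --- fix the maximum-entropy compatible shift at each level and couple consecutive levels --- is a reasonable way to make the statement true, but as written it is still a sketch: conditioning the level-$n$ shift on the additional forbidden patterns does not obviously preserve the marginal $\mathbf q$ (the paper itself admits in Subsection~\ref{subsec:genConstr} that even the \emph{existence} of a compatible $P$ with stationary marginal $\mathbf q$ is unclear), and the ``conditioning monotonicity'' you invoke for the coupling is asserted rather than proved. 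So the one genuinely delicate step --- that eliminating more words, at fixed marginals, really does push $\mathbb E Y$ up --- remains open in your write-up, exactly as it does in the paper's; the difference is that you name the difficulty and the paper does not. Your closing remark that any two exhausting sequences give the same limit is a worthwhile addition the paper omits.
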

\begin{proof}
For two Markov chains in the sequence for which $\mathcal E_n\subsetneq\mathcal E_m$ we have
\begin{equation*}
\mathbb E Y(\mathcal E_n)<\mathbb E Y(\mathcal E_m).
\end{equation*}
This is straightforward, since with probability one, any $\underline \omega \in \mathcal E_m\setminus \mathcal E_n$ will appear in $X_1(\mathcal{E}_n),X_2(\mathcal{E}_n),X_3(\mathcal{E}_n),\ldots$ and the function $\max(x,y)$ is strictly increasing in both variables. Hence, $\mathbb E Y(\mathcal E_n)$ is a strictly increasing sequence of real numbers.

Furthermore, by mimicking the proof of Lemma \ref{lemma:Linftyislinear} we can obtain a deterministic upper bound for $\lim \mathbb E Y(\mathcal E_n)$, and therefore the limit exists and is finite. Indeed, observe that $\sum Y_i\leq 2 \sum X_i$ and due to \eqref{eq:def_q_k} we have $\mathbb E X_i=c$ from Lemma \ref{lemma:Linftyislinear}.
\end{proof}

If $\boldsymbol{\Omega}$ behaves like a pseudorandom sequence, then the main question is what is the relationship between $\mathbb E Y(\mathcal{F})$ and the conjectured value of $c_0$?

Alternatively, we give another construction in which we use infinitely many Markov-chains to generate a single random sequence, which will almost surely converge to the same $\mathbb E Y(\mathcal{F})$. Observe that in $\boldsymbol{\Omega}$ up to $\|2^{n+1}\|_\infty$ the restricted words from $\mathcal F$ are exactly $\mathcal F_1\cup\ldots\cup\mathcal F_n$. The idea is to use the Markov-chain with $\mathcal E_n=\mathcal F_1\cup\ldots\cup\mathcal F_n$ for elements in the random sequence with index between $2^n+1$ and $2^{n+1}$. 
\begin{prop}
Consider the random sequence
\begin{equation*}
X_1,X_2,\ldots = X_1^{(1)},\ldots,X_{2^2}^{(1)},X_1^{(2)},\ldots, X_{2^2}^{(2)},X_1^{(3)},\ldots,X_{2^3}^{(3)},\ldots, X_1^{(n)},\ldots,X_{2^n}^{(n)},\ldots
\end{equation*}
where the block $X_{2^n+1},\ldots,X_{2^{n+1}}=X_1^{(n)},\ldots,X_{2^n}^{(n)}$ is generated by the ergodic Markov-chain with $\mathcal E_n=\mathcal F_1\cup\ldots\cup\mathcal F_n$ started from the stationary distribution given by \eqref{eq:def_q_k}. As before, $Y_i:= \max\{X_i,X_{i+1}\}$. Then almost surely
\begin{equation}
\lim_{N\to\infty}\frac 1 N \sum_{i=1}^{N} Y_i= \mathbb E Y(\mathcal F).
\end{equation} 
\end{prop}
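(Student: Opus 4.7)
The plan is to combine two facts that are already essentially in hand: within each block $n$ the empirical average of $Y_i$ concentrates around $\mathbb{E} Y(\mathcal{E}_n)$ by standard concentration for ergodic Markov chains, and Proposition~\ref{prop:moreEliminated} gives $\mathbb{E} Y(\mathcal{E}_n) \to \mathbb{E} Y(\mathcal{F})$. Because the block lengths grow geometrically, a Toeplitz-type averaging collapses these two convergences into the single almost sure limit.

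\textbf{Step 1 (within-block concentration).} Let $Y_i^{(n)}$ denote the $Y$'s obtained from the $n$-th block and set $M_n := 2^{-n}\sum_{i=1}^{2^n} Y_i^{(n)}$. Since $Y_i \leq X_i + X_{i+1}$ and $q_k$ decays like $2^{-k}$, the $Y_i^{(n)}$ have uniformly bounded second moment; combined with the geometric mixing of each stationary ergodic chain this gives $\operatorname{Var}\bigl(\sum_{i=1}^{2^n} Y_i^{(n)}\bigr)=O(2^n)$ with constants independent of $n$. Chebyshev's inequality with threshold $\varepsilon_n = n^{-1/4}$ then yields
\[
\mathbb{P}\bigl(|M_n - \mathbb{E} Y(\mathcal{E}_n)| > \varepsilon_n\bigr) \leq C\sqrt{n}\, 2^{-n},
\]
which is summable in $n$. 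Since the blocks use independent chains, the Borel--Cantelli lemma combined with Proposition~\ref{prop:moreEliminated} gives $M_n \to \mathbb{E} Y(\mathcal{F})$ almost surely.

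\textbf{Step 2 (averaging along powers of two).} Modulo the $O(n)$ ``boundary'' terms $Y_{2^{k+1}}$ that mix the last letter of block $k$ with the first letter of block $k+1$, one has
\[
\frac{1}{2^{n+1}} \sum_{i=1}^{2^{n+1}} Y_i \;=\; \sum_{k=1}^{n} w_{n,k}\, M_k + O\bigl(n\, 2^{-n-1}\bigr), \qquad w_{n,k} := 2^{k-n-1}.
\]
The weights satisfy $w_{n,k}\to 0$ for each fixed $k$ and $\sum_{k=1}^n w_{n,k} = 1-2^{-n} \to 1$, so Toeplitz's lemma applied to the convergent sequence $M_k$ yields the limit $\mathbb{E} Y(\mathcal{F})$ along the subsequence $N = 2^{n+1}$.

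\textbf{Step 3 (interpolation and main obstacle).} For general $N \in [2^{n+1}, 2^{n+2})$ the partial sum gathers a piece of block $n+1$ of length $j := N-2^{n+1}\leq 2^{n+1}$, which can contribute a non-negligible fraction of the total. I would handle this uniformly in $j$ via Doob's maximal inequality applied to the martingale decomposition of the ergodic sum in block $n+1$:
\[
\mathbb{P}\!\left( \max_{1\leq j \leq 2^{n+1}} \left| \sum_{i=1}^j Y_i^{(n+1)} - j\,\mathbb{E} Y(\mathcal{E}_{n+1}) \right| > 2^{n+1}\varepsilon_n \right) \leq C\sqrt n\, 2^{-n-1},
\]
again summable. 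Borel--Cantelli produces an almost sure $o(1)$ bound on the partial-block average; combined with Step~2 and $N\asymp 2^{n+1}$, this yields the claim for every $N$. The principal obstacle is securing the variance and maximal bounds in Steps~1 and~3 with constants uniform in $n$: a priori the mixing rate of the chain with eliminated set $\mathcal{E}_n$ could deteriorate as the forbidden list grows, and one needs either an explicit construction of the chains guaranteeing uniformly geometric mixing, or a coupling/comparison argument exploiting that the stationary second moment $\sum_k k^2 q_k$ is controlled independently of $n$.
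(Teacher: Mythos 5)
Your argument has the same skeleton as the paper's proof: concentrate the block averages $M_n=2^{-n}\sum_{i=1}^{2^n}Y_i^{(n)}$ around $\mathbb E Y(\mathcal E_n)$ by Chebyshev, sum the failure probabilities and apply Borel--Cantelli, then pass to the full average by a Toeplitz/Ces\`aro argument combined with Proposition \ref{prop:moreEliminated}. Two comments. First, the ``principal obstacle'' you flag is not really an obstacle: you do not need variance or mixing constants uniform in $n$, because Chebyshev already produces a factor $2^{-n}$ that absorbs any polynomially growing constant. The paper takes the threshold $n^{-1}$ and bounds $\mathbf{Var}\big(\sum_{i=1}^{2^n}Y_i^{(n)}\big)$ by asserting that the Markov property forces $\mathbf{Cov}(Y_i^{(n)},Y_m^{(n)})=0$ for $|i-m|>2$ (a claim that itself deserves a second look, since an ergodic chain generally has geometrically decaying rather than vanishing correlations) and by using the crude deterministic bound $X_i^{(n)}\le n+1$ on the truncated alphabet, so that the variance and the two surviving covariances are $O(n^2)$; this yields $\mathbb P\big(|M_n-\mathbb E Y(\mathcal E_n)|>n^{-1}\big)=O(n^4 2^{-n})$, summable with no appeal to a spectral gap. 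Second, your Step 3 addresses a point the paper passes over in silence: its closing sandwich
\begin{equation*}
\sum_{j=1}^{\log N}2^j\big(\mathbb EZ^{(j)}-cj^{-1}\big)\;\le\;\sum_{i=1}^{N}Y_i\;\le\;\sum_{j=1}^{\log N}2^j\big(\mathbb EZ^{(j)}+cj^{-1}\big)
\end{equation*}
is really only a statement along the subsequence $N=2^{n+1}$, and since the last, partial block can carry a constant fraction of the mass, some control of its partial sums --- your maximal-inequality step, or simply rerunning the Chebyshev bound on $\sum_{i=1}^{j}Y_i^{(n+1)}$ at dyadic values of $j$ --- is genuinely needed to obtain the limit for all $N$. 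So your write-up is essentially the paper's proof with the interpolation made explicit; just replace the worry about uniformly geometric mixing by the cheap polynomial moment bounds above.
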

\begin{proof}
Let $Z^{(j)}=2^{-j}\sum_{i=1}^{2^j}Y_i^{(j)}$. Then $\mathbb E Z^{(j)}=\mathbb E Y(\mathcal E_j)$. Introduce the event $A(j):= |Z^{(j)}-\mathbb E Z^{(j)}|>j^{-1}$. From Chebyshev's inequality it follows that
\begin{equation*}
\begin{aligned}
\mathbb P(A(j)) &\leq \frac{j^2}{2^{2j}}\mathbf{Var} \Big(\sum_{i=1}^{2^j}Y_i^{(j)}\Big) \\ &= \frac{j^2}{2^{2j}} \left( 2^j\mathbf{Var}(Y_1^{(j)})+2\sum_{i=1}^{2^j}\mathbf{Cov}(Y_i^{(j)},Y_{i+1}^{(j)})+\mathbf{Cov}(Y_i^{(j)},Y_{i+2}^{(j)})\right)\\
&= \frac{j^2}{2^{j}}  \left( \mathbf{Var}(Y_1^{(j)})+2(\mathbf{Cov}(Y_1^{(j)},Y_2^{(j)})+\mathbf{Cov}(Y_1^{(j)},Y_{3}^{(j)}))\right),
\end{aligned}
\end{equation*}
where in the first equality we used that $Y_i^{(j)}$ are identically distributed and due to the Markov property $\mathbf{Cov}(Y_n^{(j)},Y_m^{(j)})=0$ if $|n-m|>2$. Moreover, a very rough upper bound for $\mathbf{Var}(Y_1^{(j)})$ and both covariances is $O(j^2)$ (since $X_i^{(j)}\leq j+1$). Hence,
\begin{equation*}
\sum_{j=1}^{\infty} \mathbb P(A(j)) < \infty,
\end{equation*}
thus the Borel--Cantelli lemma implies that with probability one $|Z^{(j)}-\mathbb E Z^{(j)}|>j^{-1}$ only for a finite number of $j$. So we can pick a $c$ such that $|Z^{(j)}-\mathbb E Z^{(j)}|<cj^{-1}$ for all $j$. Finally,
\begin{equation*}
\sum_{j=1}^{\log N} 2^j (\mathbb EZ^{(j)} - cj^{-1})\leq \sum_{i=1}^{N} Y_i = \sum_{j=1}^{\log N} 2^j Z^{(j)} \leq  \sum_{j=1}^{\log N} 2^j (\mathbb EZ^{(j)} + cj^{-1}).
\end{equation*}
Dividing by $N$, the error terms tend to $0$ as $N\to \infty$. The main term is a Cesaro mean with limit $\mathbb E Y(\mathcal F)$, since $\lim\mathbb E Y(\mathcal E_N)=\mathbb E Y(\mathcal F)$ by Proposition \ref{prop:moreEliminated}.
\end{proof}

\subsection{Examples of eliminated words}\label{subsec:examples}

Here we show how to construct the Markov-chain for a few concrete examples of $\mathcal{E}$. We also calculate the corresponding $\mathbb E Y(\mathcal{E})$ and compare it to $c_0$.

\subsubsection*{Eliminated words $\mathcal E=\emptyset$} Then the Markov-chain corresponds to the sequence of independent and identically distributed (i.i.d.) discrete random variables $X_1,X_2,\ldots$  taking the values $1,2,\ldots$ according to the probability vector $\mathbf q= (q_1,q_2,\ldots)$. It's an easy exercise to determine the distribution of $Y_i$ and get that
\begin{equation*}
\frac 1 N \mathbb E \sum_{i=1}^N Y_i = \mathbb E Y_i = \sum_{k=1}^{\infty} k\Big(\frac{1}{\zeta^2(k+1)}-\frac{1}{\zeta^2(k)}\Big) = 2.22101\ldots .
\end{equation*}
This is much closer to the conjectured value of $c_0$ in \eqref{eq:ConjectureLinfty/N} than any of the purely analytic results in this section. It's a lower bound for $c_0$, this shows that the density alone does not determine the asymptotic growth of $L_\infty$, but the forbidden words indeed play a role as well.

\subsubsection*{Eliminated words $\mathcal E=\{'1111'\}$} Define the Markov-chain on the state space $\mathbb N \cup \{'11','111'\} $ with transition matrix
$$P=\begin{array}{@{}r|cccccc}
& '11' & '111' & 1 & 2 & 3 & \ldots\\ \hline
'11' & 0 & p_1 & 0 & p_2 & p_3 & \ldots \\ 
'111' & 0 & 0 & 0 & \frac{p_2}{1-p_1} & \frac{p_3}{1-p_1} & \ldots \\
1 & p_1 & 0 & 0 & p_2 & p_3 & \ldots \\ 
2 & 0 & 0 & p_1 & p_2 & p_3 & \ldots \\ 
3 & 0 & 0 & p_1 & p_2 & p_3 & \ldots \\ 
\vdots & \vdots& \vdots & \vdots& \vdots& \vdots& \ddots
\end{array}$$
Whenever it is in state $'11'$ or $'111'$, only a $1$ appears in the sequence itself. After $'111'$ only $2,3,\ldots$ may come, which implies that $1111$ never appears in any sequence generated by the chain. By choosing $p_1,p_2,\ldots$ so that the stationary vector $\pi=\pi P$ satisfies $\pi(1)+\pi('11')+\pi('111')=q_1$ and $\pi(k)=q_k$ for $k\geq 2$, we also achieve that \eqref{eq:def_q_k} holds. $\pi=\pi P$ yields $\pi('111')=p_1^3(1-p_1)/(1-p_1^4)$ from which
\begin{equation*}
\begin{aligned}
q_1 &= \frac{(p_1+p_1^2+p_1^3)(1-p_1)}{1-p_1^4} \;\Longrightarrow\; p_1=0.704518\ldots \\
q_k &= p_k\Big(1+\frac{p_1}{1-p_1}\pi('111')\Big) \;\Longrightarrow\; p_k=(1-p_1^4)q_k.
\end{aligned}
\end{equation*}
From here we can determine the distribution of $Y_i$
\begin{equation*}
\begin{aligned}
\mathbb P(Y_i=1) &= (\pi(1)+\pi('11'))p_1, \\
\mathbb P(Y_i=k) &= q_k\sum_{\ell=1}^{k}p_\ell + p_k\sum_{\ell=1}^{k-1}q_\ell +\pi('111')p_k\frac{p_1}{1-p_1} \;\;\text{ for } k\geq2
\end{aligned}
\end{equation*}
which numerically gives $\mathbb E Y_i=2.26767\ldots$. As expected from the proof of Proposition \ref{prop:moreEliminated} it is greater than in the i.i.d. case.
\begin{remark}
Notice that it is enough to work with a finite state space. The ratio $q_k/p_k$ is constant for $k\geq 2$, due to the fact that the corresponding columns in $P$ have the same structure. It is convenient to consider all these letters as a single character, say $\ast$, and to have a single row/column in $P$ corresponding to $\ast$. 
\end{remark}
\subsubsection*{Eliminated words $\mathcal E=\{'1111', '11121112'\}$} The finite state space is $\{1,2,\ast\} \cup \{'11','111','1112','11121','111211','1112111'\}$ with transition matrix

$$P=\begin{array}{@{}r|ccccccccc}
1 & 0 & p_1 & 0 & 0 & 0 & 0 & 0 & p_2 & p_\ast \\
'11' & 0 & 0 & p_1 & 0 & 0 & 0 & 0 & p_2 & p_\ast  \\ 
'111' & 0 & 0 & 0 & \frac{p_2}{1-p_1} & 0 & 0 & 0 & 0 & \frac{p_\ast}{1-p_1} \\
'1112' & 0 & 0 & 0 & 0 & p_1 & 0 & 0 & p_2 & p_\ast \\ 
'11121' & 0 & 0 & 0 & 0 & 0 & p_1 & 0 & p_2 & p_\ast \\ 
'111211' & 0 & 0 & 0 & 0 & 0 & 0 &  p_1 & p_2 & p_\ast \\ 
'1112111' & 0 & 0 & 0 & 0 & 0 & 0 & 0 & 0 & \frac{p_\ast}{1-p_1-p_2} \\
2 & p_1 & 0 & 0 & 0 & 0 & 0 & 0 & p_2 & p_\ast \\
\ast & p_1 & 0 & 0 & 0 & 0 & 0 & 0 & p_2 & p_\ast
\end{array}$$
From the solution of $\pi=\pi P$ we get that
\begin{equation*}
\begin{aligned}
\pi('111') &= \frac{(1-p_1)p_1^3}{1-p_1^4+(p_1^3+p_1^4+p_1^5+p_1^6)p_2}, \\
\pi('1112111') &= \frac{p_1^6p_2}{1-p_1^4+(p_1^3+p_1^4+p_1^5+p_1^6)p_2}.
\end{aligned}
\end{equation*}
For \eqref{eq:def_q_k} to hold, we need to set
\begin{equation*}
\begin{aligned}
q_1 &= \pi(1)+\pi('11')+\pi('111')+\pi('11121')+\pi('111211')+\pi('1112111'), \\
q_2 &= \pi(2) + \pi('1112') \;\text{ and }\; q_k=\pi(k) \,\text{ for } k\geq3,
\end{aligned}
\end{equation*}
which determine the numerical values of $p_1, p_2$ and $p_k$
\begin{equation*}
\begin{aligned}
p_1 &= 0.7045175\ldots,\;  p_2=0.179836\ldots \;\text{ and }\\
p_k &= q_k \Big(1+\frac{p_1}{1-p_1}\pi('111')+\frac{p_1+p_2}{1-p_1-p_2}\pi('1112111')\Big)^{-1}.
\end{aligned}
\end{equation*}
The distribution of $Y_i$ is
\begin{equation*}
\begin{aligned}
\mathbb P(Y_i=1) &= (\pi(1)+\pi('11')+\pi('11121')+\pi('111211'))p_1, \\
\mathbb P(Y_i=2) &= (\pi(1)+\pi('11')+\pi('1112')+\pi('11121')+\pi('111211'))p_2 \\
&+ \pi('111')\frac{p_2}{1-p_1} + \pi('1112')p_1 + \pi(2)(p_1+p_2), \\
\mathbb P(Y_i=k) &= q_k\sum_{\ell=1}^{k}p_\ell + p_k\sum_{\ell=1}^{k-1}q_\ell +\pi('111')\frac{p_1p_k}{1-p_1} + \pi('1112111')\frac{(p_1+p_2)p_k}{1-p_1-p_2},
\end{aligned}
\end{equation*}
which numerically gives $\mathbb E Y_i=2.270017\ldots$. Already with just two eliminated words the calculations are quite involved. In order to eliminate substantially more forbidden words we did extensive computer simulations, which we present in Subsection~\ref{subsec:CompSimu}. But first, we formulate the construction more generally in Subsection~\ref{subsec:genConstr}.

\subsection{General construction}\label{subsec:genConstr}
Basically, we constructed a vertex shift of finite type to eliminate the words in $\mathcal E$ (for basic definitions in symbolic dynamics we refer to \cite{LindMarcusSymDyn}) and then picked a compatible Markov shift, whose stationary distribution generate the letters with density that we prescribed beforehand.

More precisely, fix an arbitrary finite set of eliminated words $\mathcal E$. Let $\mathcal A$ denote the set of letters which make up the words in $\mathcal E$. The alphabet will be $\mathcal A^\ast=\mathcal A \cup \{\ast\}$. $\mathcal E$ defines a shift of finite type $\mathcal X$ on the alphabet $\mathcal A^\ast$. Assume the longest word in $\mathcal E$ has length $n$. Let $\mathcal E'$ be the collection of all $n$ length words which contain an element from $\mathcal E$ as a subword. $\mathcal E'$  defines the same $n-1$-step shift as $\mathcal E$ does. Then $\mathcal X$ can be represented by a vertex shift on a graph with vertices  $\mathbf i=(i_1i_2\ldots i_n)\in (\mathcal A^\ast)^n\setminus \mathcal E'$ (the allowed words of length $n$) and adjacency matrix $A=(A_{\mathbf i\mathbf j})$, where
$$A_{\mathbf i\mathbf j} = 1, \text{ iff } i_2\ldots i_n=j_1\ldots j_{n-1},$$
otherwise $A_{\mathbf i\mathbf j} = 0$. If at the $m$th step the shift is in state $\mathbf i=(i_1i_2\ldots i_n)$, then set $X_m=i_n$. Hence, any sequence $X_1,X_2,\ldots$ generated by the vertex shift will not contain any eliminated word from $\mathcal E$.

Next, consider all possible Markov shifts compatible with the vertex shift, i.e. the collection of stochastic matrices $P=(P_{\mathbf i\mathbf j})$ such that $P_{\mathbf i\mathbf j}>0$ iff $A_{\mathbf i\mathbf j} = 1$. Note that all such $P$ are irreducible and aperiodic, hence ergodic. Choose $P$ for which the stationary distribution $\pi=\pi P$ satisfies
\begin{equation}\label{eq:piandq_k}
\Pi(k):=\sum_{\substack{\mathbf i\in (\mathcal A^\ast)^n\setminus \mathcal E' \\ i_n=k}}\pi(\mathbf i) = q_k \;\text{ for } k\in\mathcal A, \;\text{ and }\; \Pi(\ast):=\pi(\ast) = 1-\sum_{k\in\mathcal A} q_k.
\end{equation}
Then, with probability one, the relative frequency of the characters in any sequence generated by the stationary Markov-chain with initial probability vector $\pi$ and transition matrix $P$ will in addition satisfy \eqref{eq:def_q_k}. We are uncertain whether for any $\mathcal E$ such a $P$, whose stationary distribution $\pi$ satisfies \eqref{eq:piandq_k} always exists. We found no references in the literature and think that this by itself could be an interesting question. 

This is an inverse problem of sorts, since $P$ is the unknown and the $q_k$ are given. From \eqref{eq:piandq_k} and $\pi=\pi P$ the $q_k$ can be expressed as functions of the entries of $P$. We arrive at a system of polynomial equations, which we are uncertain whether it's always (numerically) solvable. This leads to questions in algebraic geometry, which is beyond the scope of this paper, but could be interesting in its own right.

\begin{remark}
The construction itself is not specific to the sequence  $\|2\|_\infty,\|3\|_\infty,\ldots$. All we used was \eqref{eq:def_q_k} and \eqref{eq:def_ForbiddenWords}. It can be adapted to an arbitrary infinite sequence $\mathbf x=x_1x_2\ldots$ coming from a (finite) alphabet $\mathcal A$ provided
\begin{enumerate}
\item the relative frequency of each $a\in\mathcal A$ in $\mathbf x$ is given by a probability vector $\mathbf q$. That is 
\begin{equation*}
\lim_{n\to\infty} \frac{|\{i: 1\leq i\leq n \text{ such that } x_i =a \}|}{n} = q_a,
\end{equation*}
\item a finite or countable set of forbidden words are given.
\end{enumerate}
\end{remark}

\subsection{Computer simulations}\label{subsec:CompSimu}

In the examples presented in Subsection \ref{subsec:examples} we constructed the simplest possible Markov shift from a probability vector $\mathbf p =(p_1,p_2,\ldots)$. Namely, the random sequence of characters $X_1,X_2,\ldots$ was generated as follows:
\begin{enumerate}
\item Until an eliminated word from $\mathcal{E}$ appears in the sequence, all $X_i$ are sampled independently from $\mathbf p$.
\item When an eliminated word appears, then the last character, say $X_k$ is re-sampled from $\mathbf p$ until it no longer concludes an eliminated word.
\item Repeat from step (1). 
\end{enumerate}
The task was to determine $\mathbf p$ so that the stationary distribution $\Pi$ of the characters of the alphabet (described in \eqref{eq:piandq_k}) would be equal to the desired $\mathbf q$ from \eqref{eq:def_q_k}. Direct solution of the inverse problem seemed prohibitive, except for the few examples given, but considering it as an optimization problem made it tractable. One particular derivative-free optimization method, differential evolution (DE) \cite{PriceBook2005,StornPrice1997DE} has proven in our hands to be a powerful tool to generate such random models.

We carried out the simulations on the alphabet $\mathcal{A}^\ast=\{1,2,\ldots,25,\ast\}$ with eliminated words $\mathcal{E}=\mathcal F_1\cup \mathcal F_2\cup\ldots\cup\mathcal F_{25}$. Our goal was to find a discrete input distribution $\mathbf p$ such that by randomly sampling this distribution and eliminating forbidden words
\begin{itemize}
\item the output distribution $\Pi$ will be close to $\mathbf q$ in the sense that the root-mean-square (RMS) of $\Pi-\mathbf q$ is small and
\item the model will provide a $c_0$ value close to the conjectured $2.2883...$ value in our series $\boldsymbol{\Omega}$. 
\end{itemize}
Observe that depending on how the re-sampling is done in step (2), two different models can give different values of $c_0$ even if their output distributions are the same. For this reason we worked with two models. In Model~1 the re-sampling was done as described in step (2). That is, if $X_k=\ell$ concludes a forbidden word, then $X_k$ is re-sampled from the characters $\{\ell+1,\ldots,25,\ast\}$ with ratios $(p_{\ell+1},\ldots,p_{25},p_{\ast})$. In Model~2 the re-sampling was done deterministically by choosing $X_k=\ell+1$.   

Starting with reasonable upper and lower bounds to $\mathbf p$ and testing several different flavors of DE, we were able to find input distributions $\mathbf p^{(1)}$ and $\mathbf p^{(2)}$, which gave reasonably accurate output distributions $\Pi^{(1)}$ and $\Pi^{(2)}$ for Model~1 and Model~2, respectively:
\begin{equation*}
\mathrm{RMS}(\Pi^{(1)}-\mathbf q)\!=\!\sqrt{\frac{1}{26}\sum_i (\Pi_i^{(1)}-\mathbf{q}_i)^2}=3.3\times 10^{-4}  \text{ and } \mathrm{RMS}(\Pi^{(2)}-\mathbf q)= 8.5\times 10^{-6}.
\end{equation*}
Table \ref{table:SimulatedDistributions} contains the results of this inverse optimization procedure we utilized. With these distributions we generated a total of 100 random sequences per model, each of length $10^8$ and computed statistics for $c_0$. Table \ref{table:StatOfc0} contains the results. Originally we worked only with Model~1, but were unable to find a distribution with better $\ell_2$ error than the one presented. However, the estimate for $c_0$ is worse than the ones in Subsection \ref{subsec:examples}. For this reason we introduced Model~2. The error achieved was much better and resulted in the closest estimate for the conjectured value of $c_0$ so far. This could suggest that the result is very sensitive on the magnitude of the error, so finding a way to achieve a smaller error for Model~1 is still work in progress. We plan to address this inverse optimization problem in greater detail in a separate forthcoming paper. The C code we used to generate the random sequences can be found in Appendix \ref{sec:Ccode}.

\begin{table}[]
	\centering
	\caption{Result of optimization to find input distributions $\mathbf p^{(1)}$ and $\mathbf p^{(2)}$ which give $\Pi^{(1)}$ and $\Pi^{(2)}$ close to desired $\mathbf q$ for Models 1 and 2.}\label{table:SimulatedDistributions}
	\label{my-label}
	\begin{tabular}{lllll}
		\toprule
		$\mathbf q$          & $\mathbf p^{(1)}$       & $\Pi^{(1)}$  & $\mathbf p^{(2)}$       & $\Pi^{(2)}$  \\ \midrule
		\rowcolor[HTML]{E8E8E8} 
		0.6079271019    & 0.778066482     & 0.60725599 & 0.7791783767    & 0.60792069 \\
		0.2239802707    & 0.1808120958    & 0.22329953 & 0.166204795     & 0.22398693 \\
		\rowcolor[HTML]{E8E8E8} 
		0.09203103034   & 0.0344511792    & 0.09135697 & 0.03914430883   & 0.09202952 \\
		0.04044893751   & 0.005501330873  & 0.03988083 & 0.01075578968   & 0.04044403 \\
		\rowcolor[HTML]{E8E8E8} 
		0.01856525184   & 0.0008606020801 & 0.01810934 & 0.003211510904  & 0.01856443 \\
		0.008767263574  & 0.0001788032959 & 0.00860553 & 0.0009983211701 & 0.0087562  \\
		\rowcolor[HTML]{E8E8E8} 
		0.004219345287  & 5.589433871e-05 & 0.00429196 & 0.0003228213903 & 0.00421908 \\
		0.002056431605  & 2.736073937e-05 & 0.00253894 & 0.000100454897  & 0.00205053 \\
		\rowcolor[HTML]{E8E8E8} 
		0.001010780338  & 1.31193597e-05  & 0.00131435 & 3.482384896e-05 & 0.00100937 \\
		0.0004996424286 & 7.594988824e-06 & 0.00076872 & 1.119959966e-05 & 0.00049958 \\
		\rowcolor[HTML]{E8E8E8} 
		0.0002479184928 & 4.286503901e-06 & 0.0004351  & 5.66707383e-06  & 0.00024959 \\
		0.0001233277188 & 3.540683918e-06 & 0.00035849 & 6.236370813e-06 & 0.00012684 \\
		\rowcolor[HTML]{E8E8E8} 
		6.145390691e-05 & 2.973346931e-06 & 0.00030085 & 6.553250804e-06 & 6.628e-05  \\
		3.065708326e-05 & 2.403969944e-06 & 0.0002432  & 6.186574815e-06 & 3.557e-05  \\
		\rowcolor[HTML]{E8E8E8} 
		1.530527483e-05 & 2.812292935e-06 & 0.00028336 & 5.746875828e-06 & 2.025e-05  \\
		7.644886553e-06 & 3.03521793e-06  & 0.00030529 & 3.470847896e-06 & 1.05e-05   \\
		\rowcolor[HTML]{E8E8E8} 
		3.819860619e-06 & 2.459252943e-06 & 0.00024817 & 1.430745957e-06 & 4.78e-06   \\
		1.909069617e-06 & 2.294506947e-06 & 0.00023149 & 1.018855969e-06 & 2.73e-06   \\
		\rowcolor[HTML]{E8E8E8} 
		9.542479522e-07 & 1.121315974e-06 & 0.00011425 & 7.578192773e-07 & 1.78e-06   \\
		4.770283636e-07 & 1.70937596e-07  & 1.707e-05  & 1.379581959e-07 & 5.3e-07    \\
		\rowcolor[HTML]{E8E8E8} 
		2.384823142e-07 & 2.59843894e-07  & 2.492e-05  & 2.503132925e-07 & 4.6e-07    \\
		1.192305334e-07 & 9.695063776e-08 & 9.22e-06   & 5.449547837e-08 & 1.6e-07    \\
		\rowcolor[HTML]{E8E8E8} 
		5.961172722e-08 & 5.074703883e-08 & 4.52e-06   & 5.148644846e-08 & 1.2e-07    \\
		2.980468194e-08 & 1.579332963e-08 & 1.1e-06    & 2.473640926e-08 & 3e-08      \\
		\rowcolor[HTML]{E8E8E8} 
		1.490194901e-08 & 6.098237859e-09 & 4.2e-07    & 5.160489846e-09 & 2e-08      \\
		1.490155355e-08 & 9.831509773e-09 & 3.9e-07    & 5.419687838e-09 & 0          \\ \bottomrule
	\end{tabular}
\end{table}

\begin{table}[H]
	\caption{Statistics of $c_0$ calculated from random sequences generated by Model~1 and Model~2 using the distributions from Table~\ref{table:SimulatedDistributions}.} \label{table:StatOfc0}
	\begin{center}
		\begin{tabular}{lrr}
			\toprule
			$c_0$ & Model~1 & Model~2  \\ \midrule
			\rowcolor[HTML]{E8E8E8}
			Mean & 2.345592 & 2.280233  \\
			Std.Dev. & 8.51e-05 & 6.82e-05  \\
			\rowcolor[HTML]{E8E8E8}
			Min. & 2.34532 & 2.28006  \\
			Max. & 2.3458 & 2.28038  \\
			\rowcolor[HTML]{E8E8E8}
			Median & 2.34559 & 2.28023  \\
			\bottomrule
		\end{tabular}
	\end{center}
\end{table}

\subsection{Direct computation of $L_\infty$}
During our computations we needed to keep track of the values of $L_\infty$ for all primes. The complete program code with explanations is provided in Appendix \ref{sec:Code}. In Table \ref{table:L_inftyN/N} below we list the value of $L_\infty(p_k)/p_k$ for a few values of $k$. Each entry in the row of $L_\infty(p_k)/p_k$ begins with $2.2883\ldots$, so just the following 6 digits are listed in each case. 
\begin{table}[H]
	\caption{The ratio $L_\infty(N)/N$ at few specific primes. In the row of $L_\infty(p_k)/p_k$ all entries begin with $2.2883\ldots$} \label{table:L_inftyN/N}
	\begin{center}
		\begin{tabular}{l|rrrrr@{}}
			\toprule
			$k$ & $10^5$ & $10^6$ & $10^7$ & $10^8$ & $10^9$ \\ \midrule
			\rowcolor[HTML]{E8E8E8}
			$p_k$ & $1\,299\,709$ & $15\,485\,863$ & $179\,424\,673$ & $2\,038\,074\,743$ & $22\,801\,763\,489$ \\
			$L_\infty(p_k)$ & $2\,974\,210$ & $35\,437\,380$ & $410\,589\,942$ & $4\,663\,867\,856$ & $52\,178\,860\,638$ \\
			\rowcolor[HTML]{E8E8E8}
			$L_\infty(p_k)/p_k$ &  ...660881 & $\ldots 697214$ & $\ldots 694596$ & $\ldots 693897$ & $\ldots 695229$ \\
			\bottomrule
		\end{tabular}
	\end{center}
\end{table} 
The ratios differ by less than $10^{-5}$. These are just a few values, but we calculated the ratios for every $10^5$-th prime between $1$ and $10^{12}$. This gave a set of $376\,079$ data points. The minimal value is $2.28836250$ and the maximal is $2.288371417$ giving a difference of $8.9\times 10^{-6}$. A list-plot of the values reveals even more in Figure \ref{fig:ratio}. 

\begin{figure}[H]
	\centering
	\includegraphics[width=0.95\textwidth]{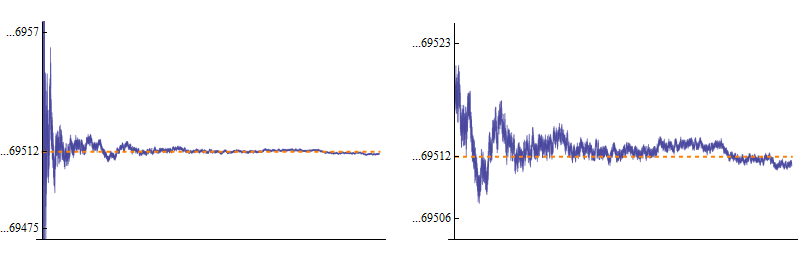}
	\caption{Ratio $L_\infty(p_k)/p_k$ for every $10^5$-th prime between 1 and $10^{12}$. Close-up on right hand side. Each value begins with $2.2883\ldots$}
	\label{fig:ratio}
\end{figure}
There is some oscillation in the beginning, but then very convincingly it seems to settle around a value quite rapidly. Based on these results we conjecture that $c_0\approx 2.28836951\ldots$.
\subsection{Modified prime number theorem}\label{subsec:ModPNT}

Let us modify the prime counting function to count the number of primes up to $N$ on the number trail,
$$\pi_\infty(N):= \max \{k:\; L_\infty(p_k)\leq N\}.$$
Of course, $\pi_\infty(L_\infty(p_k))=k$. If Conjecture \ref{conj:Linfty/Nexists} holds true, then in a way the $\ell_\infty$ norm is just rescaling the traditional number line. Conceivably it is reasonable to think that the asymptotics of $\pi_\infty(N)$ change accordingly.
\begin{conjecture}[Modified Prime Number Theorem]\label{conj:ModPNT}
	\begin{equation*}
	\lim_{N\to\infty}\frac{\pi_\infty(N)}{N/\log N} = \lim_{N\to\infty}\frac{\pi_\infty(N)}{\mathrm{Li}(N)} = \frac{1}{c_0} \approx 0.436992\ldots,
	\end{equation*}
	where $c_0$ is the constant defined by the limit of $L_\infty(N)/N$ in \eqref{eq:ConjectureLinfty/N} and $\mathrm{Li}(x)$ is the offset logarithmic integral function $\int_{2}^{x}1/\ln y \mathrm dy$.
\end{conjecture}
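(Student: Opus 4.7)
The plan is to derive Conjecture \ref{conj:ModPNT} conditionally from Conjecture \ref{conj:Linfty/Nexists} together with the classical prime number theorem, viewing $\pi_\infty$ essentially as the compositional inverse of $L_\infty\circ p_\cdot$ and then invoking the PNT in the standard form $\pi(x)\sim x/\log x\sim \mathrm{Li}(x)$. The whole argument is a squeeze, so no quantitative rate of convergence for $L_\infty(N)/N$ is needed beyond the bare existence of the limit $c_0$.

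First I would set $k:=\pi_\infty(N)$, so that by the definition of $\pi_\infty$,
\begin{equation*}
L_\infty(p_k)\leq N < L_\infty(p_{k+1}).
\end{equation*}
As $N\to\infty$ the right inequality forces $L_\infty(p_{k+1})\to\infty$, and since $L_\infty$ takes only finitely many values on any bounded set of primes, we must have $k\to\infty$, hence $p_k\to\infty$. Dividing the inequality by $p_k$ gives
\begin{equation*}
\frac{L_\infty(p_k)}{p_k}\;\leq\;\frac{N}{p_k}\;<\;\frac{L_\infty(p_{k+1})}{p_{k+1}}\cdot\frac{p_{k+1}}{p_k}.
\end{equation*}
By Conjecture \ref{conj:Linfty/Nexists} both $L_\infty(p_k)/p_k$ and $L_\infty(p_{k+1})/p_{k+1}$ converge to $c_0$, while the classical PNT (in the consequence $p_{k+1}/p_k\to 1$) handles the remaining factor. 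Hence $N/p_k\to c_0$, i.e.\ $p_k\sim N/c_0$.

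The second step is to translate this back to $k$ via the classical PNT itself. Writing $k=\pi(p_k)\sim p_k/\log p_k$ and substituting $p_k\sim N/c_0$ yields
\begin{equation*}
\pi_\infty(N)=k \;\sim\; \frac{N/c_0}{\log(N/c_0)} \;\sim\; \frac{N}{c_0\log N},
\end{equation*}
using $\log(N/c_0)=\log N-\log c_0\sim\log N$. For the $\mathrm{Li}$ formulation, the same chain $k=\pi(p_k)\sim\mathrm{Li}(p_k)\sim\mathrm{Li}(N/c_0)$ works, and one finishes by noting $\mathrm{Li}(N/c_0)/\mathrm{Li}(N)\to 1$, which is immediate from $\mathrm{Li}(x)\sim x/\log x$.

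The main (and essentially the only) obstacle is the unproven status of Conjecture \ref{conj:Linfty/Nexists}: the argument above is entirely conditional on the existence of $c_0$, so Conjecture \ref{conj:ModPNT} cannot presently be upgraded to a theorem. A secondary, very minor technical point is to check that nothing pathological happens at the boundary when $N$ coincides with some $L_\infty(p_k)$, but this is absorbed into the squeeze since both $L_\infty(p_k)/p_k$ and $L_\infty(p_{k+1})/p_{k+1}$ share the same limit. One could also phrase the conclusion more robustly as: if the limit $c_0=\lim L_\infty(N)/N$ exists, then $\pi_\infty(N)\sim N/(c_0\log N)$, and conversely, so the two conjectures are equivalent modulo the classical PNT.
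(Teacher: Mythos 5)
The paper does not prove this statement at all: it is left as a conjecture, supported only by the numerical data in Table~\ref{table:pi_infty(N)}, and the authors explicitly write that they do not know whether, assuming Conjecture~\ref{conj:Linfty/Nexists}, a proof of the classical prime number theorem could be adapted to obtain Conjecture~\ref{conj:ModPNT}. Your proposal therefore goes beyond the paper rather than paralleling it, and the core of your argument is sound: from $L_\infty(p_k)\le N<L_\infty(p_{k+1})$ with $k=\pi_\infty(N)$, Conjecture~\ref{conj:Linfty/Nexists} restricted to the subsequence of primes plus $p_{k+1}/p_k\to1$ (a standard consequence of the PNT) squeezes $N/p_k\to c_0$, and then $k=\pi(p_k)\sim p_k/\log p_k$ gives $\pi_\infty(N)\sim N/(c_0\log N)$. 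This is exactly the kind of conditional reduction the authors say they could not see, and it buys a clean statement: Conjecture~\ref{conj:ModPNT} is implied by Conjecture~\ref{conj:Linfty/Nexists} together with the classical PNT, so the numerical evidence for the latter already supports the former.

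Two points to fix. First, in the $\mathrm{Li}$ branch you assert $\mathrm{Li}(N/c_0)/\mathrm{Li}(N)\to 1$; since $\mathrm{Li}(x)\sim x/\log x$, the correct limit is $1/c_0$, and as written your chain would output $\pi_\infty(N)\sim\mathrm{Li}(N)$, contradicting your own (correct) first conclusion $\pi_\infty(N)\sim\mathrm{Li}(N)/c_0$. The fix is trivial --- indeed the whole $\mathrm{Li}$ statement follows from the $N/\log N$ statement via $\mathrm{Li}(N)\sim N/\log N$ --- but the constant matters since $c_0\approx 2.288$. Second, your closing claim that the two conjectures are ``equivalent modulo the classical PNT'' is slightly overstated: reversing the argument only yields $L_\infty(p_k)/p_k\to c_0$ along the primes; to recover the limit for all $N$ you additionally need the monotonicity of $L_\infty$ together with $p_{k+1}/p_k\to1$ to control $L_\infty(N)/N$ for $p_k\le N<p_{k+1}$. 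That step works, but it should be said explicitly rather than implied.
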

In Table \ref{table:pi_infty(N)} we present the ratios for a few increasing values of $p_k$.
\begin{table}[H]
\caption{Comparing $\pi_\infty(N)$ to $N\log N$ and $\mathrm{Li}(N)$, for $N=L_\infty(p_k)$.} \label{table:pi_infty(N)}
\begin{center}
	\begin{tabular}{c|rcr@{}}
		\toprule
		$k$ & $p_k$ & $\pi_\infty(N)\cdot \log(N) / N$ & $\pi_\infty(N)/\mathrm{Li}(N)$  \\ \midrule
		\rowcolor[HTML]{E8E8E8}
		$10^6$ & $15\,485\,863$ & $0.49053507\ldots$ & $0.46030511\ldots$   \\
		$10^7$ & $179\,424\,673$ & $0.48303924\ldots$ & $0.45721224\ldots$  \\
		\rowcolor[HTML]{E8E8E8}
		$10^8$ & $2\,038\,074\,743$  & $0.47735295\ldots$ & $0.45478434\ldots$  \\
		$10^9$ & $22\,801\,763\,489$ & $0.47294906\ldots$ & $0.45289153\ldots$  \\
		\rowcolor[HTML]{E8E8E8}
		$10^{10}$ &  $252\,097\,800\,623$ & $0.46942719\ldots$ & $0.45136754\ldots$  \\
		$2\cdot10^{10}$ & $518\,649\,879\,439$ & $0.46850132\ldots$ & $0.45096581\ldots$  \\
		\rowcolor[HTML]{E8E8E8}
		$3\cdot10^{10}$ &  $790\,645\,490\,053$ & $0.46798418\ldots$ & $0.45074112\ldots$  \\
		\bottomrule
	\end{tabular}
\end{center}
\end{table} 
As in the case on the traditional number line $\mathrm{Li}(N)$ gives a better approximation to $\pi_\infty(N)$ than $N/\log N$. Both sequences are decreasing nicely, but unfortunately, if there is convergence it seems to be very slow. The numbers do not confirm our conjecture, but they certainly do not contradict it either. We don't know if assuming Conjecture \ref{conj:Linfty/Nexists} an existing proof of the classical prime number theorem could be adapted to this setting to prove Conjecture \ref{conj:ModPNT}.

\section{The prime gap functions $\mathcal D^1$ and $\mathcal D^2$}\label{sec:D1andD2}

We begin the section with simple observations about $\mathcal D_1$, recall \eqref{def:PrimegapOnNumbertrail}, and then point out key differences between $\mathcal D_1$ and $\mathcal D_2$ compared to $D_1$ and $D_2$.

\subsection{Observations about the prime gap function $\mathcal D^1$}

\begin{claim}\label{claim:D1not35}
	$\mathcal D^1$ can take arbitrarily large values. However, $\mathcal D^1$ never takes the values 3 or 5.
\end{claim}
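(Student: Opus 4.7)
The plan is to exploit the formula
\begin{equation*}
\mathcal{D}^1_k \;=\; L_\infty(p_{k+1})-L_\infty(p_k) \;=\; \sum_{K=p_k}^{p_{k+1}-1}\max\{\|K\|_\infty,\|K{+}1\|_\infty\},
\end{equation*}
which writes $\mathcal{D}^1_k$ as a sum of exactly $D^1_k=p_{k+1}-p_k$ terms, each of which is at least $1$. Hence $\mathcal{D}^1_k\geq D^1_k$, and since prime gaps are unbounded (for instance $D^1_k\geq n$ infinitely often because $n!+2,\ldots,n!+n$ are all composite, or more strongly by the result of Ford--Green--Konyagin--Maynard--Tao quoted in the introduction), $\mathcal{D}^1_k$ is also unbounded.

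For the second assertion I would proceed by a short case analysis on the possible values of $D^1_k$, using that $D^1_k$ is either $1$ (only when $k=1$) or an even number (for $k\geq 2$, since both $p_k$ and $p_{k+1}$ are odd). If $D^1_k\geq 6$ then $\mathcal{D}^1_k\geq 6$, so the interesting cases are $D^1_k\in\{1,2,4\}$. When $D^1_k=1$ one checks directly that $\mathcal{D}^1_1=1$. When $D^1_k=2$, writing $K=p_k+1$ (the single even number strictly between the two primes, which has $\|K\|_\infty\geq 1=\|p_k\|_\infty=\|p_{k+1}\|_\infty$), one gets $\mathcal{D}^1_k=2\|K\|_\infty$, which is even and in particular neither $3$ nor $5$.

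The main work sits in the case $D^1_k=4$. Write $K_i=p_k+i$ for $i=1,2,3$ and set $a=\|K_1\|_\infty$, $b=\|K_2\|_\infty$, $c=\|K_3\|_\infty$ (all $\geq 1$); since the outer terms simplify to $a$ and $c$, one obtains
\begin{equation*}
\mathcal{D}^1_k \;=\; a+\max(a,b)+\max(b,c)+c.
\end{equation*}
The value $3$ is ruled out because this is a sum of four positive integers. Ruling out $5$ is the step that requires a small argument: I would split into the three subcases determined by whether $b$ is exceeded by $a$ and by $c$, and in each subcase show the resulting linear constraint $2a+b+c=5$, $a+b+2c=5$, or $a+2b+c=5$ together with the appropriate inequalities forces a contradiction (essentially because in each subcase some coefficient is at least $2$ on a variable that is itself at least $2$, so the sum is already $\geq 6$). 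This parity/size obstruction is the only non-routine piece of the argument, and it is exactly what explains why $\mathcal{D}^1_k$ jumps from $4$ to $6$ rather than passing through $5$ when $D^1_k=4$. Combining the four cases completes the proof that $\mathcal{D}^1\notin\{3,5\}$.
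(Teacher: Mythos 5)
Your argument is correct and follows the same skeleton as the paper's proof: both establish $\mathcal D^1_k\geq D^1_k=p_{k+1}-p_k$ to get unboundedness and to reduce the second assertion to the gaps $1$, $2$ and $4$, and both observe that a twin-prime gap contributes $\max\{1,\|p_k+1\|_\infty\}+\max\{\|p_k+1\|_\infty,1\}=2\|p_k+1\|_\infty$, an even number. The only genuine divergence is the gap-$4$ case. The paper kills it with one arithmetic observation: one of $p_k+1$, $p_k+3$ is divisible by $4$, hence has $\ell_\infty$ norm at least $2$, so $\mathcal D^1_k\geq 2+2+1+1=6$. You instead analyse $a+\max(a,b)+\max(b,c)+c$ with $a,b,c\geq 1$ purely combinatorially; this does work (the value is $4$ when $a=b=c=1$ and at least $6$ as soon as any variable is $\geq 2$, so $5$ is skipped), and it has the mild advantage of using nothing about $p_k$ beyond its being odd, whereas the paper's observation is shorter and gives $\geq 6$ in one line. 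Two small imprecisions in your sketch, neither fatal: the two maxima produce four sign patterns, not three, and the omitted one ($\max(a,b)=a$, $\max(b,c)=c$) gives $2(a+c)$, which you should dismiss as even; and in the subcase $2a+b+c=5$ with $a\geq b\geq c$ your heuristic ``a coefficient $2$ on a variable that is itself $\geq 2$'' does not apply when $a=1$ --- there the contradiction is that $a=1$ forces $b=c=1$ and the value $4$, while $a\geq 2$ forces the value $\geq 6$. With those subcases written out, your proof is complete.
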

\begin{proof}
	The prime number theorem states that the number of primes up to $N$ grows sub-linearly, hence there are arbitrarily large gaps between two consecutive primes. On the other hand, we know that $ \; L(p_{k+1})-L(p_k)\geq p_{k+1}-p_k.$ This implies the first assertion.
	
	For the second assertion we can assume that $p_k\neq 2$ (since $\mathcal D_1^1=1$). Since $\mathcal D_k^1\geq p_{k+1}-p_k$, $\mathcal D_k^1$ could equal 3 if and only if $p_{k+1}=p_k+2$ and similarly $\mathcal D_k^1$ could equal 5 if and only if $p_{k+1}=p_k+2$ or $p_{k+1}=p_k+4$.
	
	If $p_{k+1}=p_k+2$, then \eqref{def:Linfty} implies
	\begin{equation}\label{eq:D^1twinprime}
	\mathcal D_k^1= \max\{\|p_k\|, \|p_k+1\|\} + \max\{\|p_k+1\|, \|p_k+2\|\} = 2 \|p_k+1\|_\infty,
	\end{equation}
	which is clearly an even number.
	
	If $p_{k+1}=p_k+4$, then either $p_k+1$ or $p_k+3$ is divisible by 4, hence has $\ell_\infty$ norm at least $2$ and so $\mathcal D_k^1\geq 2+2+1+1 >5$.
	
	
\end{proof}
It is also not difficult to characterize $\mathcal D^1$ for the first few small values.
\begin{claim}
	$\mathcal D_k^1$  equals (a) 2, (b) 4, or (c) 6 for some $k$ if and only if
	\begin{enumerate}[(a)]
		\item $p_{k+1}$ and $p_k$ are twin prime and $p_k+1$ is a square-free number,
		\item $p_{k+1}$ and $p_k$ are twin prime and $\|p_k+1\|_\infty=2$,
		\item $p_{k+1}$ and $p_k$ are twin prime and $\|p_k+1\|_\infty=3$ OR \newline $p_{k+1}=p_k+4$ and $\|p_k+2\|_\infty=1$ and either $p_k\!+\!1$ or $p_k\!+\!3$ is divisible by 4.
	\end{enumerate}
	In general, if $p_{k+1}$ and $p_k$ are twin prime and $\|p_k+1\|_\infty=n$ then $\mathcal D_k^1=2n$.
\end{claim}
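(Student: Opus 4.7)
The plan is to reduce everything to a case analysis on the prime gap $p_{k+1}-p_k$, exploiting the elementary lower bound $\mathcal{D}_k^1 \geq p_{k+1}-p_k$ already used in Claim~\ref{claim:D1not35} to limit the gap to at most $6$ in the three cases under consideration. The ``In general'' assertion falls out immediately from equation~\eqref{eq:D^1twinprime}: for twin primes one has $\mathcal{D}_k^1 = 2\|p_k+1\|_\infty$, so setting this equal to $2n$ is equivalent to $\|p_k+1\|_\infty = n$. This identity simultaneously supplies the forward implications of (a), (b), (c) in the twin-prime sub-case, recalling that $\|p_k+1\|_\infty=1$ is the definition of $p_k+1$ being square-free.

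For the converse direction I would rule out the larger possible gaps. Whenever $p_{k+1}=p_k+4$, exactly one of the even numbers $p_k+1,p_k+3$ is divisible by $4$ and hence has $\ell_\infty$-norm at least $2$; since it appears in two consecutive $\max$-terms of \eqref{def:Linfty}, one gets
\begin{equation*}
\mathcal{D}_k^1 \geq 2+2+1+1 = 6,
\end{equation*}
which eliminates (a) and (b) entirely outside the twin-prime case. A gap of size $6$ would force all six $\max$-terms to equal $1$, requiring $p_k+1,\ldots,p_k+5$ to all be square-free — impossible, since among the three even numbers in that range at least one is divisible by $4$. Hence case (c) outside the twin-prime regime can only arise when the gap equals exactly $4$. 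I would finish by expanding $\mathcal{D}_k^1 = a + \max(a,b) + \max(b,c) + c$ with $a=\|p_k+1\|_\infty$, $b=\|p_k+2\|_\infty$, $c=\|p_k+3\|_\infty$ under $\max(a,c)\geq 2$, and checking by a short monotonicity argument that the sum equals $6$ precisely when $\{a,c\}=\{2,1\}$ and $b=1$.

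The main technical obstacle is sharpness in this final case check: divisibility by $4$ only provides a \emph{lower} bound on the $\ell_\infty$-norm, so one must separately verify that the relevant exponent is exactly $2$ rather than $\geq 3$, and that both $p_k+2$ and the other even neighbour are genuinely square-free rather than merely of bounded norm. None of these verifications is deep, but together they require a careful enumeration of subcases to avoid missing a combination of exponents (for example $a=2, b=2, c=1$ or $a=3, b=1, c=1$) that would inflate the sum past $6$. Once this is done, translating the $(a,b,c)$-constraints back into the divisibility language of the claim completes (c).
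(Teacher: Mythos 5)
Your plan follows essentially the same route as the paper's own proof: bound the gap via $\mathcal D_k^1\geq p_{k+1}-p_k$ together with the observation that a gap of $4$ already forces $\mathcal D_k^1\geq 2+2+1+1=6$ (so values $2$ and $4$ require twin primes), handle the twin-prime cases through the identity $\mathcal D_k^1=2\|p_k+1\|_\infty$, rule out gap $6$, and finish case (c) by expanding $\mathcal D_k^1=a+\max(a,b)+\max(b,c)+c$. You are in fact somewhat more careful than the paper, which dismisses the converse directions as ``trivial calculations'': your remark that divisibility by $4$ only gives a lower bound on the norm, so that the precise condition in the non-twin part of (c) is $\{a,c\}=\{2,1\}$ and $b=1$, is a legitimate sharpening of the claim as literally stated.
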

\begin{proof}
	We saw at the end of the proof of Claim \ref{claim:D1not35} that if  $p_{k+1}-p_k\geq4$ then $\mathcal D_k^1>5$. Hence, if $\mathcal D_k^1=2$ or $4$, then $p_{k+1}$ and $p_k$ must be twin prime. Moreover, \eqref{eq:D^1twinprime} implies that $\|p_k+1\|_\infty=1$ or $2$, respectively. If $\mathcal D_k^1=6$ then either $p_{k+1}$ and $p_k$ are twin prime and $\|p_k+1\|_\infty=3$ or $p_{k+1}=p_k+4$ ($p_{k+1}\geq p_k+6$ not possible). If $p_{k+1}=p_k+4$ then it further follows that one of $p_k+1$ or $p_k+3$ is divisible by 4 and $p_k+2$ is square-free.
	
	The other directions and the last claim are just trivial calculations.
\end{proof}

By definition, $D^1_k=n$ if and only if $p_{k+1}-p_k=n$. However,  $\mathcal D^1$ can take a particular value $n$ in a number of ways. This number is related to the partition function $P(n)$, which counts all the distinct ways $n$ can be written as the sum of natural numbers. Hardy and Ramanujan were the first to show that the asymptotic growth rate of $P(n)$ is $1/(4\sqrt 3n)\exp(\pi\sqrt{2n/3})$. Here there are several restrictions on the partitions. For example, the partition must have an even number of summands (since $p_{k+1}-p_k$ is even) and the largest element of the partition must appear at least twice in it. Also, the partition can not contain any forbidden word from $\mathcal F$ (recall \eqref{eq:def_ForbiddenWords}). A characterization of these partitions or at least some asymptotic result on their number seems difficult. 


\subsection{Comparison of $D^1$ and $D^2$ to $\mathcal{D}^1$ and $\mathcal{D}^2$}

For reference, Fig. \ref{fig:Histogram_D1_D2} shows the familiar histograms of the prime gap functions $D^1$ and $D^2$ up to $N\leq10^{12}$. The semi-regular spiked structure of these histograms has been shown to be attributed to the fact that for every prime number $p$, except 2, $p=\pm 1(\mathrm{mod} 6)$ \cite{Ares2006}.

\begin{figure}[H]
	\centering
	\begin{subfigure}[b]{0.51\textwidth}
		\includegraphics[width=0.95\textwidth]{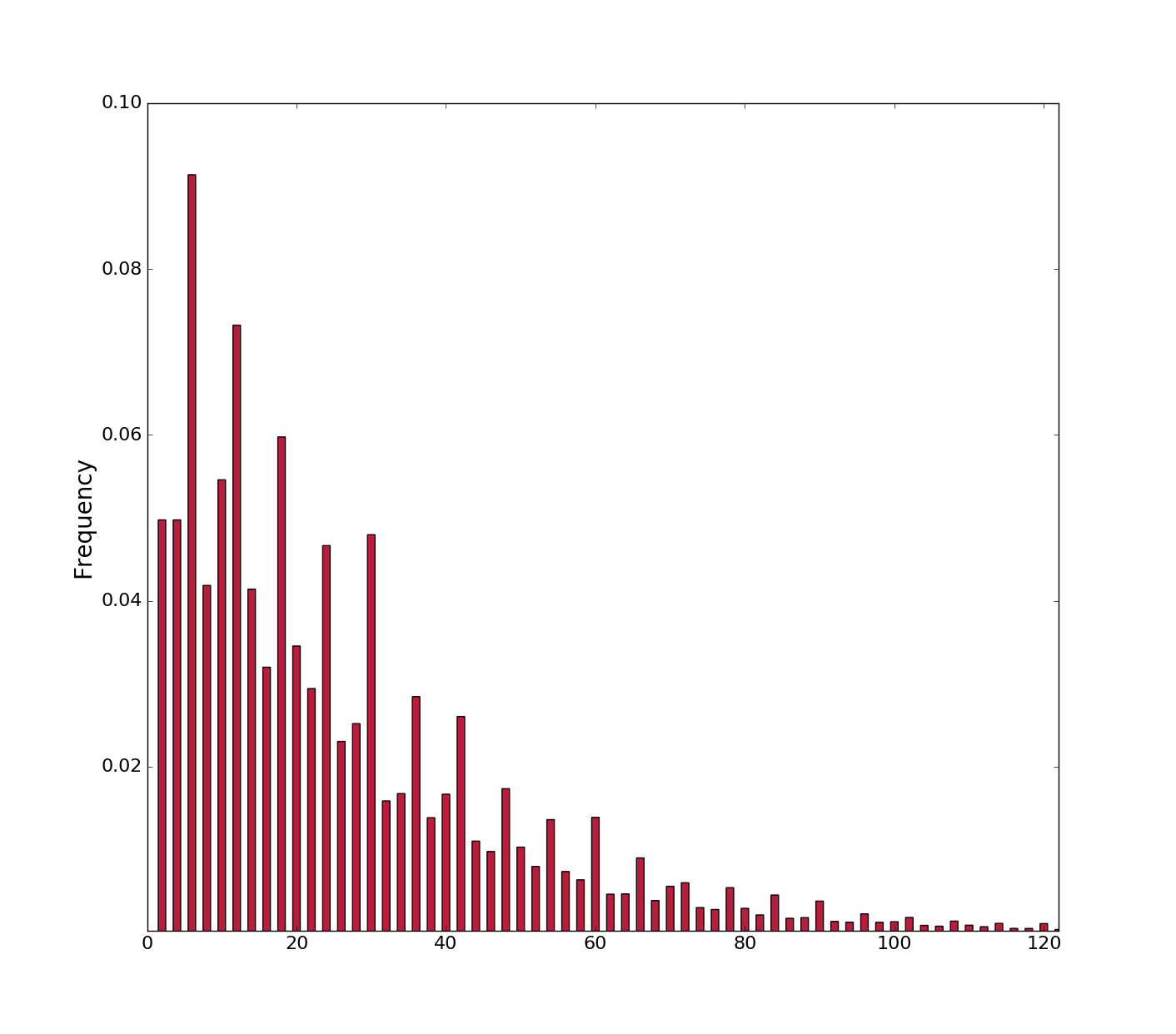}
		\caption{$D_k^1=p_{k+1}-p_k$}
		\label{fig:Hist_D1_1e12}
	\end{subfigure}
	\begin{subfigure}[b]{0.48\textwidth}
		\includegraphics[width=\textwidth]{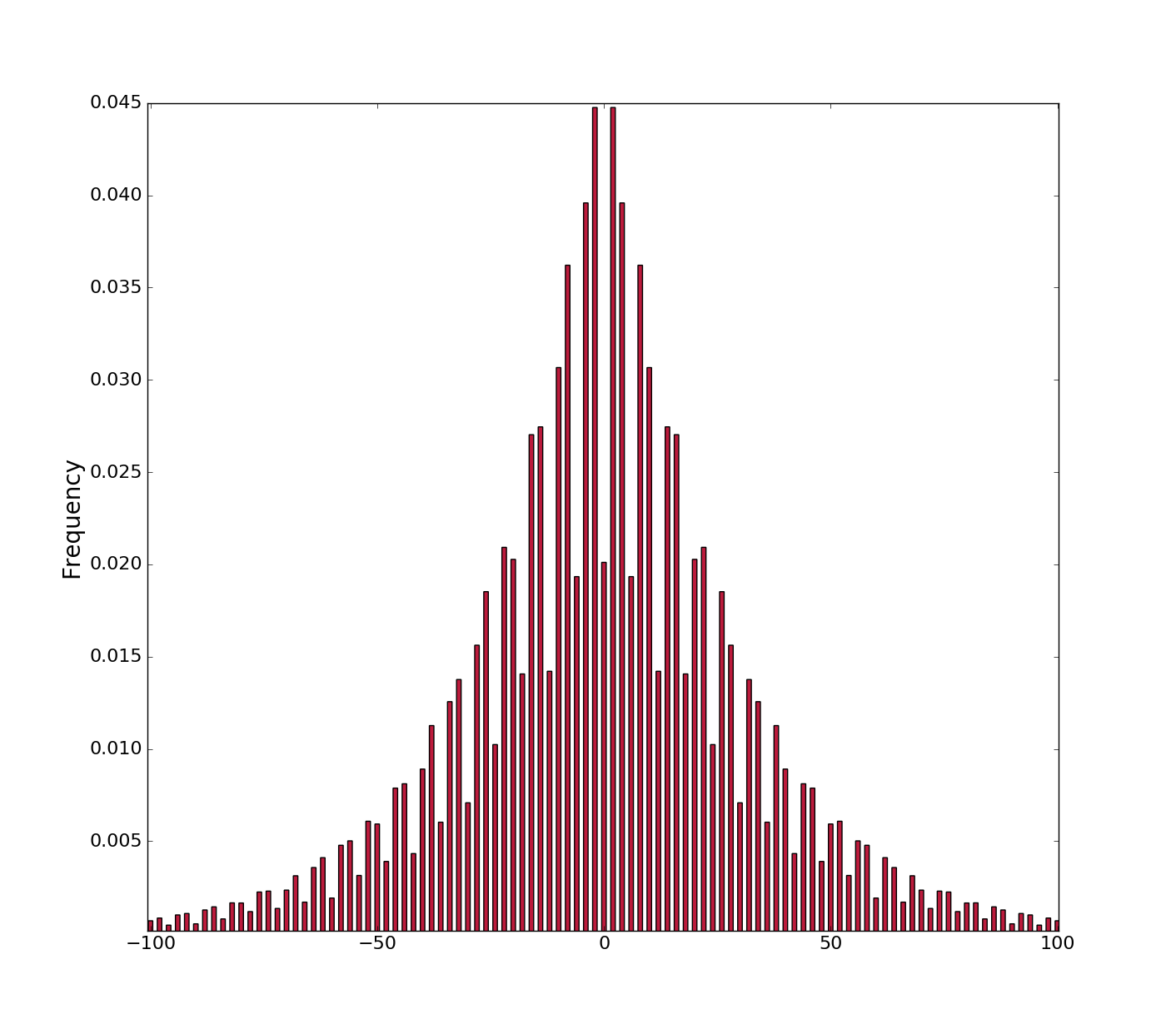}
		\caption{$D_k^2=D_{k+1}^1-D_k^1$}
		\label{fig:Hist_D2_1e12}
	\end{subfigure}
	\caption{Histogram of first \eqref{fig:Hist_D1_1e12} and second \eqref{fig:Hist_D2_1e12} order differences between consecutive prime numbers up to $N\leq 10^{12}$. }\label{fig:Histogram_D1_D2}
\end{figure}
Polignac's conjecture is still open, stating that for any positive even number $N$ there are infinitely many $k$ such that $D^1_k=N$. Huang and Wu \cite{HuangWu2015} recently showed that the set $D=\{d: d$ can be expressed in infinitely many ways as the difference of two primes $\}$ is a $\Delta^*_r$ set for $r\geq 721$, i.e. for any subset $S$ of $\mathbb N$ with $|S|\geq 721$, the intersection of $D$ and the difference set of $S$ is nonempty. The histogram  shows that up to $10^{12}$ the most frequently appearing value of $D^1$ is 6. An integer is called a jumping champion if it is the most frequently appearing value of $D^1$ up to some $N$. The histogram is misleading in the sense that there is a convincing heuristic argument of Odlyzko--Rubinstein--Wolf \cite{ORW99JupingChamps} to support the jumping champion conjecture, stating that the jumping champions are 4 and the primorials $2,6,30,210,\ldots,\prod_{i=1}^{k}p_i,\ldots$. They argue that 30 only becomes jumping champion around $1.74\times10^{35}$. We continue with the histogram of $\mathcal D^1$.

\begin{figure}[H]
	\centering
	\begin{subfigure}[b]{0.51\textwidth}
		\includegraphics[width=0.95\textwidth]{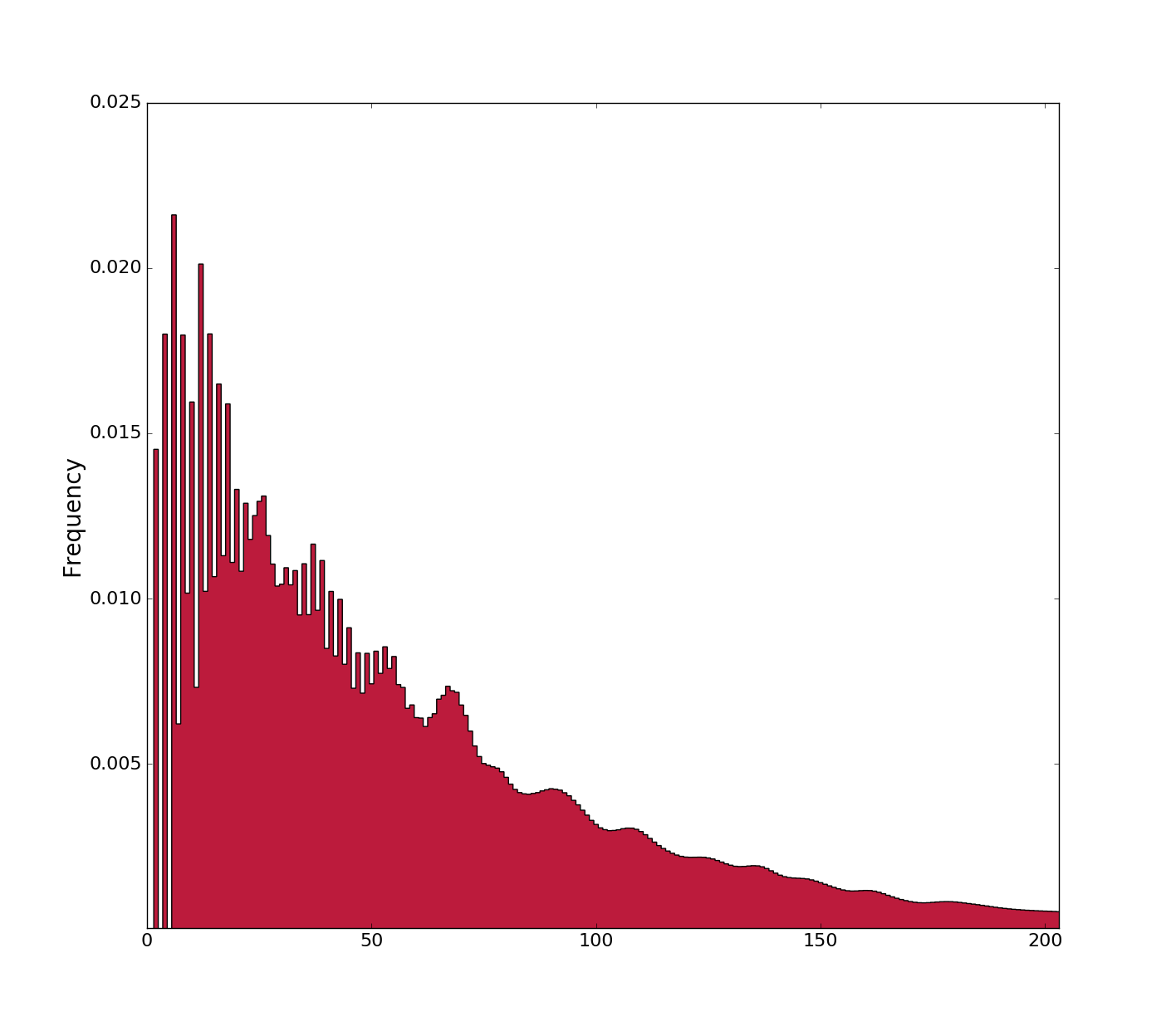}
		\caption{$\mathcal D_k^1=L_\infty(p_{k+1})-L_\infty (p_k)$}
		\label{fig:Hist_DD1_1e12}
	\end{subfigure}
	\begin{subfigure}[b]{0.48\textwidth}
		\includegraphics[width=\textwidth]{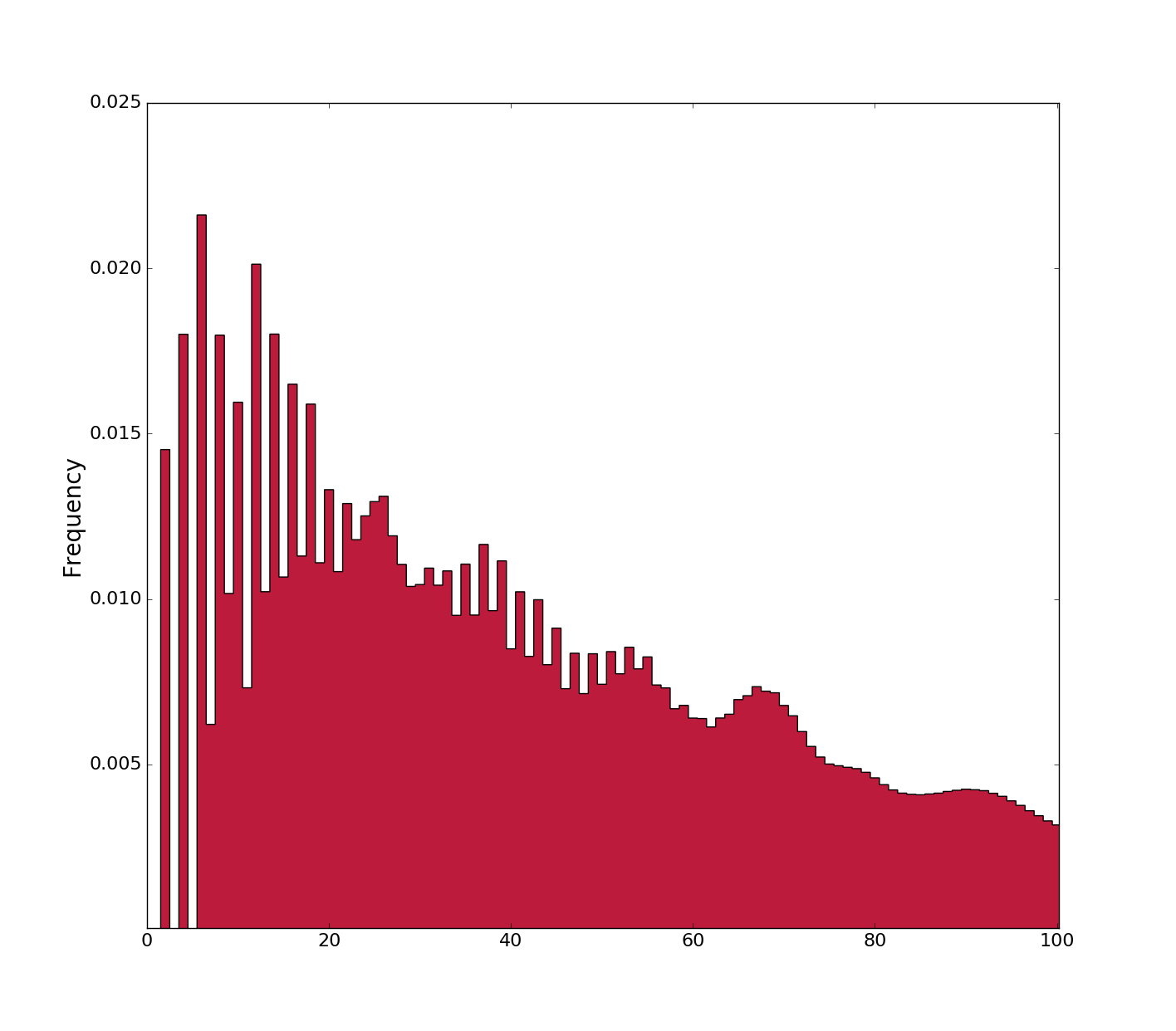}
	\caption{$\mathcal D_k^1$, close-up}
	\label{fig:Hist_DD1_1e12_zoom}
	\end{subfigure}
	\caption{Histogram of first order differences $\mathcal D_k^1$ between consecutive prime numbers on the number trail up to $N\leq 10^{12}$. }
	\label{fig:Histogram_DD1}
\end{figure}
Figure \ref{fig:Histogram_DD1} shows the histogram of $\mathcal D^1$ up to $N\leq 10^{12}$. The jumping champion at this point is likewise 6. As proved in Claim \ref{claim:D1not35}, the values 1,3 and 5 do not appear. However, visibly there are no other excluded values either, so we modify Polignac's conjecture to $\mathcal D^1_k$. 
\begin{conjecture}[Modified Polignac's Conjecture]
	$\mathcal D^1_k$ never takes the values 1,3 or 5, but for any other integer $N>0$, there are infinitely many $k$ such that $\mathcal D^1_k=N$.
\end{conjecture}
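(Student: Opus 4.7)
The statement decomposes naturally. That $\mathcal D^1_k$ never equals $3$ or $5$ is already Claim \ref{claim:D1not35}; that $\mathcal D^1_k=1$ holds only at $k=1$ is immediate from $\mathcal D^1_k\geq p_{k+1}-p_k\geq 2$ as soon as $p_k\geq 3$. All the content therefore lies in the positive half: every $N\geq 2$ with $N\notin\{3,5\}$ is attained by $\mathcal D^1_k$ for infinitely many $k$. Since even $N=2$ is already a strengthening of the twin prime conjecture (the twin primes being required to sandwich a squarefree number), no unconditional proof is in sight; what I would aim for is a clean conditional reduction to the Hardy--Littlewood prime $k$-tuples conjecture.

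The first step is to recast the problem combinatorially. With $g_k:=p_{k+1}-p_k$ and $a_i:=\|p_k+i\|_\infty$, extended by $a_0=a_{g_k}=1$,
\[
\mathcal D^1_k\;=\;\sum_{i=0}^{g_k-1}\max(a_i,a_{i+1}).
\]
Call a \emph{template} for $N$ a triple consisting of a gap length $g\geq 2$, a profile $(a_1,\ldots,a_{g-1})\in\mathbb N^{g-1}$ whose induced max-sum equals $N$, and a system of CRT-compatible congruences $p_k+i\equiv r_i\pmod{M_i}$ built from distinct prime powers, exactly as in \eqref{eq:CongSystem} and Table \ref{table:FindWordInOmega}, which pins $\|p_k+i\|_\infty\geq a_i$. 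The reverse inequality $\|p_k+i\|_\infty\leq a_i$ is enforced a posteriori by a standard power-free sieve that discards the sparse exceptional set on which some other prime power oversteps the prescribed bound.

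The second step is to exhibit an admissible template for each target $N\notin\{1,3,5\}$. For even $N=2n$ take $g=2$ and force $q^n\parallel (p_k+1)$ for an auxiliary odd prime $q$; then \eqref{eq:D^1twinprime} gives $\mathcal D^1_k=2n$. For odd $N\geq 7$ take $g=4$ and use the two families
\[
(a,a,a-1)\longmapsto 4a-1,\qquad (a,a+1,a-1)\longmapsto 4a+1\qquad (a\geq 2),
\]
which together cover every odd integer $\geq 7$ (the small cases $7,9,11,13$ agreeing with the explicit values in Table \ref{table:SmallValuesOfDs} and the example $(p_k,p_{k+1})=(43,47)$ from the introduction). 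Admissibility of $\{x,x+4\}$ modulo small primes forces $p_k\equiv 1\pmod 3$, so $3\mid p_k+2$, which fits naturally with using $q_2=3$ to supply the prime-power factor of $p_k+2$; the factors for $p_k+1$ and $p_k+3$ are provided by $2$ (exploiting that exactly one of them is $\equiv 0\pmod 4$) and a further auxiliary odd prime coprime to $3$.

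The third step is an application of the Hardy--Littlewood prime $k$-tuples conjecture to the two-element tuple $\{x,x+g\}$ restricted to the arithmetic progression cut out by the CRT congruences: the tuple is admissible, hence conjecturally has infinitely many prime specialisations, and the secondary sieve discards the sparse residual set on which the intermediate signatures overshoot the target.

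The main obstacle is, unsurprisingly, unconditionality: the $k$-tuples conjecture is open in every non-trivial case, and already the sub-case $g=2$ is the twin prime conjecture. A realistic weaker target within current reach of the Maynard--Tao machinery would be the statement that \emph{some} specified value of $N$ is attained infinitely often; but tuning that sieve to simultaneously (i) produce a bounded prime gap, (ii) prescribe the $\ell_\infty$-signature of every intermediate value, and (iii) exclude further primes in the gap, is in my view the genuinely hard technical step and the natural direction for any first unconditional progress on the conjecture.
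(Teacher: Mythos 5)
The statement you are addressing is a conjecture: the paper offers no proof of it, only the unconditional negative half (Claim \ref{claim:D1not35}, which rules out the values $3$ and $5$) together with the empirical histograms up to $10^{12}$ as evidence for the positive half. Your proposal therefore cannot be compared to a proof in the paper; what it can be judged as is a conditional reduction, and as such it is sound and genuinely goes beyond what the authors do. You correctly dispose of the excluded values by citing Claim \ref{claim:D1not35} and by noting that $\mathcal D^1_k\geq p_{k+1}-p_k\geq 2$ for $k\geq 2$, which also repairs the slight imprecision that $\mathcal D^1_1=1$ does occur once (Table \ref{table:SmallValuesOfDs}). Your template arithmetic checks out: for $g=2$ the identity \eqref{eq:D^1twinprime} gives all even values $2n$; for $g=4$ the profiles $(a,a,a-1)$ and $(a,a+1,a-1)$ yield $4a-1$ and $4a+1$ respectively, covering every odd $N\geq 7$, and the congruence bookkeeping (that $p_k\equiv 1\pmod 3$ forces $3\mid p_k+2$, and that exactly one of $p_k+1$, $p_k+3$ is divisible by $4$) is consistent with the paper's example $\mathcal D^1=7$ at $(43,47)$, where $44=2^2\cdot 11$, $45=3^2\cdot 5$, $46=2\cdot 23$.

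The one soft spot is the phrase ``discards the sparse exceptional set'' for enforcing the upper bounds $\|p_k+i\|_\infty\leq a_i$. The set of integers $x$ with $\|x+1\|_\infty>n$ has positive density ($1-1/\zeta(n+1)$), so it is not sparse a priori; it only becomes sparse after you fix residues modulo all prime powers $r^{a_i+1}$ with $r$ below a threshold $z$, leaving a tail of density $O(\sum_{r>z}r^{-2})$. Even then, to conclude that the conjectural twin-prime count in your progression is not swallowed by that tail you need a quantitative form of the Hardy--Littlewood conjecture, uniform in the modulus, not merely the qualitative infinitude statement. So the reduction is really to a stronger hypothesis than the bare $k$-tuples conjecture; this is standard in such heuristics but should be stated. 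With that caveat made explicit, your proposal is a legitimate and worthwhile strengthening of the paper's purely empirical case for the conjecture.
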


The histogram of $\mathcal D^2$ differences is shown in Fig. \ref{fig:Histogram_DD2} up to $N\leq 10^{12}$ with an additional close-up view (\textbf{\ref{fig:Hist_DD2_1e12_zoom}}). The semi-regular spikes seen on $D^1$ and $D^2$ histograms  are replaced by an intricate, non-repeating fine structure on the  $\mathcal D^1$ and $\mathcal D^2$ histograms with numerous, differently shaped local peaks. Most notably, as becomes visible in Fig. \textbf{\ref{fig:Hist_DD2_1e12_zoom}} the cap of the $\mathcal D^2$ histogram is not the highest peak, it has two tiny local maxima at $\pm1$, but there are two more, symmetrical and significantly higher maxima at $\pm5$ and $\pm7$.

\begin{figure}[H]
	\centering
	\begin{subfigure}[b]{0.51\textwidth}
		\includegraphics[width=0.95\textwidth]{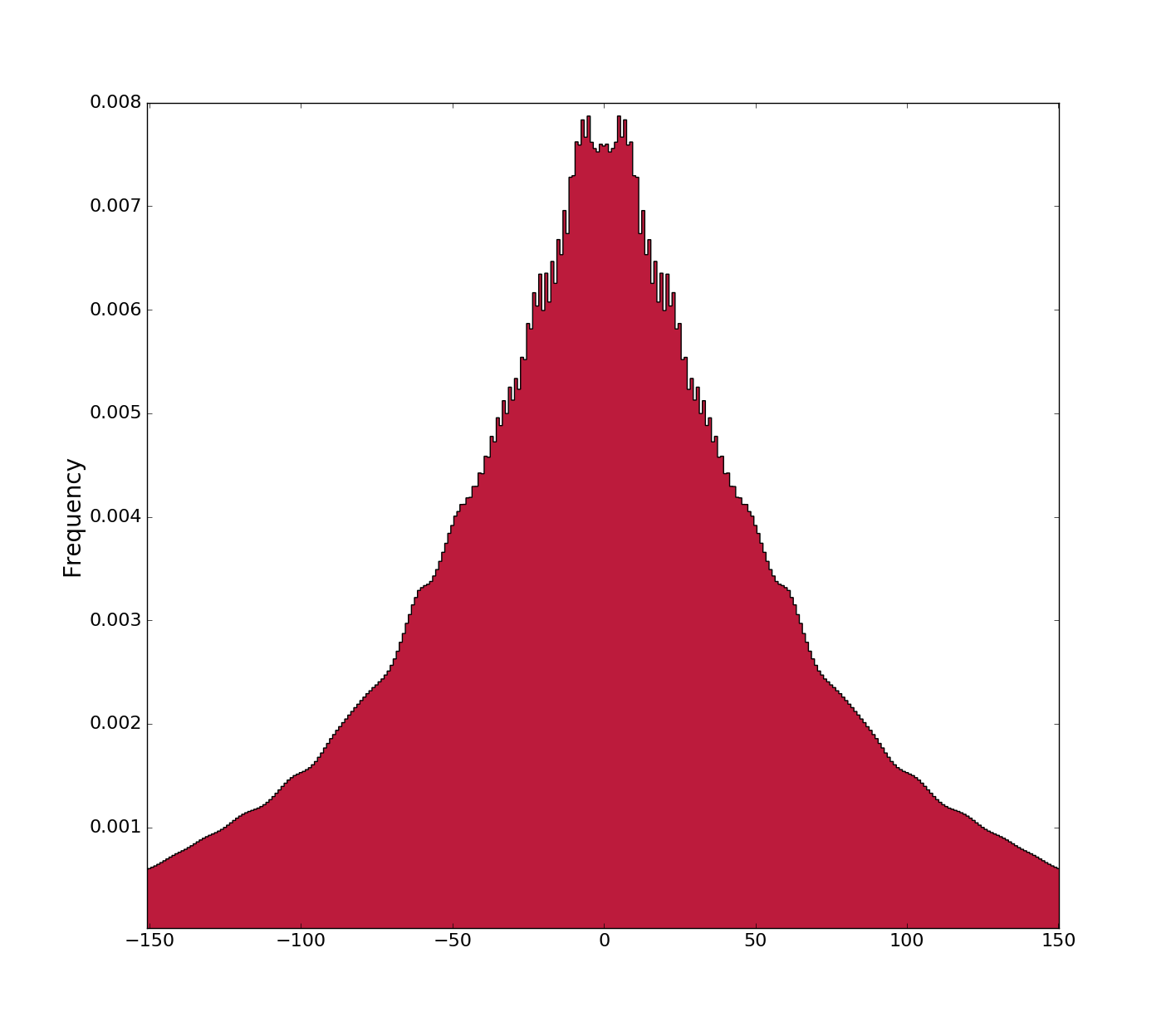}
		\caption{$\mathcal D_k^2=\mathcal D_{k+1}^1-\mathcal D_k^1$}
		\label{fig:Hist_DD2_1e12}
	\end{subfigure}
	\begin{subfigure}[b]{0.48\textwidth}
		\includegraphics[width=\textwidth]{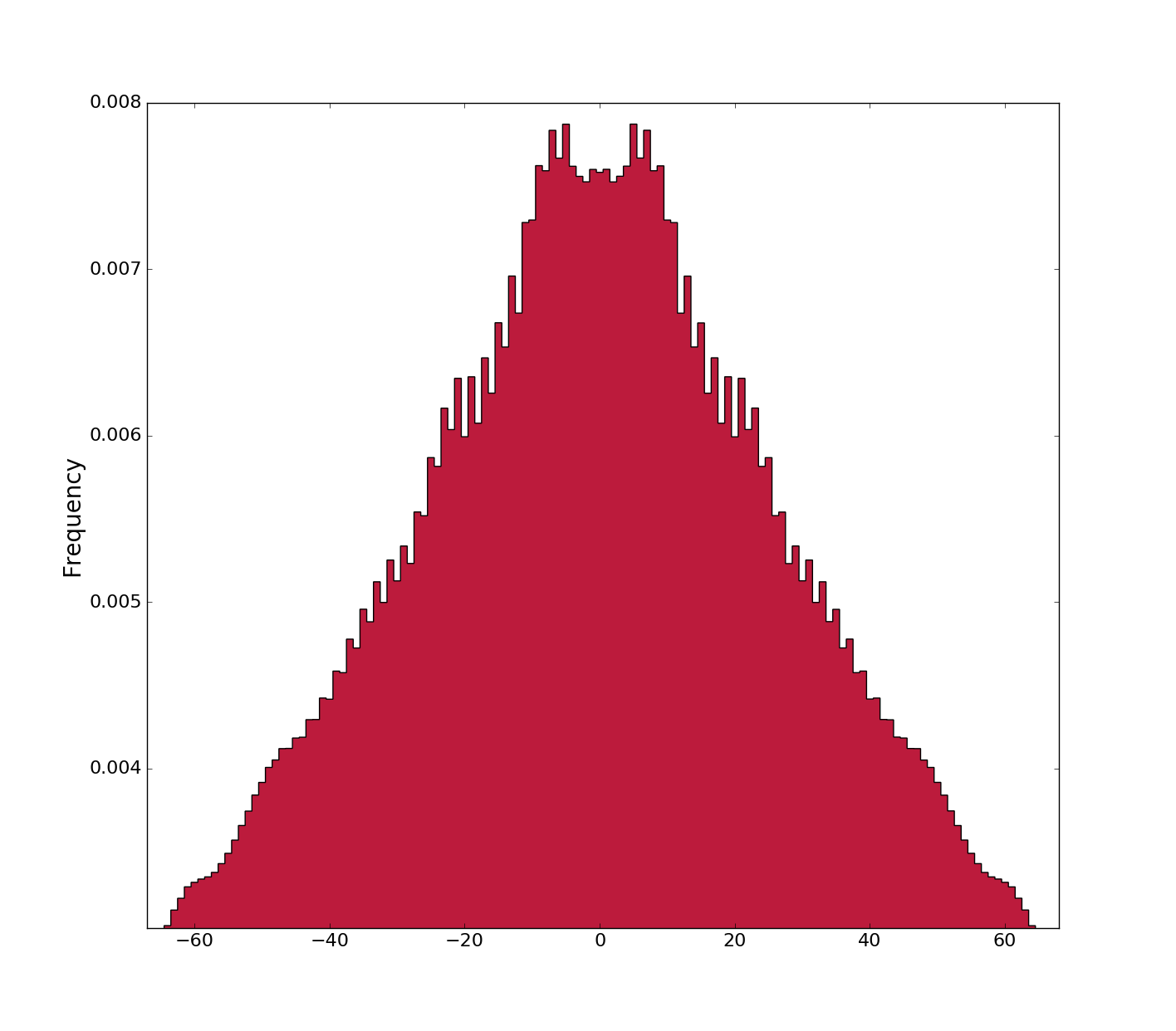}
		\caption{$\mathcal D_k^2$, close-up}
		\label{fig:Hist_DD2_1e12_zoom}
	\end{subfigure}
	\caption{Histogram of second order differences $\mathcal D_k^2$ between consecutive prime numbers on the number trail up to $N\leq 10^{12}$. }
	\label{fig:Histogram_DD2}
\end{figure}

A more detailed view of the fine structure of $\mathcal D^2$ histograms can be displayed on stacked, logarithmic plots shown in Fig. \ref{fig:Stacked_DD2}. 11 histograms, each with a tenfold increase in length $N\leq10^{2-12}$, are stacked from the front toward the back with increasing size. This representation highlights the differences between consecutive histograms and provides good visual evidence to suggest that the histogram of $\mathcal D^2$ decays exponentially at a rate which could converge to some specific number as $N\to\infty$. 

\begin{figure}[H]
	\centering
	\begin{subfigure}[b]{0.51\textwidth}
		\includegraphics[width=0.95\textwidth]{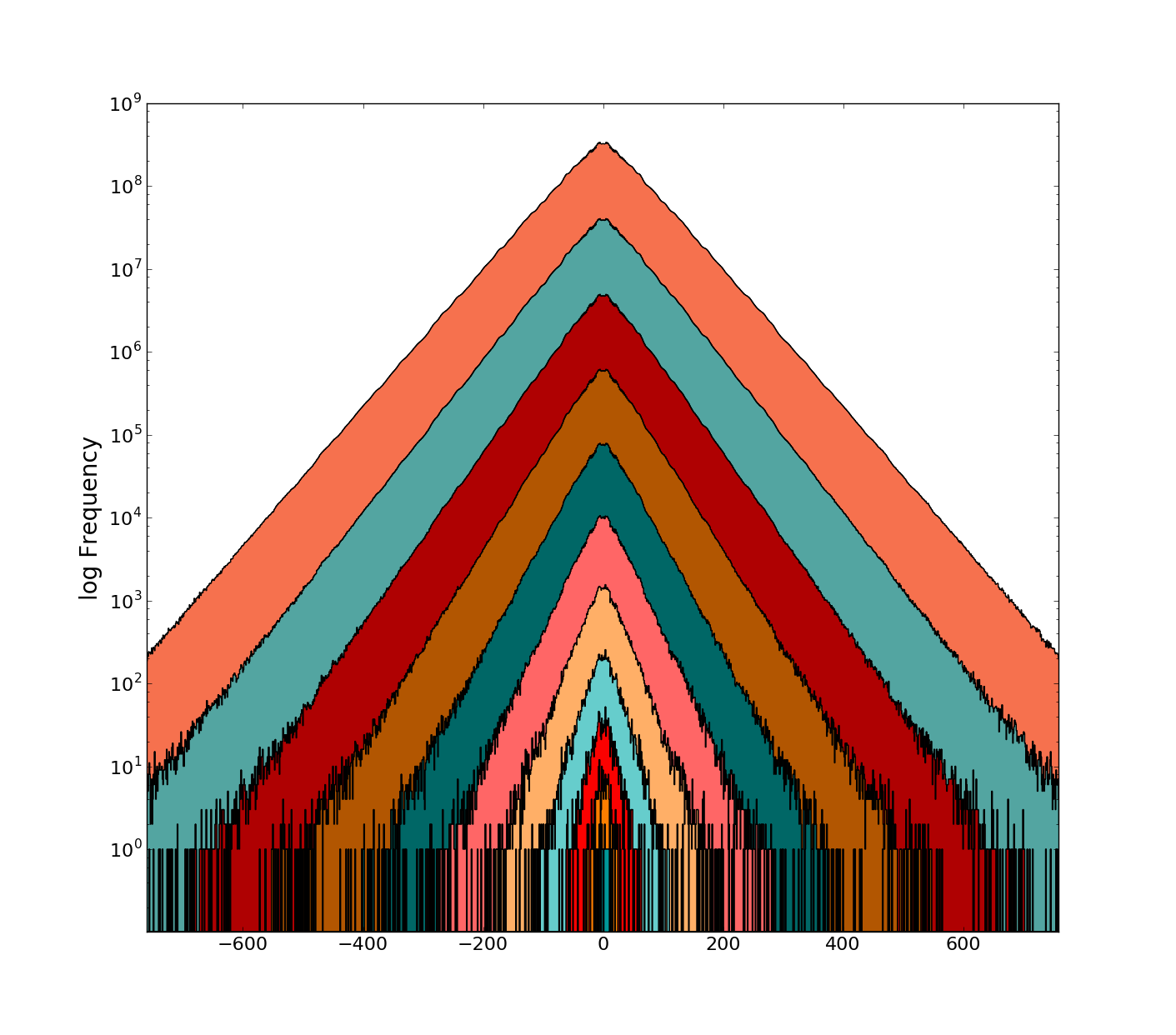}
		\caption{$\mathcal D_k^2=\mathcal D_{k+1}^1-\mathcal D_k^1$}
		\label{fig:Stacked_DD2_1e12}
	\end{subfigure}
	\begin{subfigure}[b]{0.48\textwidth}
		\includegraphics[width=\textwidth]{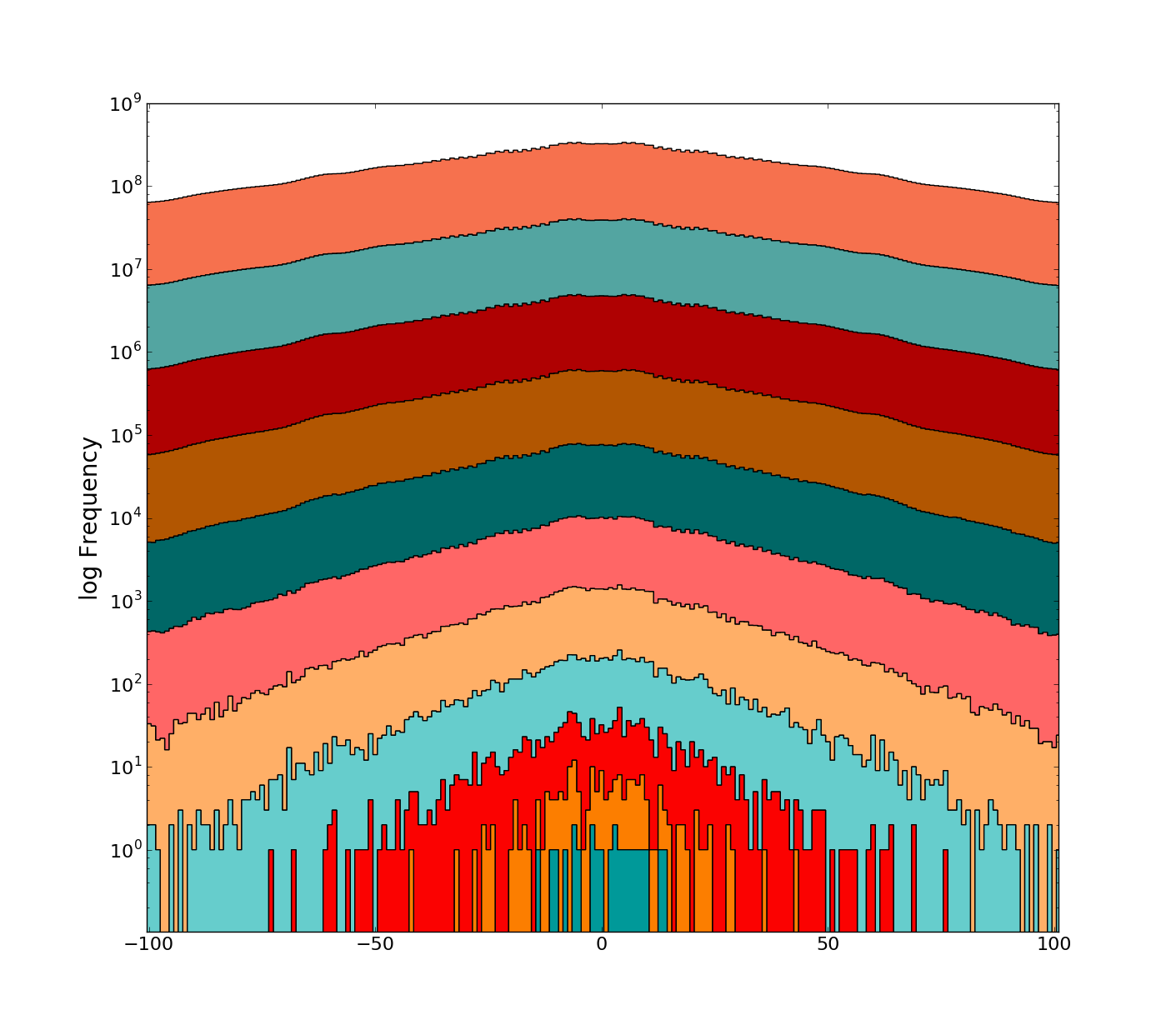}
		\caption{$\mathcal D_k^2$, close-up}
		\label{fig:Stacked_DD2_1e12_zoom}
	\end{subfigure}
	\caption{Stacked log-histograms of second order differences $\mathcal D_k^2$ between consecutive prime numbers on the number trail up to $N\leq 10^{12}$. }
	\label{fig:Stacked_DD2}
\end{figure}

\subsection*{Conclusion and open problems}

We introduced the prime grid to represent natural numbers with their unique prime signature. Using the $\ell_\infty$ norm for vectors, we defined the number trail, an analog of the number line. We studied the asymptotic growth and the distribution of primes on the number trail. We found that $L_{\infty}$ grows linearly at a specific rate $c_0$ based on our direct computations, which we supported by probabilistic methods. Furthermore, the prime gap functions of $\mathcal D^1$ and of $\mathcal D^2$ show interesting structure substantially different from the structure observed of $D^1$ and $D^2$. Our main aim was not necessarily to give precise proofs for all our claims, but rather to draw the attention towards our empirical findings that we consider interesting.

We conclude with a few questions that could be of interest. Most importantly, can there be some interplay between this setting and the traditional one? Is the sequence $\boldsymbol{\Omega}$ truly pseudorandom and does $\mathcal F$ describe all the forbidden words of $\boldsymbol{\Omega}$?

A more general question, given a subshift of finite type is it possible to characterize the set of all stationary distributions of the Markov chains compatible with the shift? Given a fixed distribution on the state space, can we decide whether it is the stationary distribution of a compatible Markov chain? If so, are there better analytical or optimization methods to find the Markov chain than we presented?
\noindent{\bf Acknowledgement.} The authors thank J\'anos Pintz for some useful discussions.

\bibliographystyle{abbrv}
\bibliography{prime_biblio}

\appendix

\section{C code for generating random sequences}\label{sec:Ccode}


The C code below was used to generate random sequences completely devoid of forbidden words as described in Subsection \ref{subsec:CompSimu}. The array indices are zero-based and the arrays are all initialized to zero. Given an input distribution $\mathbf p$ (see Table \ref{table:SimulatedDistributions}) a series of $f[i][j]$ truncated distributions are generated where the $i$ and $j$ indices refer to the $\mathbf p[i]$ and $\mathbf p[j]$ values, respectively. $f[i][j]$ consists of the ($\mathbf p[i]$,\ldots, $\mathbf p[j]$) slice of the original $\mathbf p$ distribution and it is normalized. The algorithm will choose the appropriate $f[i][j]$ distribution to sample according to the models of Subsection \ref{subsec:CompSimu}. The sampling function \textsl{ take\_a\_sample\_from( )} utilizes the GNU Scientific Library \cite{GNUlibrary} to efficently sample discrete distributions via the \textsl{gsl\_ran\_discrete\_preproc(~)} and \textsl{gsl\_ran\_discrete(~)} library functions. The output distribution $\Pi$ is computed from the values of the bin populations stored in the $\mathbf{counts}[~]$ array (line \ref{counts}) and the random sequence itself is stored in array $\mathbf{rand\_model}[~]$ (line \ref{rndmod}). In our nomenclature the random sequence consists of characters, represented by non-negative integers, and words are comprised of any number of consecutive characters. As explained in \eqref{eq:def_ForbiddenWords} certain words are forbidden in the random model and our algorithm below eliminates all these forbidden words.

The basic tenet of the algorithm is as follows. An $\mathbf{index[~]}$ array (line \ref{index}) keeps track of the then current lengths of growing words comprised of sets of (0), (0, 1), (0, 1, 2), etc.,  characters. For any given $x$ drawn in line \ref{x} there are two types of words, one whose character set contains $x$ and the other whose character set does not contain $x$. For those that contain $x$, appending $x$ to these words makes them potentially forbidden words depending on the current length of the word. The $\mathbf{index[~]}$ array keeps track of the lengths of different types of words and the test in line \ref{test} determines whether appending $x$ would complete a forbidden word, in which case a new $x$ must be drawn (lines \ref{model1}, and \ref{model2}). The former corresponds to Model~1 and the latter corresponds to Model~2 discussed in Subsection \ref{subsec:CompSimu}. When an appropriate $x$ has been drawn, every word type that does not contain the character $x$ can be reset by setting their $\mathbf{index[~]}$ array counter value to zero (line \ref{reset}). This means that for these word types, potentially forbidden words start growing again from scratch. Finally, the $\mathbf{counts[~]}$ array and the random model itself are both updated, and the new value of the largest character allowed by the current sample size, $\mathbf{max\_char}$ is determined (lines \ref{counts}, \ref{rndmod} and \ref{maxch}, respectively ).

\lstset{
	language=C,
	showstringspaces=false,
	keywordstyle=\color{black}\bfseries,
	commentstyle=\itshape,
	basicstyle={\ttfamily,\small},
	stringstyle=\slshape,
	numbers=left, numberstyle=\tiny,
	emph={floor, log, take_a_sample_from},
	emphstyle=\textbf,
	breaklines,
	breakatwhitespace,
	breakindent=20pt,
	escapeinside={(*@}{@*)}
}
\begin{lstlisting}
for( i=0, max_char=0; i<n_samples; ) {

/* take a sample, x is bin index, 0 <= x <= max_char */
x = take_a_sample_from( f[0][max_char] );   (*@\label{x}@*) 

/* ignore forbidden words longer than current sample */
for( k=max_char; k>=x; k-- ) {

/* for word type >= x, 
x extends forbidden word in the making, increment index */
++index[k];    (*@\label{index}@*)                       

/* forbidden word (for type k) appeared, fix last character */
if( index[k] >= (1 << (k+2)) ) {  /* 2^(k+2) */  (*@\label{test}@*)

/* try a new x */
x = k+1 + take_a_sample_from( f[k+1][max_char] );  /* MODEL 1 */ (*@\label{model1}@*)
//x = k+1;                                           /* MODEL 2 */  (*@\label{model2}@*)
break;
}
}

/* for word type < x,
x terminates forbidden word in the making */
for( k=0; k<x; k++ ) {
index[k] = 0;  (*@\label{reset}@*)
}

++counts[x];        /* incerement bin population */  (*@\label{counts}@*)
rand_model[i] = x;  /* append new x char to random model */  (*@\label{rndmod}@*)

/* max allowed char in current sample */
max_char = (int)(floor(log((double)(( ++i ) +1)) /log(2.)) -1.);  (*@\label{maxch}@*)
}
\end{lstlisting}

\section{Tabulated histograms}\label{sec:Hist_data}
\begin{footnotesize}
\begin{center}
\begin{longtable}{r|rrrrrrrrrrr}
\captionsetup{justification=centering}
\caption{Numerical data of  histograms of  $\mathcal D^1$ differences in column one for $N\leq10^{2-12}$ tabulated in columns 2-12}\label{tab:Hist_DD1_1e2-12}\\
\toprule
\multicolumn{1}{c|}{$\mathcal D^1$}  & \multicolumn{1}{c}{$10^{12}$}  & \multicolumn{1}{c}{$10^{11}$}  & \multicolumn{1}{c}{$10^{10}$}  & \multicolumn{1}{c}{$10^{9}$}  & \multicolumn{1}{c}{$10^{8}$}  & \multicolumn{1}{c}{$10^{7}$}  & \multicolumn{1}{c}{$10^{6}$}  & \multicolumn{1}{c}{$10^{5}$}  & \multicolumn{1}{c}{$10^{4}$}  & \multicolumn{1}{c}{$10^{3}$}  & \multicolumn{1}{c}{$10^{2}$}  \\ \midrule
\endfirsthead
\multicolumn{12}{c}{\tablename\ \thetable\ -- \textit{Continued from previous page}} \\ \toprule
\multicolumn{1}{c|}{$\mathcal D^1$}  & \multicolumn{1}{c}{$10^{12}$}  & \multicolumn{1}{c}{$10^{11}$}  & \multicolumn{1}{c}{$10^{10}$}  & \multicolumn{1}{c}{$10^{9}$}  & \multicolumn{1}{c}{$10^{8}$}  & \multicolumn{1}{c}{$10^{7}$}  & \multicolumn{1}{c}{$10^{6}$}  & \multicolumn{1}{c}{$10^{5}$}  & \multicolumn{1}{c}{$10^{4}$}  & \multicolumn{1}{c}{$10^{3}$}  & \multicolumn{1}{c}{$10^{2}$}  \\ \midrule
\endhead
\multicolumn{12}{r}{\textit{\small{Continued on next page}}} \\
\endfoot
\endlastfoot
\rowcolor[HTML]{E8E8E8} 
1  & 1         & 1        & 1        & 1       & 1      & 1     & 1    & 1   & 1  & 1  & 1 \\
2  & 545836959 & 65482891 & 8000621  & 999952  & 128678 & 17238 & 2352 & 358 & 63 & 12 & 3 \\
\rowcolor[HTML]{E8E8E8} 
3  & 0         & 0        & 0        & 0       & 0      & 0     & 0    & 0   & 0  & 0  & 0 \\
4  & 676962502 & 81197873 & 9918050  & 1238662 & 159572 & 21273 & 2962 & 460 & 83 & 14 & 4 \\
\rowcolor[HTML]{E8E8E8} 
5  & 0         & 0        & 0        & 0       & 0      & 0     & 0    & 0   & 0  & 0  & 0 \\
6  & 812569885 & 97460711 & 11910458 & 1489205 & 191644 & 25567 & 3508 & 519 & 86 & 17 & 3 \\
\rowcolor[HTML]{E8E8E8} 
7  & 233485333 & 28008641 & 3421613  & 427267  & 54555  & 7218  & 1024 & 151 & 24 & 3  & 1 \\
8  & 675924191 & 80872118 & 9851261  & 1225160 & 156491 & 20793 & 2882 & 439 & 70 & 13 & 1 \\
\rowcolor[HTML]{E8E8E8} 
9  & 382285674 & 45657562 & 5551012  & 689857  & 88083  & 11685 & 1644 & 241 & 34 & 10 & 2 \\
10 & 599704882 & 71643947 & 8701213  & 1078829 & 137483 & 18181 & 2481 & 353 & 52 & 9  & 1 \\
\rowcolor[HTML]{E8E8E8} 
11 & 275043047 & 32785324 & 3977086  & 492050  & 62471  & 8189  & 1152 & 163 & 30 & 0  & 0 \\
12 & 756641841 & 90135360 & 10920595 & 1351869 & 171761 & 22303 & 3026 & 420 & 63 & 4  & 1 \\
\rowcolor[HTML]{E8E8E8} 
13 & 384294766 & 45758060 & 5540132  & 683669  & 86297  & 11231 & 1547 & 239 & 30 & 6  & 1 \\
14 & 677173536 & 80473232 & 9718049  & 1196206 & 150599 & 19575 & 2670 & 369 & 63 & 9  & 1 \\
\rowcolor[HTML]{E8E8E8} 
15 & 401018460 & 47543338 & 5723867  & 703177  & 88375  & 11521 & 1513 & 237 & 37 & 5  & 0 \\
16 & 620181299 & 73474151 & 8841576  & 1083654 & 135551 & 17349 & 2271 & 311 & 49 & 13 & 3 \\
\rowcolor[HTML]{E8E8E8} 
17 & 424850184 & 50195576 & 6020340  & 735419  & 91699  & 11709 & 1583 & 224 & 27 & 4  & 1 \\
18 & 597556733 & 70592210 & 8465897  & 1033639 & 128604 & 16478 & 2171 & 293 & 39 & 3  & 0 \\
\rowcolor[HTML]{E8E8E8} 
19 & 417182685 & 49185886 & 5882046  & 715767  & 88879  & 11340 & 1491 & 199 & 22 & 6  & 0 \\
20 & 500276741 & 58952639 & 7052759  & 857727  & 106288 & 13622 & 1822 & 248 & 35 & 8  & 1 \\
\rowcolor[HTML]{E8E8E8} 
21 & 407144642 & 47774244 & 5683277  & 686540  & 84352  & 10560 & 1395 & 181 & 27 & 5  &   \\
22 & 484516599 & 56852120 & 6763219  & 816597  & 100701 & 12564 & 1623 & 198 & 28 & 3  &   \\
\rowcolor[HTML]{E8E8E8} 
23 & 443439269 & 51900616 & 6153871  & 740041  & 90225  & 11240 & 1434 & 199 & 19 & 1  &   \\
24 & 470479351 & 55067706 & 6528288  & 784364  & 96024  & 12067 & 1551 & 198 & 26 & 1  &   \\
\rowcolor[HTML]{E8E8E8} 
25 & 486683648 & 56773666 & 6700619  & 802366  & 97486  & 11964 & 1418 & 158 & 18 & 2  &   \\
26 & 492742414 & 57478966 & 6784519  & 812971  & 98737  & 12325 & 1526 & 196 & 30 & 3  &   \\
\rowcolor[HTML]{E8E8E8} 
27 & 447830032 & 52126930 & 6134631  & 731493  & 88573  & 11033 & 1263 & 142 & 19 & 1  &   \\
28 & 415327048 & 48370523 & 5701776  & 680946  & 82323  & 10102 & 1263 & 165 & 19 & 2  &   \\
\rowcolor[HTML]{E8E8E8} 
29 & 390303895 & 45329992 & 5320982  & 630812  & 76393  & 9259  & 1153 & 141 & 14 & 1  &   \\
30 & 392437767 & 45575573 & 5347835  & 635587  & 76130  & 9320  & 1167 & 133 & 21 & 3  &   \\
\rowcolor[HTML]{E8E8E8} 
31 & 411121720 & 47566224 & 5555827  & 655573  & 78270  & 9417  & 1109 & 145 & 15 & 0  &   \\
32 & 391706876 & 45315055 & 5292173  & 625088  & 74690  & 8998  & 1063 & 119 & 8  & 2  &   \\
\rowcolor[HTML]{E8E8E8} 
33 & 407916113 & 47009979 & 5467687  & 641287  & 75646  & 8971  & 1057 & 117 & 10 & 1  &   \\
34 & 357435898 & 41178422 & 4788218  & 561466  & 66411  & 7902  & 969  & 127 & 15 & 0  &   \\
\rowcolor[HTML]{E8E8E8} 
35 & 415666860 & 47710551 & 5519737  & 643360  & 75712  & 9030  & 1089 & 120 & 9  & 2  &   \\
36 & 357761163 & 41026033 & 4741553  & 552125  & 64897  & 7588  & 901  & 95  & 6  & 1  &   \\
\rowcolor[HTML]{E8E8E8} 
37 & 438008056 & 50053582 & 5759940  & 666505  & 77507  & 8914  & 1032 & 109 & 7  & 0  &   \\
38 & 362734703 & 41403690 & 4757877  & 548943  & 63884  & 7324  & 856  & 110 & 11 & 0  &   \\
\rowcolor[HTML]{E8E8E8} 
39 & 419338332 & 47753944 & 5473818  & 631038  & 72661  & 8342  & 906  & 88  & 12 & 0  &   \\
40 & 319372932 & 36349710 & 4161936  & 478868  & 55499  & 6364  & 687  & 72  & 9  & 1  &   \\
\rowcolor[HTML]{E8E8E8} 
41 & 384186364 & 43587533 & 4972936  & 570258  & 65417  & 7486  & 844  & 92  & 6  & 0  &   \\
42 & 310770326 & 35210712 & 4008763  & 458211  & 52511  & 5895  & 656  & 69  & 6  & 0  &   \\
\rowcolor[HTML]{E8E8E8} 
43 & 375151082 & 42408631 & 4809769  & 548333  & 62368  & 7088  & 800  & 83  & 6  & 0  &   \\
44 & 301328314 & 34025495 & 3854173  & 438680  & 50067  & 5531  & 665  & 72  & 6  & 0  &   \\
\rowcolor[HTML]{E8E8E8} 
45 & 342858056 & 38657690 & 4375235  & 495639  & 55826  & 6342  & 710  & 74  & 7  & 0  &   \\
46 & 274048457 & 30861067 & 3483910  & 394841  & 44666  & 4925  & 534  & 58  & 4  & 0  &   \\
\rowcolor[HTML]{E8E8E8} 
47 & 314323046 & 35331031 & 3980879  & 448389  & 50260  & 5599  & 625  & 65  & 5  & 0  &   \\
48 & 268461025 & 30128434 & 3390308  & 381310  & 42656  & 4740  & 519  & 50  & 2  & 0  &   \\
\rowcolor[HTML]{E8E8E8} 
49 & 313748331 & 35145253 & 3944099  & 442484  & 49301  & 5395  & 602  & 67  & 4  & 1  &   \\
50 & 279059340 & 31197050 & 3493274  & 390264  & 43288  & 4677  & 502  & 43  & 3  &    &   \\
\rowcolor[HTML]{E8E8E8} 
51 & 316192097 & 35270467 & 3938108  & 438494  & 48632  & 5216  & 555  & 54  & 2  &    &   \\
52 & 290982321 & 32393863 & 3612103  & 401777  & 44226  & 4687  & 490  & 39  & 4  &    &   \\
\rowcolor[HTML]{E8E8E8} 
53 & 321173394 & 35704308 & 3971568  & 440735  & 48442  & 5238  & 546  & 46  & 4  &    &   \\
54 & 296555755 & 32916809 & 3654866  & 404294  & 44375  & 4865  & 487  & 50  & 1  &    &   \\
\rowcolor[HTML]{E8E8E8} 
55 & 310144069 & 34377195 & 3805054  & 419457  & 46030  & 4780  & 485  & 38  & 4  &    &   \\
56 & 278244969 & 30786869 & 3402596  & 375360  & 40765  & 4250  & 377  & 23  & 1  &    &   \\
\rowcolor[HTML]{E8E8E8} 
57 & 274938973 & 30365686 & 3349189  & 367275  & 39855  & 4219  & 452  & 36  & 5  &    &   \\
58 & 251238297 & 27693610 & 3046692  & 332896  & 35716  & 3799  & 390  & 23  & 3  &    &   \\
\rowcolor[HTML]{E8E8E8} 
59 & 255057585 & 28064094 & 3078949  & 334786  & 35881  & 3707  & 364  & 26  & 1  &    &   \\
60 & 240625481 & 26406855 & 2893001  & 314572  & 33529  & 3453  & 356  & 27  & 4  &    &   \\
\rowcolor[HTML]{E8E8E8} 
61 & 240098350 & 26312891 & 2874685  & 311734  & 33303  & 3428  & 317  & 25  & 0  &    &   \\
62 & 230501322 & 25221244 & 2747421  & 296466  & 31590  & 3203  & 317  & 32  & 2  &    &   \\
\rowcolor[HTML]{E8E8E8} 
63 & 240699750 & 26287308 & 2858713  & 308409  & 32549  & 3323  & 281  & 17  & 1  &    &   \\
64 & 245034155 & 26679075 & 2893017  & 309735  & 32703  & 3363  & 310  & 34  & 1  &    &   \\
\rowcolor[HTML]{E8E8E8} 
65 & 261624047 & 28405280 & 3071254  & 328501  & 34295  & 3350  & 309  & 32  & 3  &    &   \\
66 & 266059296 & 28835866 & 3106800  & 331091  & 34325  & 3517  & 320  & 23  & 1  &    &   \\
\rowcolor[HTML]{E8E8E8} 
67 & 276300134 & 29870623 & 3211217  & 341332  & 35612  & 3414  & 301  & 24  & 1  &    &   \\
68 & 271214452 & 29261959 & 3136401  & 331350  & 34448  & 3383  & 321  & 20  & 0  &    &   \\
\rowcolor[HTML]{E8E8E8} 
69 & 269383864 & 29009679 & 3101086  & 327085  & 33620  & 3299  & 272  & 13  & 0  &    &   \\
70 & 254946493 & 27403933 & 2922126  & 306593  & 31439  & 3079  & 282  & 23  & 2  &    &   \\
\rowcolor[HTML]{E8E8E8} 
71 & 243206891 & 26090253 & 2773428  & 290862  & 29670  & 3004  & 241  & 21  & 1  &    &   \\
72 & 225296489 & 24122260 & 2558040  & 267104  & 27016  & 2579  & 232  & 14  & 2  &    &   \\
\rowcolor[HTML]{E8E8E8} 
73 & 208369696 & 22276198 & 2361046  & 246199  & 24927  & 2467  & 223  & 20  & 0  &    &   \\
74 & 196338136 & 20936263 & 2209352  & 229402  & 23131  & 2200  & 198  & 12  & 0  &    &   \\
\rowcolor[HTML]{E8E8E8} 
75 & 188391164 & 20046979 & 2110189  & 218518  & 21830  & 1943  & 156  & 10  & 0  &    &   \\
76 & 186442645 & 19783796 & 2074672  & 213678  & 21235  & 1923  & 166  & 10  & 2  &    &   \\
\rowcolor[HTML]{E8E8E8} 
77 & 184682531 & 19558506 & 2046017  & 208819  & 20672  & 1945  & 179  & 12  & 1  &    &   \\
78 & 183172563 & 19350290 & 2022416  & 207385  & 20373  & 1881  & 148  & 7   & 0  &    &   \\
\rowcolor[HTML]{E8E8E8} 
79 & 178996142 & 18878770 & 1964451  & 200764  & 19730  & 1845  & 135  & 9   & 0  &    &   \\
80 & 172704652 & 18184358 & 1890852  & 192546  & 18867  & 1728  & 138  & 7   & 0  &    &   \\ \bottomrule
\end{longtable}
\end{center}
\end{footnotesize}

\begin{footnotesize}
\begin{center}
\begin{longtable}{r|rrrrrrrrrrr}
\captionsetup{justification=centering}
\caption{Numerical data of  histograms of  $\mathcal D^2$ differences in column one for $N\leq10^{2-12}$ tabulated in columns 2-12}\label{tab:Hist_DD2_1e2-12}\\
\toprule
\multicolumn{1}{c|}{$\mathcal D^2$}  & \multicolumn{1}{c}{$10^{12}$}  & \multicolumn{1}{c}{$10^{11}$}  & \multicolumn{1}{c}{$10^{10}$}  & \multicolumn{1}{c}{$10^{9}$}  & \multicolumn{1}{c}{$10^{8}$}  & \multicolumn{1}{c}{$10^{7}$}  & \multicolumn{1}{c}{$10^{6}$}  & \multicolumn{1}{c}{$10^{5}$}  & \multicolumn{1}{c}{$10^{4}$}  & \multicolumn{1}{c}{$10^{3}$}  & \multicolumn{1}{c}{$10^{2}$}  \\ \midrule
\endfirsthead
\multicolumn{12}{c}{\tablename\ \thetable\ -- \textit{Continued from previous page}} \\ \toprule
\multicolumn{1}{c|}{$\mathcal D^2$}  & \multicolumn{1}{c}{$10^{12}$}  & \multicolumn{1}{c}{$10^{11}$}  &  \multicolumn{1}{c}{$10^{10}$}  & \multicolumn{1}{c}{$10^{9}$}  & \multicolumn{1}{c}{$10^{8}$}  & \multicolumn{1}{c}{$10^{7}$}  & \multicolumn{1}{c}{$10^{6}$}  & \multicolumn{1}{c}{$10^{5}$}  & \multicolumn{1}{c}{$10^{4}$}  & \multicolumn{1}{c}{$10^{3}$}  & \multicolumn{1}{c}{$10^{2}$}  \\ \midrule
\endhead
\multicolumn{12}{r}{\textit{\small{Continued on next page}}} \\
\endfoot
\endlastfoot
\rowcolor[HTML]{E8E8E8} 
-60 & 124678820 & 13617925 & 1479775 & 159214 & 16731 & 1719 & 140  & 9   & 2  &    &   \\
-59 & 125417281 & 13698994 & 1493730 & 161701 & 17398 & 1755 & 162  & 20  & 3  &    &   \\
\rowcolor[HTML]{E8E8E8} 
-58 & 125911011 & 13764630 & 1500292 & 162002 & 17011 & 1657 & 172  & 18  & 0  &    &   \\
-57 & 126923083 & 13894104 & 1516473 & 164037 & 17363 & 1785 & 182  & 18  & 0  &    &   \\
\rowcolor[HTML]{E8E8E8} 
-56 & 128904390 & 14133007 & 1547182 & 168320 & 18180 & 1859 & 172  & 20  & 1  &    &   \\
-55 & 131234765 & 14416090 & 1578825 & 173076 & 18466 & 1934 & 185  & 14  & 0  &    &   \\
\rowcolor[HTML]{E8E8E8} 
-54 & 134222294 & 14782109 & 1626127 & 177965 & 19312 & 2032 & 193  & 16  & 1  &    &   \\
-53 & 137524028 & 15185420 & 1676289 & 183678 & 19652 & 2153 & 231  & 15  & 1  &    &   \\
\rowcolor[HTML]{E8E8E8} 
-52 & 140779422 & 15571660 & 1718999 & 188928 & 20752 & 2206 & 201  & 11  & 1  &    &   \\
-51 & 144402779 & 16017844 & 1776828 & 196358 & 21781 & 2337 & 214  & 21  & 4  &    &   \\
\rowcolor[HTML]{E8E8E8} 
-50 & 147259221 & 16369755 & 1820487 & 201629 & 21894 & 2394 & 240  & 14  & 0  &    &   \\
-49 & 150647376 & 16780566 & 1870272 & 208691 & 23276 & 2460 & 258  & 21  & 1  &    &   \\
\rowcolor[HTML]{E8E8E8} 
-48 & 152336894 & 16989890 & 1895485 & 210985 & 23352 & 2546 & 266  & 22  & 2  &    &   \\
-47 & 154896930 & 17306325 & 1936930 & 217237 & 24137 & 2622 & 272  & 30  & 1  &    &   \\
\rowcolor[HTML]{E8E8E8} 
-46 & 154945318 & 17317334 & 1937210 & 216378 & 23987 & 2600 & 277  & 23  & 1  &    &   \\
-45 & 157288655 & 17615557 & 1976344 & 221746 & 24731 & 2679 & 281  & 23  & 4  &    &   \\
\rowcolor[HTML]{E8E8E8} 
-44 & 157474057 & 17640425 & 1980331 & 222635 & 24737 & 2670 & 265  & 25  & 1  &    &   \\
-43 & 161440665 & 18131460 & 2041299 & 230497 & 25939 & 2801 & 319  & 34  & 3  &    &   \\
\rowcolor[HTML]{E8E8E8} 
-42 & 161477126 & 18139987 & 2042247 & 230677 & 26217 & 2963 & 323  & 35  & 4  & 1  &   \\
-41 & 166326919 & 18758031 & 2124063 & 240927 & 27317 & 3107 & 338  & 41  & 5  & 0  &   \\
\rowcolor[HTML]{E8E8E8} 
-40 & 166066810 & 18715681 & 2117644 & 240503 & 27342 & 2987 & 352  & 38  & 2  & 0  &   \\
-39 & 172416970 & 19515444 & 2219822 & 253386 & 28718 & 3232 & 321  & 34  & 2  & 0  &   \\
\rowcolor[HTML]{E8E8E8} 
-38 & 172061911 & 19471804 & 2214742 & 252503 & 28497 & 3286 & 363  & 37  & 3  & 0  &   \\
-37 & 179636293 & 20424829 & 2335566 & 268516 & 30951 & 3562 & 378  & 45  & 2  & 0  &   \\
\rowcolor[HTML]{E8E8E8} 
-36 & 177640738 & 20172741 & 2304548 & 264324 & 30438 & 3401 & 382  & 42  & 4  & 0  &   \\
-35 & 186402059 & 21262299 & 2443733 & 282221 & 32943 & 3845 & 429  & 56  & 7  & 0  &   \\
\rowcolor[HTML]{E8E8E8} 
-34 & 183557880 & 20917244 & 2399966 & 277126 & 32006 & 3774 & 441  & 50  & 3  & 0  &   \\
-33 & 192595098 & 22050424 & 2542567 & 295787 & 34423 & 4026 & 458  & 50  & 6  & 0  &   \\
\rowcolor[HTML]{E8E8E8} 
-32 & 187933156 & 21473919 & 2470177 & 286212 & 33446 & 3806 & 454  & 56  & 7  & 1  &   \\
-31 & 197513541 & 22680026 & 2625029 & 305443 & 35984 & 4232 & 477  & 56  & 7  & 0  &   \\
\rowcolor[HTML]{E8E8E8} 
-30 & 192806659 & 22085709 & 2551227 & 296524 & 35018 & 4006 & 466  & 46  & 7  & 0  &   \\
-29 & 200679723 & 23085997 & 2679220 & 313792 & 37043 & 4416 & 538  & 61  & 6  & 0  &   \\
\rowcolor[HTML]{E8E8E8} 
-28 & 196747898 & 22596987 & 2618029 & 306978 & 35944 & 4255 & 526  & 68  & 14 & 1  &   \\
-27 & 208373342 & 24066755 & 2807751 & 331861 & 39362 & 4796 & 620  & 66  & 6  & 0  &   \\
\rowcolor[HTML]{E8E8E8} 
-26 & 207524517 & 23975103 & 2798797 & 330124 & 39711 & 4907 & 595  & 71  & 9  & 2  &   \\
-25 & 220660508 & 25634863 & 3011643 & 358793 & 43228 & 5235 & 652  & 72  & 12 & 1  &   \\
\rowcolor[HTML]{E8E8E8} 
-24 & 218679154 & 25398221 & 2985061 & 355705 & 42957 & 5207 & 683  & 95  & 13 & 2  &   \\
-23 & 231838351 & 27072426 & 3201247 & 383263 & 46861 & 5759 & 698  & 93  & 10 & 0  &   \\
\rowcolor[HTML]{E8E8E8} 
-22 & 227001776 & 26473195 & 3125624 & 374095 & 45595 & 5669 & 717  & 72  & 8  & 0  &   \\
-21 & 238581530 & 27942303 & 3315655 & 399252 & 49078 & 6132 & 772  & 93  & 9  & 0  &   \\
\rowcolor[HTML]{E8E8E8} 
-20 & 225379188 & 26275304 & 3104488 & 371187 & 45314 & 5658 & 734  & 101 & 12 & 1  &   \\
-19 & 238927515 & 27983835 & 3319062 & 400185 & 49312 & 6217 & 745  & 98  & 12 & 4  &   \\
\rowcolor[HTML]{E8E8E8} 
-18 & 228438575 & 26674122 & 3153715 & 379285 & 46182 & 5764 & 753  & 99  & 14 & 1  &   \\
-17 & 243203220 & 28525498 & 3391278 & 410922 & 50370 & 6350 & 821  & 124 & 21 & 2  &   \\
\rowcolor[HTML]{E8E8E8} 
-16 & 235237016 & 27535204 & 3268627 & 393297 & 48604 & 6120 & 770  & 113 & 20 & 1  &   \\
-15 & 251141295 & 29529930 & 3521444 & 425681 & 52563 & 6507 & 855  & 108 & 13 & 0  &   \\
\rowcolor[HTML]{E8E8E8} 
-14 & 245657729 & 28854709 & 3434713 & 415428 & 51396 & 6518 & 811  & 115 & 17 & 3  & 1 \\
-13 & 261697331 & 30871436 & 3694997 & 449396 & 56052 & 7063 & 941  & 133 & 16 & 1  & 0 \\
\rowcolor[HTML]{E8E8E8} 
-12 & 253325086 & 29845788 & 3566511 & 433328 & 53567 & 6944 & 953  & 137 & 18 & 5  & 0 \\
-11 & 273796317 & 32442924 & 3903869 & 479799 & 59764 & 7739 & 1018 & 146 & 16 & 3  & 1 \\
\rowcolor[HTML]{E8E8E8} 
-10 & 274354895 & 32570409 & 3933248 & 483817 & 61117 & 7859 & 1090 & 158 & 22 & 3  & 1 \\
-9  & 286644016 & 34099599 & 4120679 & 508154 & 64219 & 8408 & 1124 & 162 & 25 & 5  & 0 \\
\rowcolor[HTML]{E8E8E8} 
-8  & 285492760 & 34048670 & 4132705 & 512306 & 65401 & 8539 & 1218 & 157 & 30 & 3  & 1 \\
-7  & 294625344 & 35172331 & 4274938 & 530226 & 67242 & 8802 & 1238 & 178 & 36 & 10 & 0 \\
\rowcolor[HTML]{E8E8E8} 
-6  & 288344140 & 34440842 & 4187603 & 519919 & 66531 & 8756 & 1258 & 179 & 32 & 10 & 2 \\
-5  & 296011555 & 35390375 & 4307092 & 534847 & 68057 & 9124 & 1258 & 161 & 29 & 4  & 1 \\
\rowcolor[HTML]{E8E8E8} 
-4  & 286503794 & 34243792 & 4167061 & 518758 & 66184 & 8826 & 1225 & 179 & 22 & 1  & 0 \\
-3  & 284220703 & 33864884 & 4102245 & 508054 & 64912 & 8449 & 1142 & 167 & 18 & 3  & 0 \\
\rowcolor[HTML]{E8E8E8} 
-2  & 282984904 & 33788571 & 4106387 & 509356 & 65280 & 8448 & 1188 & 181 & 28 & 8  & 2 \\
-1  & 285790536 & 34071252 & 4131348 & 511701 & 65077 & 8515 & 1203 & 163 & 20 & 4  & 1 \\
\rowcolor[HTML]{E8E8E8} 
0   & 285132051 & 34090579 & 4150019 & 517031 & 66320 & 8805 & 1190 & 170 & 23 & 8  & 1 \\
1   & 285816973 & 34079474 & 4134226 & 512001 & 64923 & 8454 & 1166 & 180 & 22 & 4  & 0 \\
\rowcolor[HTML]{E8E8E8} 
2   & 282965478 & 33790755 & 4105123 & 510733 & 65120 & 8741 & 1243 & 164 & 24 & 4  & 1 \\
3   & 284247018 & 33856898 & 4102916 & 508198 & 64138 & 8292 & 1171 & 182 & 29 & 5  & 2 \\
\rowcolor[HTML]{E8E8E8} 
4   & 286532237 & 34254862 & 4168340 & 517632 & 66368 & 8872 & 1297 & 203 & 44 & 7  & 1 \\
5   & 296029198 & 35386325 & 4304568 & 534186 & 68239 & 8887 & 1188 & 172 & 19 & 3  & 1 \\
\rowcolor[HTML]{E8E8E8} 
6   & 288364738 & 34451448 & 4192449 & 522034 & 66343 & 8860 & 1169 & 166 & 29 & 6  & 1 \\
7   & 294642143 & 35177434 & 4275015 & 530500 & 67604 & 8935 & 1228 & 168 & 24 & 6  & 1 \\
\rowcolor[HTML]{E8E8E8} 
8   & 285494550 & 34054118 & 4132141 & 513234 & 65554 & 8650 & 1166 & 152 & 27 & 5  & 1 \\
9   & 286594727 & 34088658 & 4124488 & 508351 & 64462 & 8372 & 1148 & 169 & 30 & 7  & 1 \\
\rowcolor[HTML]{E8E8E8} 
10  & 274348420 & 32567912 & 3931836 & 484093 & 61559 & 7998 & 1120 & 152 & 26 & 3  & 1 \\
11  & 273777740 & 32426966 & 3899784 & 477812 & 60258 & 7824 & 1110 & 164 & 20 & 1  & 0 \\
\rowcolor[HTML]{E8E8E8} 
12  & 253311421 & 29839307 & 3566489 & 433427 & 53589 & 6795 & 820  & 108 & 12 & 1  & 0 \\
13  & 261679280 & 30866198 & 3694130 & 450241 & 55652 & 6897 & 898  & 124 & 24 & 5  & 1 \\
\rowcolor[HTML]{E8E8E8} 
14  & 245666156 & 28870503 & 3440583 & 416527 & 51179 & 6679 & 894  & 129 & 22 & 2  & 1 \\
15  & 251104118 & 29523322 & 3520513 & 426857 & 52579 & 6798 & 862  & 110 & 15 & 2  &   \\
\rowcolor[HTML]{E8E8E8} 
16  & 235222471 & 27532894 & 3263211 & 392940 & 48632 & 6059 & 786  & 93  & 9  & 0  &   \\
17  & 243217035 & 28529132 & 3394144 & 409862 & 50337 & 6288 & 822  & 101 & 18 & 2  &   \\
\rowcolor[HTML]{E8E8E8} 
18  & 228433005 & 26670900 & 3154649 & 378424 & 46025 & 5816 & 738  & 97  & 14 & 2  &   \\
19  & 238932915 & 27978916 & 3321718 & 399985 & 49124 & 6348 & 795  & 101 & 9  & 1  &   \\
\rowcolor[HTML]{E8E8E8} 
20  & 225357244 & 26282933 & 3105430 & 372688 & 45379 & 5720 & 683  & 92  & 20 & 0  &   \\
21  & 238581056 & 27934903 & 3314769 & 398874 & 49033 & 6267 & 801  & 106 & 10 & 3  &   \\
\rowcolor[HTML]{E8E8E8} 
22  & 227016320 & 26480081 & 3126879 & 374803 & 45568 & 5598 & 702  & 115 & 15 & 1  &   \\
23  & 231864937 & 27070736 & 3199074 & 383370 & 46714 & 5774 & 756  & 103 & 9  & 1  &   \\
\rowcolor[HTML]{E8E8E8} 
24  & 218674999 & 25405730 & 2986737 & 356172 & 43085 & 5357 & 638  & 78  & 10 & 2  &   \\
25  & 220654476 & 25636190 & 3013633 & 358464 & 43490 & 5245 & 656  & 63  & 13 & 0  &   \\
\rowcolor[HTML]{E8E8E8} 
26  & 207550224 & 23975565 & 2797734 & 329791 & 39477 & 4686 & 556  & 66  & 6  & 0  &   \\
27  & 208353637 & 24063968 & 2807640 & 330900 & 39448 & 4718 & 613  & 74  & 10 & 0  &   \\
\rowcolor[HTML]{E8E8E8} 
28  & 196716398 & 22593961 & 2621784 & 306740 & 36422 & 4337 & 502  & 49  & 6  & 2  &   \\
29  & 200688403 & 23081340 & 2678062 & 313512 & 37251 & 4541 & 539  & 79  & 9  & 1  &   \\
\rowcolor[HTML]{E8E8E8} 
30  & 192822269 & 22086684 & 2552125 & 297179 & 34594 & 4073 & 467  & 52  & 4  & 0  &   \\
31  & 197517587 & 22669608 & 2621551 & 305250 & 36005 & 4255 & 493  & 61  & 8  & 0  &   \\
\rowcolor[HTML]{E8E8E8} 
32  & 187932603 & 21464007 & 2468752 & 286987 & 33400 & 3951 & 494  & 60  & 4  & 0  &   \\
33  & 192596736 & 22046542 & 2542733 & 296132 & 34594 & 4077 & 458  & 48  & 1  & 0  &   \\
\rowcolor[HTML]{E8E8E8} 
34  & 183600374 & 20928566 & 2398187 & 275541 & 31768 & 3651 & 440  & 60  & 5  & 0  &   \\
35  & 186373340 & 21266926 & 2441600 & 282075 & 32839 & 3936 & 451  & 45  & 1  & 0  &   \\
\rowcolor[HTML]{E8E8E8} 
36  & 177654153 & 20160128 & 2302012 & 264274 & 30523 & 3503 & 392  & 45  & 6  & 1  &   \\
37  & 179653033 & 20422240 & 2334627 & 267943 & 30676 & 3620 & 403  & 36  & 4  & 0  &   \\
\rowcolor[HTML]{E8E8E8} 
38  & 172064515 & 19474489 & 2214595 & 252591 & 28745 & 3279 & 338  & 38  & 5  & 0  &   \\
39  & 172423860 & 19521029 & 2221306 & 253670 & 28972 & 3311 & 373  & 37  & 5  & 0  &   \\
\rowcolor[HTML]{E8E8E8} 
40  & 166104905 & 18724906 & 2117971 & 240611 & 27344 & 3091 & 371  & 43  & 3  & 0  &   \\
41  & 166323319 & 18749509 & 2123150 & 240448 & 27089 & 3053 & 338  & 47  & 4  & 0  &   \\
\rowcolor[HTML]{E8E8E8} 
42  & 161484723 & 18142330 & 2044048 & 230015 & 25900 & 2796 & 310  & 29  & 1  & 0  &   \\
43  & 161411134 & 18136861 & 2044972 & 230317 & 26167 & 2940 & 338  & 30  & 3  & 1  &   \\
\rowcolor[HTML]{E8E8E8} 
44  & 157504079 & 17645743 & 1979344 & 222051 & 24488 & 2675 & 286  & 27  & 3  &    &   \\
45  & 157289722 & 17615572 & 1977617 & 222128 & 24897 & 2697 & 278  & 27  & 1  &    &   \\
\rowcolor[HTML]{E8E8E8} 
46  & 154924957 & 17311332 & 1937595 & 216315 & 23849 & 2569 & 266  & 18  & 1  &    &   \\
47  & 154898752 & 17306039 & 1937480 & 216551 & 23822 & 2619 & 299  & 25  & 3  &    &   \\
\rowcolor[HTML]{E8E8E8} 
48  & 152317224 & 16983058 & 1896656 & 211348 & 23314 & 2505 & 236  & 34  & 3  &    &   \\
49  & 150644552 & 16780039 & 1867095 & 207637 & 22946 & 2471 & 238  & 21  & 3  &    &   \\
\rowcolor[HTML]{E8E8E8} 
50  & 147271752 & 16360151 & 1817438 & 200919 & 22107 & 2344 & 225  & 20  & 0  &    &   \\
51  & 144377149 & 16013063 & 1776781 & 195943 & 21361 & 2212 & 224  & 11  & 1  &    &   \\
\rowcolor[HTML]{E8E8E8} 
52  & 140821922 & 15572918 & 1723600 & 189431 & 20598 & 2164 & 207  & 23  & 0  &    &   \\
53  & 137514881 & 15179551 & 1672906 & 183860 & 20090 & 2121 & 213  & 23  & 1  &    &   \\
\rowcolor[HTML]{E8E8E8} 
54  & 134230989 & 14783335 & 1627558 & 177736 & 19311 & 2036 & 199  & 21  & 1  &    &   \\
55  & 131236967 & 14423710 & 1582500 & 172696 & 18457 & 1861 & 175  & 17  & 1  &    &   \\
\rowcolor[HTML]{E8E8E8} 
56  & 128906212 & 14137741 & 1546258 & 168321 & 17967 & 1829 & 186  & 13  & 1  &    &   \\
57  & 126912794 & 13892042 & 1514923 & 163999 & 17368 & 1781 & 178  & 14  & 0  &    &   \\
\rowcolor[HTML]{E8E8E8} 
58  & 125903469 & 13766302 & 1499470 & 161793 & 17046 & 1688 & 158  & 10  & 0  &    &   \\
59  & 125423037 & 13700814 & 1490810 & 160879 & 17254 & 1804 & 152  & 11  & 1  &    &   \\
\rowcolor[HTML]{E8E8E8} 
60  & 124678744 & 13607074 & 1477998 & 159281 & 16989 & 1674 & 156  & 22  & 2  &    &   \\ \bottomrule
\end{longtable}
\end{center}
\end{footnotesize}

\section{Sagemath Python code}\label{sec:Code}

Computations were carried out with \textsl{Sagemath} 7.2 \cite{SageMath} running on 8 hyperthreaded 3.6 GHz i7 CPU cores, and using 32 GB of RAM and about 500 GB of harddisk storage on a 64-bit Linux desktop computer. It took about 80 days of total walltime to complete all computations up to the order of $N=10^{12}$. Hardware resources were maximized by running the computations in 20 independent batches and utilizing a hand-tuned, load-balanced parallelization protocol. The code listing below is mostly self explanatory with comments and we provide detailed explanations where necessary, referring to line numbers.

(\ref{bignumpy}) The only non-standard package we used is called \textsl{bignumpy} \cite{Bignumpy} and it is crucial for seamlessly handling very large \textsl{numpy} arrays. Our computations heavily utilize \textsl{numpy} array operations but these become prohibitive using standard \textsl{numpy} when the size of a \textsl{numpy} array exhausts available memory. \textsl{Bignumpy} provides file backed \textsl{numpy} array objects for \textsl{Python} utilizing the mmap/shared memory feature of Unix. \textsl{Bignumpy} allows for streamlined array operations on arrays multiple times larger than available memory, with a negligible slowdown compared to system swap.

(\ref{parallel}) The \textsl{@parallel} decoration applied to the \textsl{calc\_norm00} function invokes the parallel interface, which means that the single set of $(N, M)$ function arguments will be replaced by a list of multiple $(N1, M1), (N2, M2),..., (Nk, Mk)$ arguments, see (\ref{defcalcnorm00}) and (\ref{arglist}). The parallel interface will then execute, simultaneously, multiple copies of \textsl{calc\_norm00} running on multiple ($ncpus=8$) CPU cores in parallel. The argument list is automatically divided up and distributed among the parallel processes, and in the end, the results are reassembled in a single list.

(\ref{inmost}) This is the innermost loop---the rate limiting computation of integer factorization. The  \textsl{factor} function (\ref{factor}) is a wrapper around the standard \textsl{PARI} factorization routine \cite{PARIlibrary}. Note that we did not use specialized factorization algorithms suitable for certain classes of numbers, because the computation of  $L_\infty (N)$ requires the factorization of every single number $1-N$. The number of iterations in the innermost loop should be hand tuned to achive optimum balance between computation and data manipulation (see more about this below). Also note that determining whether or not a number $N$ is prime can, of course, be calculated much faster than full factorization but \textsl{factor} has already been called on $N$.

(\ref{input_beg}-\ref{input_end})  The input parameters $N\_*$ allow flexibility to compute $L_\infty (N)$ depending on the hardware configuration. $N\_min$ and $N\_max$ define a particular segment, in our computations we used 20 segments each $5*10^{10}$ long to get to $10^{12}$. The workflow is organized in three nested loops. We already mentioned the innermost loop (\ref{inmost}), which includes multiple factorizations calculated in a single call to \textsl{calc\_norm00}. The inner loop (\ref{inner}) iterates multiple calls to \textsl{calc\_norm00} using the parallel interface as explained above (\ref{parallel}). Note that the argument list (\ref{arglist}) is a standard \textsl{Python} list, which has a significant memory footprint and, therefore, the balance between the chosen values of $N\_inmost\_loop$ and $N\_inner\_loop$ is crucial. i) Their product $N\_maxmem$ should be set such that the resulting \textsl{Python} list (\ref{arglist}) fits comfortably in memory. The associated work arrays (\ref{array_beg}-\ref{array_end}) are all \textsl{bignumpy} arrays and their size is not limiting. ii) $N\_inmost\_loop$ should be set $>>1$ to make sure that computation (factorization) dominates because the overhead of data manipulation associated with a single function call to \textsl{calc\_norm00} in the parallel environment is significant. In fact, the maximum overall speedup using 8 CPU cores was limited to less than five fold.

(\ref{L00_array}) The core data set at the heart of our computations is comprised of the $L_\infty (N)$ values at prime ``stops'' along the number trail. The associated \textsl{bignumpy} array can be generated piece wise by concatenating consecuitve sub-arrays generated in a succession of batch calculations as noted above. Since the file associated with a \textsl{bignumpy} array is the exact binary copy of the array's memory image, these files can readily be concatenated using the UNIX \textsl{cat} command.

(\ref{hopseq}, \ref{cumsum}, \ref{prn_hopseq}, \ref{prn_cumsum})  $L_\infty (N)$ is computed as the cumulative sum of the sequence of ``hops'' between consecutive numbers along the number trail and, therefore, every new batch computation needs two data points from the previous calculation to start with. One is the length of the last hop and the other is the current value of $L_\infty (N)$.

(\ref{outer_beg}-\ref{outer_end}) The cumulative summation is carried out in the outer loop utilizing a number of \textsl{numpy} operations. i) First, the sequence of hops is computed for a continuous segment $N_{beg}-N_{end}$ by taking the maximum of every two adjacent infinity norm values (Chebyshev contour indices), and the values are stored in \textsl{Hop\_Sequence\_Arr} (\ref{np_max}). ii) The cumulative sum is then computed in two steps (\ref{cumsum2}, \ref{+=cumsum}). (The current values of \textsl{last\_hop\_seq} and \textsl{cumsum} are saved for the next segment (\ref{hopseq2}, \ref{hopseq3}, \ref{cumsum3}), see previous paragraph.) iii) Finally, \textsl{Prime\_bIndex\_Arr} is utilized as a binary mask (\ref{mask}) to keep only the prime stops and store them in \textsl{Prime\_Stops\_onL00\_Arr} (\ref{prime_stops}).

(\ref{arglist}) Note that the output list generated from the return values of the parallel calculation (\ref{ret_lst}) is not guaranteed to preserve the order of the input list, and must be sorted.

(\ref{beg_cumsum}-\ref{end_cumsum}) Listing of the \textsl{last\_hop\_seq} and \textsl{cumsum} values printed at lines \ref{prn_hopseq}, \ref{prn_cumsum} after completion of each of the 20 segments of the master calculation.

(\ref{beg_diffs}-\ref{end_diffs}) This section of the code reads in the concatenated file \textsl{Prime\_Stops\_onL00\_\_N=1-1000000000000.mmap} holding the $L_\infty(p)$ values at the prime ``stops'' and computes the $\mathcal D^1$ and $\mathcal D^2$ differences.

(\ref{histplot}) The final section computes and plots the $\mathcal D^1$ and $\mathcal D^2$ histograms.

\lstset{
  language=Python,
  showstringspaces=false,
  keywordstyle=\color{black}\bfseries,
  commentstyle=\itshape,
  basicstyle={\ttfamily,\small},
  stringstyle=\slshape,
  numbers=left, numberstyle=\tiny,
  emph={find, calc_norm00, factor,prime_pi},
  emphstyle=\underline,
  breaklines,
  breakatwhitespace,
  breakindent=20pt,
  escapeinside={(*@}{@*)}
}
\begin{lstlisting}
import numpy as np
from bignumpy import bignumpy(*@\label{bignumpy}@*)
####################################################################

def find(name, path):
    for root, dirs, files in os.walk(path):
        if name in files:
            return os.path.join(root, name)
####################################################################

# Calc for N: Chebyshev contour index (norm00)
@parallel(p_iter='multiprocessing', ncpus=8)(*@\label{parallel}@*)
def calc_norm00(N,M):(*@\label{defcalcnorm00}@*)

    N += 1                # N is passed as a zero based array index
    norm00     = []
    is_a_prime = []
    if N==1:
        norm00     = [0]
        is_a_prime = [0]

    for i in xrange( max( 2,N),N+M):               # innermost loop(*@\label{inmost}@*)

        f = factor( i)(*@\label{factor}@*)
        tuples = [x for x   in f]
       #primes = [p for p,e in f]
        powers = [e for p,e in f]
        norm00.append( max( powers))               # infinity norm
        if len( tuples)==1 and tuples[0][1]==1:    # 'i' is a prime
            is_a_prime.append( 1)
        else:
            is_a_prime.append( 0)

    return norm00, is_a_prime(*@\label{ret_lst}@*)
####################################################################

%timeit
N_min         = 450000000001(*@\label{input_beg}@*)
N_max         = 500000000000
N_primes      = prime_pi( N_max) - prime_pi( N_min-1)
N_outer_loop  = 5*10^3
N_inner_loop  = 10^4
N_inmost_loop = 10^3
N_maxmem      = N_inner_loop * N_inmost_loop          # max array size stored in memory(*@\label{input_end}@*)
ext           = str(N_min)+"-"+str( N_max)+".mmap"    # file extension

# Chebyshev contour index array --create(*@\label{array_beg}@*)
Contour_Indx_mmap_fname = "Contour_Indx__N="+ext
Contour_Indx_Arr = bignumpy( Contour_Indx_mmap_fname, np.uint8, (N_maxmem,))

# Is it a prime? Binary index array --create
Prime_bIndex_mmap_fname = "Prime_bIndex__N="+ext
Prime_bIndex_Arr = bignumpy( Prime_bIndex_mmap_fname, np.uint8,  N_maxmem,))

# Hop sequence array --create
Hop_Sequence_mmap_fname = "Hop_Sequence__N="+ext
Hop_Sequence_Arr = bignumpy( Hop_Sequence_mmap_fname, np.uint8, (N_maxmem,))

# Temp array for data type casting --create(*@\label{array_end}@*)
Temp1_Arr = bignumpy( 'Temp1.mmap', np.int64, (N_maxmem,))
Temp2_Arr = bignumpy( 'Temp2.mmap', np.int64, (N_maxmem,))

# L00 values at prime stops --create(*@\label{L00_array}@*)
Prime_Stops_onL00_mmap_fname = "Prime_Stops_onL00__N="+ext
Prime_Stops_onL00_Arr = bignumpy( Prime_Stops_onL00_mmap_fname, np.int64, (N_primes,))

incr = N_inner_loop * N_inmost_loop
stp  = N_inmost_loop
last_prime_stop = 0
last_hop_seq    = 11(*@\label{hopseq}@*)
cumsum          = 1029766280643(*@\label{cumsum}@*)
tim  = walltime()
for i in xrange( N_outer_loop):                           # OUTER LOOP(*@\label{outer_beg}@*)

    beg        = i *   incr + N_min-1     # array indx
    end        = beg + incr
    input_list = [(x,stp) for x in range(beg,end,stp)]    # inner loop(*@\label{inner}@*)
    tuples = sorted( list( calc_norm00( input_list)))(*@\label{arglist}@*)

    output_list = [ x[1][0] for x in tuples ]
    flat_list = [item for sublist in output_list for item in sublist]
    Contour_Indx_Arr = np.fromiter( flat_list, np.uint8)
    if beg==0:
        Hop_Sequence_Arr[0] = 0
    else:
        Hop_Sequence_Arr[0] = max( Contour_Indx_Arr[0], last_hop_seq)(*@\label{hopseq2}@*)
    last_hop_seq = Contour_Indx_Arr[-1](*@\label{hopseq3}@*)
    np.maximum( Contour_Indx_Arr[1:], Contour_Indx_Arr[:-1], out=Hop_Sequence_Arr[1:])(*@\label{np_max}@*)

    output_list = [ x[1][1] for x in tuples ]
    flat_list   = [item for sublist in output_list for item in sublist]
    Prime_bIndex_Arr = np.fromiter( flat_list, np.uint8)

    Temp1_Arr[:] = Hop_Sequence_Arr
    Temp1_Arr.cumsum( out=Temp2_Arr)(*@\label{cumsum2}@*)
    Temp2_Arr += cumsum(*@\label{+=cumsum}@*)
    cumsum = Temp2_Arr[-1](*@\label{cumsum3}@*)
    np.multiply( Prime_bIndex_Arr, Temp2_Arr, out=Temp1_Arr)  # binary mask(*@\label{mask}@*)
    count_nonzero = np.count_nonzero( Temp1_Arr)
    Prime_Stops_onL00_Arr[last_prime_stop:last_prime_stop+count_nonzero] = Temp1_Arr[np.nonzero( Temp1_Arr)](*@\label{prime_stops}@*)
    last_prime_stop += count_nonzero(*@\label{outer_end}@*)

    if( (i+1) % (N_outer_loop // 100)) == 0:
        print str(int(100*float(i+1)/float(N_outer_loop))).rjust(3)+'%', 'Wall time = ', str(walltime(tim)).rjust(20)


   #print Contour_Indx_Arr[...]
   #print Prime_bIndex_Arr[...]
   #print Hop_Sequence_Arr[...]

#print Prime_Stops_onL00_Arr[...]

print "last_hop_seq= ", last_hop_seq(*@\label{prn_hopseq}@*)
print "cumsum=       ", cumsum(*@\label{prn_cumsum}@*)
####################################################################

#                             last_hop_seq    cumsum(*@\label{beg_cumsum}@*)
#            1-  50000000000  11              114418475903
#  50000000001- 100000000000  11              228836951528
# 100000000001- 150000000000  11              343255427228
# 150000000001- 200000000000  12              457673901868
# 200000000001- 250000000000  12              572092378372
# 250000000001- 300000000000  11              686510853733
# 300000000001- 350000000000  11              800929329310
# 350000000001- 400000000000  13              915347805373
# 400000000001- 450000000000  11             1029766280643
# 450000000001- 500000000000  12             1144184756086
# 500000000001- 550000000000  11             1258603231525
# 550000000001- 600000000000  12             1373021707238
# 600000000001- 650000000000  11             1487440183820
# 650000000001- 700000000000  11             1601858659108
# 700000000001- 750000000000  12             1716277134939
# 750000000001- 800000000000  14             1830695610669
# 800000000001- 850000000000  11             1945114085159
# 850000000001- 900000000000  11             2059532560893
# 900000000001- 950000000000  11             2173951036235
# 950000000001-1000000000000  12             2288369511216(*@\label{end_cumsum}@*)
####################################################################


N_max = 10^12
N_primes = prime_pi( N_max)(*@\label{beg_diffs}@*)
ext = str( N_max)+".mmap"
dir = "/where/to/find/my_files/"

Prime_Stops_onL00_mmap_fname = "Prime_Stops_onL00__N=1-"+ext
Prime_Stops_onL00_Arr = bignumpy( find( Prime_Stops_onL00_mmap_fname, dir), np.int64, (N_primes,))        # L00 at prime stops --read

Prime_diff1_onL00_mmap_fname = "Prime_diff1_onL00__N=1-"+ext
Prime_diff1_onL00_Arr = bignumpy( Prime_diff1_onL00_mmap_fname, np.int16, ( N_primes-1,))                 # --create
np.subtract( Prime_Stops_onL00_Arr[1:], Prime_Stops_onL00_Arr[:-1], out=Prime_diff1_onL00_Arr)

Prime_diff2_onL00_mmap_fname = "Prime_diff2_onL00__N=1-"+ext
Prime_diff2_onL00_Arr = bignumpy( Prime_diff2_onL00_mmap_fname, np.int16, ( N_primes-2,))                 # --create
np.subtract( Prime_diff1_onL00_Arr[1:], Prime_diff1_onL00_Arr[:-1], out=Prime_diff2_onL00_Arr)(*@\label{end_diffs}@*)
####################################################################

# Plot data

import matplotlib.pyplot as plt(*@\label{histplot}@*)

E_max    = 12
N_max    = 10^12
N_primes = prime_pi( N_max)

ext = str(N_max)+".mmap"
dir = "/where/to/find/my_files/"

Prime_diff1_onL00_mmap_fname = "Prime_diff1_onL00__N=1-"+ext
Prime_diff1_onL00_Arr = bignumpy( find( Prime_diff1_onL00_mmap_fname, dir), np.int16, ( N_primes-1,))                # --read

Prime_diff2_onL00_mmap_fname = "Prime_diff2_onL00__N=1-"+ext
Prime_diff2_onL00_Arr = bignumpy( find( Prime_diff2_onL00_mmap_fname, dir), np.int16, ( N_primes-2,))                # --read

slice_diff1 = [prime_pi( 10^x)-1 for x in range( E_max+1)]
slice_diff2 = [prime_pi( 10^x)-2 for x in range( E_max+1)]

bin_edges_Prime_diff1_onL00 = np.arange( np.amin( Prime_diff1_onL00_Arr),np.amax( Prime_diff1_onL00_Arr)+2)            # '+2' to make sure rightmost bin is empty
bin_edges_Prime_diff2_onL00 = np.arange( np.amin( Prime_diff2_onL00_Arr),np.amax( Prime_diff2_onL00_Arr)+2)

# Histogram raw data:

slice_indx = 12  # >= 1
sdiff1 = slice_diff1[slice_indx]
sdiff2 = slice_diff2[slice_indx]

hist, bin_edges = np.histogram( Prime_diff1_onL00_Arr[:sdiff1], bins=bin_edges_Prime_diff1_onL00)
start = np.amin( np.nonzero(hist))
end   = np.amax( np.nonzero(hist))
np.savetxt( 'Prime_diff1_onL00_histogram__N=1-'+str( 10^slice_indx)+'.txt', hist[start:end+1],      fmt='%12d')
np.savetxt( 'Prime_diff1_onL00_bin_edges__N=1-'+str( 10^slice_indx)+'.txt', bin_edges[start:end+1], fmt='%12d')

hist, bin_edges = np.histogram( Prime_diff2_onL00_Arr[:sdiff2], bins=bin_edges_Prime_diff2_onL00)
start = np.amin( np.nonzero( hist))
end   = np.amax( np.nonzero( hist))
np.savetxt( 'Prime_diff2_onL00_histogram__N=1-'+str(10^slice_indx)+'.txt', hist[start:end+1],      fmt='%12d')
np.savetxt( 'Prime_diff2_onL00_bin_edges__N=1-'+str(10^slice_indx)+'.txt', bin_edges[start:end+1], fmt='%12d')

# Individual plots:

slice_indx = 12  # >= 1
my_color = (188/255, 27/255, 60/255)

plt.figure(1)
plt.hist( Prime_diff1_onL00_Arr[:slice_diff1[slice_indx]], bins=bin_edges_Prime_diff1_onL00, color=my_color, normed=True, align='left', histtype='stepfilled')
plt.ylabel( 'Frequency', fontsize=20)
plt.yticks( fontsize=16)
plt.xticks( fontsize=16)

plt.figure(2)
plt.hist( Prime_diff2_onL00_Arr[:slice_diff2[slice_indx]], bins=bin_edges_Prime_diff2_onL00, color=my_color, normed=True, align='left', histtype='stepfilled')
plt.ylabel( 'Frequency', fontsize=20)
plt.yticks( fontsize=16)
plt.xticks( fontsize=16)

# Multiple plots:

my_colors = [(0/255, 153/255, 153/255), (252/255, 126/255, 0/255), (251/255, 2/255, 1/255), (102/255, 205/255, 204/255), (255/255, 175/255, 103/255), (255/255, 102/255, 102/255), (0/255, 103/255, 102/255), (178/255, 86/255, 1/255), (175/255, 1/255, 2/255), (83/255, 165/255, 161/255), (246/255, 113/255, 78/255)]

plt.figure(3)
plt.hist( (Prime_diff1_onL00_Arr[:slice_diff1[2]], Prime_diff1_onL00_Arr[:slice_diff1[3]], Prime_diff1_onL00_Arr[:slice_diff1[4]], Prime_diff1_onL00_Arr[:slice_diff1[5]], Prime_diff1_onL00_Arr[:slice_diff1[6]], Prime_diff1_onL00_Arr[:slice_diff1[7]], Prime_diff1_onL00_Arr[:slice_diff1[8]], Prime_diff1_onL00_Arr[:slice_diff1[9]], Prime_diff1_onL00_Arr[:slice_diff1[10]], Prime_diff1_onL00_Arr[:slice_diff1[11]], Prime_diff1_onL00_Arr), bins=bin_edges_Prime_diff1_onL00, color=my_colors, normed=False, align='left', log=True, stacked=True, histtype='stepfilled')
plt.ylabel( 'log Frequency', fontsize=20)
plt.yticks( fontsize=16)
plt.xticks( fontsize=16)

plt.figure(4)
plt.hist( (Prime_diff2_onL00_Arr[:slice_diff2[2]], Prime_diff2_onL00_Arr[:slice_diff2[3]], Prime_diff2_onL00_Arr[:slice_diff2[4]], Prime_diff2_onL00_Arr[:slice_diff2[5]], Prime_diff2_onL00_Arr[:slice_diff2[6]], Prime_diff2_onL00_Arr[:slice_diff2[7]], Prime_diff2_onL00_Arr[:slice_diff2[8]], Prime_diff2_onL00_Arr[:slice_diff2[9]], Prime_diff2_onL00_Arr[:slice_diff2[10]], Prime_diff2_onL00_Arr[:slice_diff2[11]], Prime_diff2_onL00_Arr), bins=bin_edges_Prime_diff2_onL00, color=my_colors, normed=False, align='left', log=True, stacked=True, histtype='stepfilled')
plt.ylabel( 'log Frequency', fontsize=20)
plt.yticks( fontsize=16)
plt.xticks( fontsize=16)

plt.show()
####################################################################
\end{lstlisting}

\end{document}